\numberwithin{equation}{section}
\newtheorem{prop}{Proposition}[section]
\newtheorem{lem}[prop]{Lemma}
\newtheorem{defn}[prop]{Definition}
\newtheorem{thm}[prop]{Theorem}
\newtheorem{rem}[prop]{Remark}
\newtheorem{conj}[prop]{Conjecture}
\DeclareMathOperator{\Bl}{Bl}
\newcommand{\vext}{\mathfrak{X}_{S(\omega)}}
\newcommand{\ddb}{i\partial \bar\partial}
\newcommand{\BlX}{\widetilde{X}}
\newcommand{\Blball}{\widetilde{B}_{\varepsilon}^j}
\newcommand{\ball}{B^j_{\varepsilon}}
\title{Blowing up extremal Poincar\'e type manifolds}
\author{Lars Martin Sektnan}
\address{D\'epartement de math\'ematiques\\
  Universit\'e du Qu\'ebec \`{a} Montr\'eal\\
  Case postale 8888, succursale centre-ville \\
  Montr\'eal (Qu\'ebec)\\
  H3C 3P8 \\
  Canada}
\email{lars.sektnan@cirget.ca}
\begin{document}
\maketitle
\begin{abstract} We prove a version of the Arezzo-Pacard-Singer blow-up theorem in the setting of Poincar\'e type metrics. We apply this to give new examples of extremal Poincar\'e type metrics. A key feature is an additional obstruction which has no analogue in the compact case. This condition is conjecturally related to ensuring the metrics remain of Poincar\'e type.
\end{abstract}

\setcounter{tocdepth}{2}
\tableofcontents

\section{Introduction}

An important problem in K\"ahler geometry is to find whether or not an extremal K\"ahler metric exists in a given K\"ahler class. The Yau-Tian-Donaldson conjecture states that this should related to some notion of algebro-geometric stability.

A natural question is what happens in the unstable case, when no extremal metric exists on some compact polarised manifold $(V,L)$. In \cite{Don02}, \cite{donaldson05} and \cite{donaldson09}, Donaldson suggested the following conjectural picture. From the algebro-geometric point of view there should be an optimal destabiliser (see \cite{szekelyhidi08} for results in this direction in the toric case). If we for simplicity assume that the central fibre of the optimal destabiliser is a union of smooth irreducible components $X_i$ meeting in smooth divisors $D_i$, then the conjectural picture is that when taking a minimising sequence for the Calabi functional, suitable rescalings yield complete extremal metrics on $X_i \setminus D_i$. 

A large class of extremal metrics on $X \setminus D$ for a smooth, compact $X$ and smooth divisor $D$, come from metrics with cusp/Poincar\'e type singularities, whose study goes back at least as far as \cite{carlsongriffiths72}. For example, there are the K\"ahler-Einstein metrics of \cite{chengyau80}, \cite{rkobayashi84} and \cite{tianyau87} (see also \cite{schumacher98} and \cite{rochonzhang12} for further results on the asymptotics of these metrics), toric metrics on $\mathbb{P}^n \setminus \mathbb{P}^{n-1}$ of \cite{abreu01} and \cite{bryant01}, and metrics on the total space of certain $\mathbb{P}^1$-bundles using the momentum construction, see \cite{Szethesis}.
This type of asymptotics forms a natural candidate for the asymptotics of the extremal metrics conjectured to arise when the original compact manifold $V$ of the previous paragraph is unstable. 

In the present article, we will prove a perturbation result for extremal metrics on $X \setminus D$ with Poincar\'e type singularities, which yield further examples from the given known ones. However, our results will also show that there are obstructions to such perturbations novel to the Poincar\'e type case. The obstructions come from holomorphic vector fields on $D$.

There are already examples of complete extremal metrics on some $X \setminus D$ which are not of Poincar\'e type (and where no extremal Poincar\'e type metric can exist). See \cite{Szethesis} for examples which only occur at the boundary of the extremal cone, and \cite{apostolovauvraysektnan17} for examples which fill the whole K\"ahler cone. This leads to a YTD type conjecture for the existence of these type of metrics, first formulated by Sz\'ekelyhidi in \cite{Szethesis}, and further refined in \cite{apostolovauvraysektnan17}.
\begin{conj}[\cite{Szethesis}, \cite{apostolovauvraysektnan17}]\label{ptconj} Let $(X,L,D)$ be a triple consisting of a line bundle $L$ over a compact K\"ahler manifold $X$ with a smooth divisor $D$. Then there exists an extremal Poincar\'e type metric on $X \setminus D$ in the class $c_1 (L)$ if and only if 
\begin{itemize} \item the relative Donaldson-Futaki invariant of any test configuration for the triple $(X,L,D)$ is non-negative, with equality if and only if the test configuration is a product;
\item the Sz\'ekelyhidi numerical constraints is satisfied;
\item the restriction of the extremal vector field of $X$ for the class $c_1 (L)$ to $D$ equals the extremal vector field of $D$ for the class $c_1(L)_{|D}$.
\end{itemize}
\end{conj}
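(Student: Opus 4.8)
The plan is to establish the two implications of the conjectured equivalence separately, following the template of the Yau--Tian--Donaldson correspondence but carrying the analytic subtleties of Poincar\'e type metrics throughout; I should say at the outset that a complete proof of the conjecture is out of reach by these methods, and the realistic target (the one addressed by the blow-up theorem proved in this paper) is the "openness" half of the existence direction near a known solution, together with a verification that the three conditions appear exactly as predicted.

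For the direction ``existence $\Rightarrow$ the three conditions'', suppose $\omega \in c_1(L)$ is an extremal Poincar\'e type metric on $X \setminus D$. The third condition --- that the restriction to $D$ of the extremal vector field of $X$ equals the extremal vector field of $D$ --- should follow by restricting $\omega$ to a Poincar\'e-type normal cross-section near $D$ and checking that the induced metric on $D$ is forced to be extremal with the expected vector field; this is the boundary version of the obstruction isolated by the perturbation results here, where the obstructions are shown to come from holomorphic vector fields on $D$. The first condition --- non-negativity of the relative (modified) Donaldson--Futaki invariant of any test configuration for $(X,L,D)$, with equality only for products --- would be obtained by the usual argument identifying this invariant with the asymptotic slope of the relative Mabuchi K-energy along the associated geodesic ray, now in the Poincar\'e type setting; the genuinely new point is to control the divisorial boundary contributions at $D$, which requires the precise decay of $\omega$ and of the ray near the cusp so that the integration by parts producing the Futaki invariant is legitimate and no spurious terms supported on $D$ survive. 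The second (Sz\'ekelyhidi numerical) condition is a constraint on the data $(X,L,D)$ that should be forced simply by $c_1(L)$ containing a Poincar\'e type metric, proved by testing against degenerations to the normal bundle of $D$.

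For the harder direction ``the three conditions $\Rightarrow$ existence'' I would run a continuity method anchored at the known examples (the K\"ahler--Einstein metrics of \cite{chengyau80,rkobayashi84,tianyau87}, the toric metrics of \cite{abreu01,bryant01}, and the momentum-construction metrics of \cite{Szethesis}), of which this paper's blow-up theorem is one instance of openness. Openness at a given extremal Poincar\'e type metric is an implicit function theorem in weighted H\"older spaces modulo the cokernel of the linearised extremal operator, which by Poincar\'e-type Hodge theory should be identified with holomorphic vector fields tangent to $D$; the locus where that cokernel obstruction vanishes is precisely where the extremal-vector-field matching condition holds, mirroring the structure of the results here. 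Closedness is where I expect the real difficulty: one needs \emph{a priori} estimates for the fourth-order extremal equation on the noncompact manifold $X \setminus D$, uniform up to the cusp, combining a properness/coercivity statement for the modified Mabuchi functional equivalent to the stability hypotheses with a mechanism guaranteeing that a minimising sequence for the modified Calabi functional stays of Poincar\'e type rather than converging to one of the genuinely different complete extremal metrics exhibited in \cite{Szethesis,apostolovauvraysektnan17}. It is exactly at this last point that the matching condition is conjecturally decisive --- without it the minimising sequence should leak into the normal directions of $D$ --- and isolating this phenomenon rigorously, as opposed to the more tractable local openness statement, is the main obstacle.
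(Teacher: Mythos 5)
This statement is labeled as a \emph{conjecture} in the paper (attributed to \cite{Szethesis} and \cite{apostolovauvraysektnan17}), and the paper offers no proof of it --- indeed no proof of it is known. There is therefore no ``paper's own proof'' to compare your proposal against. You recognize this explicitly at the outset, which is exactly right: the honest scope of the paper is not a proof of Conjecture \ref{ptconj} but the blow-up perturbation result, Theorem \ref{mainblthm}, whose novel hypothesis \eqref{extremalvf} is read as corroborating the third bullet of the conjecture by exhibiting it as a necessary compatibility condition in a concrete construction.

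Within that correct framing, your sketch of a hypothetical continuity-method proof is reasonable speculation but goes well beyond anything the paper attempts, and a few of its assertions should be flagged as unjustified rather than merely hard. You claim the third condition ``should follow'' in the existence-implies-conditions direction by restricting to a cross-section; what the paper actually cites for the structure near $D$ is Auvray's asymptotic expansion (\cite[Thm.\ 4.8]{auvray14}, used in equation \eqref{metricasymptotics}), which says the induced metric $g_D$ is extremal on $D$, but deducing that its extremal vector field agrees with the restriction of the ambient extremal field is not automatic from this and is not argued in the paper --- it is precisely what remains conjectural. Your claim that the Donaldson--Futaki inequality ``would be obtained by the usual argument'' also elides the point that a slope formula for the relative Mabuchi energy along geodesic rays in the Poincar\'e type setting, with control of boundary terms at the cusp, has not been established. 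Finally, for the openness half you describe, the cokernel of the linearised operator is indeed tied to vector fields on $D$, but the paper identifies this through the explicit kernel/cokernel computation in Proposition \ref{kercokerlem} and the modified spaces of Section \ref{modifiedholder}, not through a ``Poincar\'e-type Hodge theory'' identification; your heuristic is in the right spirit but the mechanism is the additional cokernel elements $\mathcal{D}^*\mathcal{D}(\psi_i)$ coming from potentials on $D$ that do not lift to $X$ tangent to $D$. In summary: your proposal is an adequate account of why this statement cannot currently be proved and of how the paper's theorem relates to it, but several of the ``should follow'' steps are genuinely open, not routine.
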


The new obstruction to perturbations in the Poincar\'e type case that we find in this article can be thought of as giving further evidence to this conjecture, as it is the exactly the final condition in the above conjecture which is the novel condition required to obtain extremal Poincar\'e type metrics on the blow-up. On the other hand, the results suggests that one might expect more types of asymptotics than the Poincar\'e type asymptotics to occur for the complete metrics conjectured to arise when a compact K\"ahler manifold is unstable.  

\begin{rem} For the third point in Conjecture \ref{ptconj} to make sense, we need to know that there is an associated extremal vector field, as in the compact case of \cite{futakimabuchi95}.  That this holds follows from the results of Section \ref{analyseweights} and in particular Lemma \ref{extvfistangent}, combined with the argument of Lemma \ref{projectionchange}. This implies that there is a unique projection to the holomorphic vector fields on $X$ that are tangent to $D$, once a maximal torus has been chosen and we work with metrics invariant under this maximal torus.
\end{rem}

\subsection{Statement of results}

The main theorem is an extension of the blow-up theorems of Arezzo-Pacard \cite{arezzopacard06,arezzopacard09}, Arezzo-Pacard-Singer \cite{arezzopacardsinger11} and Sz\'ekelyhidi \cite{szekelyhidi12, szekelyhidi15b} to the Poincar\'e type case. Below we will let $G$ be the connected component of the identity of the group of automorphisms of $X$ that preserve, but not necessarily fix $D$. Let $T$ be a (compact) torus in $G$, chosen such that it contains the extremal vector field $\vext$ of the Poincar\'e type metric, and let $K$ be a maximal compact subgroup of $G$ containing $T$. Let $H$ be the centraliser of $T$ in $K$. We will let  $\mathfrak{t}, \mathfrak{k}$ and $\mathfrak{h}$ denote the Lie algebras of $T,K$ and $H$, respectively. 

We will let $\mu : X \setminus D \to \mathfrak{k}$ denote the normalised moment map, where we are using the natural inner-product to consider this as a map to $\mathfrak{k}$ instead of $\mathfrak{k}^*$. Note that if $p$ is a fixed point of $T$, then $\mu (p) \in \mathfrak{h}$.

Next, let $\mathfrak{X}_D$ denote the extremal vector field on $D$ for the class $\Omega_{|D}$ relative to a maximal torus of the automorphism group of $D$ which contains the automorphisms of $D$ coming from automorphisms in $T$. Finally we will let $\mathfrak{X}_{\varepsilon}$ denote the extremal vector field for the class $\Omega_{\varepsilon}$, defined by \eqref{epsilonclass} below, on the complement of $D$ in the blow-up, relative to the lift of the maximal torus $T$ to the blow-up. This vector field is a vector field on $X$ preserving $D$, and so can be restricted to $D$. 

\begin{thm}\label{mainblthm} Let $X$ be a compact complex manifold and suppose $\omega \in \Omega$ is an extremal Poincar\'e type K\"ahler metric on the complement of a smooth divisor $D$. Suppose further that $\omega$ is invariant under the action of the maximal compact subgroup $K$ in $G$, containing $\vext$, the extremal vector field of $\omega$. 

Let $p_1, \cdots, p_k \in X \setminus D$ and $a_1, \cdots, a_k > 0$ be chosen such that $\vext(p_i) = 0$ for all $i$ and 
\begin{align}\label{condbala} \sum_i a_i^{n-1} \mu (p_i) \in \mathfrak{t}.
\end{align}
Suppose also that  
\begin{align}\label{condgene} \mathfrak{t} +  \langle  \mu (p_1), \cdots, \mu(p_k) \rangle_{\mathbb{R}}  = \mathfrak{h}
\end{align} 
and that any vector field in $\mathfrak{h}$ vanishing at all the points $p_i$ necessarily is in $\mathfrak{t}$. Finally, suppose that 
\begin{align}\label{extremalvf} \mathfrak{X}_{\varepsilon| D } = \mathfrak{X}_D. 
\end{align} 
Then there is a constant $\varepsilon_0 >0$ such that for all $\varepsilon \in (0, \varepsilon_0)$ the blow-up of $X$ at the points $p_i$ admits an extremal Poincar\'e type K\"ahler metric with Poincar\'e type singularity along $\pi^{-1} (D) \cong D$ in the class
\begin{align}\label{epsilonclass} \Omega_{\varepsilon} = \pi^*(\Omega) - \varepsilon^2 \big(\sum_i a_i [E_i] \big),
\end{align}
where $\pi$ is the blow-down map and $E_i = \pi^{-1} (p_i)$ are the exceptional divisors. 
\end{thm}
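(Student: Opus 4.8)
The plan is to follow the gluing strategy of Arezzo--Pacard--Singer and Sz\'ekelyhidi, adapted to the Poincar\'e type setting. First, on the base $(X \setminus D, \omega)$ one constructs an approximate solution: near each $p_i$ one removes a small ball and glues in a scaled Burns--Simanca ALE metric on the blow-up of $\mathbb{C}^n$ at the origin, using a cutoff in the annular region, exactly as in the compact case, since the points $p_i$ lie in the interior $X \setminus D$ far from the divisor. This produces, for small $\varepsilon$, a K\"ahler metric $\omega_\varepsilon$ on $\Bl_{p_1,\dots,p_k}X \setminus D$ in the class $\Omega_\varepsilon$ whose scalar curvature (or rather the relevant extremal operator $S(\cdot) - $ holomorphy potential terms) is small in a suitable weighted norm. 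Because the gluing is localised away from $D$, the Poincar\'e type behaviour of $\omega_\varepsilon$ near $\pi^{-1}(D) \cong D$ is identical to that of $\omega$, so $\omega_\varepsilon$ is automatically a Poincar\'e type metric.

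Next, one sets up the nonlinear problem: seek $\varphi$ with $\omega_\varepsilon + \ddb \varphi$ extremal, i.e. solve an equation of the form $S(\omega_\varepsilon + \ddb\varphi) = \ell_{\varphi}$ where $\ell_\varphi$ is an affine function of the moment map associated with the extremal vector field $\mathfrak{X}_\varepsilon$. As usual this is reduced, via the implicit function theorem / Banach fixed point argument, to understanding the linearised operator --- the Lichnerowicz operator $\mathcal{L}_{\omega_\varepsilon}$ modulo the holomorphy potentials --- uniformly in $\varepsilon$. Here one must work in function spaces encoding both the ALE asymptotics at the exceptional divisors (weighted H\"older spaces with weights at the $E_i$, as in Arezzo--Pacard) and the Poincar\'e type asymptotics at $D$ (the weighted spaces from the theory of Poincar\'e type metrics, as developed by Auvray and used in the earlier sections on analysis of weights). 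The bulk of the work is to establish a uniform (in $\varepsilon$) right inverse for $\mathcal{L}_{\omega_\varepsilon}$ on the $K$-invariant, appropriately-weighted spaces, after quotienting by the finite-dimensional obstruction space coming from $\mathfrak{k}$ / the holomorphic vector fields; the balancing condition \eqref{condbala} and the genericity conditions \eqref{condgene} (and the "no extra vanishing vector fields" hypothesis) are precisely what make this quotiented operator surjective, by the same moment-map/linear-algebra argument as in Arezzo--Pacard--Singer.

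The new ingredient, and the point where the Poincar\'e type case genuinely differs, is the condition \eqref{extremalvf}, $\mathfrak{X}_{\varepsilon|D} = \mathfrak{X}_D$. In the compact case the kernel of the linearised operator (modulo holomorphy potentials) is exactly the holomorphic vector fields, and one can always absorb the obstruction by changing the extremal vector field. In the Poincar\'e type case, the analysis of Section \ref{analyseweights} and Lemma \ref{extvfistangent} show that bounded solutions of the Lichnerowicz equation correspond to holomorphic vector fields tangent to $D$, and crucially the asymptotic/boundary behaviour of the equation at $D$ produces an extra finite-dimensional cokernel governed by holomorphic vector fields on $D$: one cannot solve the extremal equation while keeping the metric of Poincar\'e type unless the candidate extremal vector field $\mathfrak{X}_\varepsilon$ restricts on $D$ to the extremal vector field $\mathfrak{X}_D$ of the divisor. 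So the argument is: the conditions \eqref{condbala}, \eqref{condgene} handle the interior obstruction coming from the blow-up points exactly as before, while \eqref{extremalvf} handles the boundary obstruction at $D$; together they give a uniformly bounded right inverse, and then a standard fixed-point argument on the nonlinear operator --- with the nonlinear terms controlled in the weighted norms as in the compact theory, using that the ALE gluing is concentrated in regions of size $O(\varepsilon)$ --- produces, for all $\varepsilon \in (0,\varepsilon_0)$, a genuine extremal metric $\omega_\varepsilon + \ddb\varphi_\varepsilon$ in $\Omega_\varepsilon$, still of Poincar\'e type since $\varphi_\varepsilon$ is small in a norm controlling the behaviour near $D$.

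The main obstacle I expect is twofold: establishing the uniform right inverse for $\mathcal{L}_{\omega_\varepsilon}$ with the \emph{two different} weighted behaviours simultaneously --- this requires a careful mapping-properties analysis of the Poincar\'e type Lichnerowicz operator (building on Auvray's Fredholm theory) combined with the usual blow-up degeneration analysis, and in particular a clean identification of the full cokernel as "$\mathfrak{k}$-obstruction at the $p_i$" $\oplus$ "$\mathfrak{h}_D$-type obstruction at $D$"; and secondly, verifying that the extra constraint \eqref{extremalvf} is exactly enough to kill the boundary part of the cokernel, which is where the interplay between $\mathfrak{X}_\varepsilon$, its dependence on $\varepsilon$ and the $a_i$, and the extremal vector field $\mathfrak{X}_D$ of $D$ must be pinned down (presumably via Lemma \ref{projectionchange} and a continuity-in-$\varepsilon$ argument showing $\mathfrak{X}_\varepsilon \to \vext$).
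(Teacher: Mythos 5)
Your proposal is in the right spirit, and it correctly identifies the genuinely new phenomenon (the extra finite-dimensional cokernel at $D$ and its link to condition \eqref{extremalvf} and Lemma \ref{projectionchange}). However, both the organizational route and, more importantly, the mechanism by which \eqref{extremalvf} enters, differ from the paper's argument in a way worth spelling out. First, the paper does not take the global fixed-point route you describe (construct $\omega_\varepsilon$ on the whole blow-up, find a uniform right inverse for $\mathcal{L}_{\omega_\varepsilon}$ modulo obstructions, contract); it instead follows Arezzo--Pacard--Singer's Cauchy-data-matching scheme: it builds families of solutions to a \emph{generalized} extremal equation on the two pieces $Y_\varepsilon$ and $\Blball$, parametrized by boundary data (Propositions~\ref{interiorsoln}, \ref{exteriorsoln}), and then matches these along $\partial B_1$ (Proposition~\ref{matchingprop}). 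Second, and this is the substantive point: your phrase ``together they give a uniformly bounded right inverse'' is not how \eqref{extremalvf} is used. The paper's right inverse exists \emph{regardless} of \eqref{extremalvf}; the exterior problem on $Y_\varepsilon$ is solved in the modified doubly-weighted spaces where one deliberately \emph{allows} the extra cokernel terms $\varrho(f)$ coming from holomorphic vector fields on $D$ not induced from $X$ (Proposition~\ref{dwtdexplicit}), so the resulting metric on the glued manifold a priori satisfies only $S(\omega_\varepsilon) = h + \varrho(f) + \cdots$, not the extremal equation. Condition \eqref{extremalvf} is then invoked \emph{a posteriori}, together with Lemma~\ref{projectionchange} (invariance of the $\overline{\mathfrak{h}}$-projection of $S$ under $C^{4,\alpha}_\eta$ potentials with $\eta<0$), to conclude that the coefficient $f$ must actually vanish, hence the metric is genuinely extremal. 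If you insist on quotienting by the full cokernel up front, you would need to show that the projection of the error term onto the $\varrho(f)$-part vanishes before you can invoke contraction, which would require you to re-derive the content of Lemma~\ref{projectionchange} anyway; the paper's ``solve the looser equation, then kill the extra terms'' ordering is cleaner. One further imprecision: the cokernel at $D$ is governed not by all holomorphic vector fields on $D$ but specifically by those \emph{not} induced by vector fields on $X$ tangent to $D$ (the piece $V_2$ in the decomposition \eqref{Ddecomp}); this distinction is exactly what makes the index count in Proposition~\ref{kercokerlem} close up.
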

Note that in the statement above, the conditions \eqref{condbala} and \eqref{condgene} are exactly analogous to the conditions in the compact case, whereas the condition \eqref{extremalvf} is novel to the Poincar\'e type setting. This extra condition arises from additional cokernel elements associated to the Lichnerowicz operator of a Poincar\'e type metric.

\subsection{Strategy for proving the main theorem}\label{strategysect} Good knowledge of the linear theory for the Lichnerowicz operator associated to a Poincar\'e type metric is crucial to prove our main theorem, Theorem \ref{mainblthm}. We will now describe the general setup and strategy for proving the theorem once this linear theory is in place. We take the same approach as in \cite{arezzopacardsinger11} for the compact case. 

Even though it is important to consider all points being blown up simultaneously in the construction, let us for simplicity consider the case when we are blowing up one point $p$ in $X$. The basic strategy is to 
\begin{itemize} \item consider the blow-up $\Bl_p X $ as being made up of two parts -- the complement $X \setminus B_{\varepsilon}$ of a ball in $X$, and $\widetilde{B_{\varepsilon}}$, the pre-image of a ball about the origin in $\Bl_0 \mathbb{C}^n$ ;
\item construct many extremal metrics on $X \setminus B_{\varepsilon}$ and $\widetilde{B_{\varepsilon}}$; 
\item show that under the assumptions of Theorem \ref{mainblthm} we can match two of these extremal metrics on each component along their common boundary to construct an extremal metric on the whole of $\Bl_p X$.
\end{itemize}

The constructions of the extremal metrics on $\widetilde{B_{\varepsilon}}$ is identical to the construction of \cite{arezzopacardsinger11} in the compact case. The new step is constructing extremal metrics of Poincar\'e type on the complement of a ball about the blown up point in $X \setminus D$, starting from the given one on the whole of $X \setminus D$. A new technical point for the Poincar\'e type case is that we will need to allow some metrics which also are not necessarily extremal in order to achieving the matching. We then return to consider whether or not the metrics actually are extremal at the very end, once we have a metric on the blow-up. There is a finite dimensional set of obstructions, which is precisely the condition \eqref{extremalvf} in Theorem \ref{mainblthm}.

\subsection{Organisation of the article} In Section \ref{background}, we recall some background on Poincar\'e type metrics. In particular, we discuss more precisely the asymptotics we assume. We introduce the function spaces that will be important for us and discuss a useful decomposition of these spaces, using a tubular neighbourhood discussion as in \cite{auvray13}. This will be important when we in Section \ref{fredholmpropssect} prove Theorem \ref{ptfredholmthm} which shows that except for a discrete set of weights, the Lichnerowicz operator associated to a Poincar\'e type metric is always a Fredholm operator on the weighted function spaces. 

In Section \ref{analyseweights} we explicitly find the kernel and cokernel of the Lichnerowicz operator for the weights relevant to proving Theorem \ref{mainblthm}. The key is the characterisation in Theorem \ref{ptfredholmthm} of the cokernel for a given weight in terms of the kernel for a different weight, together with an integration by parts lemma and a construction due to Auvray. In Section \ref{lintheory} we extend the linear theory of Sections \ref{fredholmpropssect} and \ref{analyseweights} to the \textit{doubly weighted spaces} that will be needed in the analysis.

Section \ref{nonlinearsect} is devoted to proving the blow-up theorem. It carries out the non-linear analysis needed to prove Theorem \ref{mainblthm}. This follows the strategy of Arezzo-Pacard closely as outlined in subsection \ref{strategysect} above, and is split into several steps -- getting better and better approximations to the extremal equation, before then showing an extremal metric can be found. The only fundamentally new step here compared to the compact case is to use the extra assumption \eqref{extremalvf} to ensure that the metrics we obtain in the end actually are extremal.

We end the article in Section \ref{egsect} by showing how the main theorem gives rise to new examples of compact smooth K\"ahler manifolds $X$ with a smooth divisor $D$ admitting an extremal Poincar\'e type metric on $X \setminus D$ in a K\"ahler class $\Omega$.

\subsection*{Acknowledgements:} This work was begun as a part of the author's PhD thesis at Imperial College London. I would like to thank my supervisor Simon Donaldson for his encouragement, support and insight. I gratefully acknowledge the support from the Simons Center for Geometry and Physics, Stony Brook University at which some of the research for this paper was performed. I am thankful to Hugues Auvray and G\'abor Sz\'ekelyhidi for inviting me to fruitful trips to Orsay and Notre Dame, respectively, which were very beneficial for completing this work. I also thank Ruadha\'i Dervan for helpful comments and discussions. Finally, I would like to thank my PhD examiners Mark Haskins and Michael Singer for many valuable comments on the version which was part of the author's PhD thesis. This work was supported by funding from the CIRGET.

\section{Background on Poincar\'e type metrics}\label{background}

\subsection{Definition of a metric of Poincar\'e type}
Consider the punctured unit (open) disk $B_1^* \subseteq \mathbb{C}$ with the metric
\begin{align}\label{localcusp} \frac{|dz|^2}{(|z|  \log |z|)^2}.
\end{align}
Here we use the notation $|dz|^2 = dx^2 + dy^2$, where $z = x + iy$. This is the standard cusp or Poincar\'e type metric on $B_1^*$. Note that if one lets $t = \log (- \log(|z|))$ and $\theta$ be the usual angular coordinate, this equals
\begin{align*}  \frac{|dz|^2}{(|z|  \log |z|)^2} = dt^2 + e^{-2t} d \theta^2.
\end{align*}
A computation shows that a K\"ahler potential for this metric is $4 \log (- \log ( |z|^2) )$.

In \cite{auvray17}, Auvray made a definition of a compact complex manifold admitting a metric with the asymptotics of the product of this metric with a smooth metric on $D$, near $D$. His definition was for a simple normal crossings divisor. However, in this article we will only be considering a smooth irreducible divisor, so we only recall the notion of such a metric in this context.

Given such a divisor $D$, one can define a model potential for a Poincar\'e type metric as follows. Pick a holomorphic section $\sigma \in H^0(X,\mathcal{O} ( D))$ such that $D$ is the zero set of $\sigma$. Fix a Hermitian metric $|\cdot|$ on $\mathcal{O} (D)$, which we assume satisfies $ | \sigma | \leq e^{-1}$. Let $\omega_0$ be a K\"ahler metric on the whole of the compact manifold $X$ and for a constant $\lambda>0$, let $f=\log( \lambda - \log(|\sigma|^2) )$. For sufficiently large $\lambda$, $\omega_{f} = \omega_0 - i \partial \overline{\partial} f$ is then a positive $(1,1)$-form on $X \setminus D$. Poincar\'e type metrics are defined to be metrics on $X \setminus D$ defined by a potential with similar asymptotics to $f$ near $D$.
\begin{defn}[{\cite[Def. 0.1]{auvray17},\cite[Def. 1.1]{auvray13}}]\label{poincaretypemetdefn} Let $X$ be a compact complex manifold and let $D$ be a smooth irreducible divisor in $X$. Let $\omega_0$ be a K\"ahler metric on $X$ in a class $\Omega \in H^2(X, \mathbb{R})$. A smooth, closed, real $(1,1)$ form on $X \setminus D$ is a Poincar\'e type K\"ahler metric if 

{
\itemize

\item $\omega$ is quasi-isometric to $\omega_{f}$. That is, there exists a $C$ such that
\begin{align*} C^{-1} \omega_{f} \leq \omega \leq C \omega_{f}.
\end{align*} 
}

Moreover, the class of $\omega$ is $\Omega$ if
\itemize
\item $\omega = \omega_0 + i \partial \overline{\partial} \varphi$ for a smooth function $\varphi$ on $X \setminus D$ with $| \nabla_{\omega_f}^j \varphi |$ bounded for all $j \geq 1$ and $\varphi = O(f)$.
\end{defn}

If $\omega$ is a Poincar\'e type metric in a class $\Omega$, then $\omega$ has finite volume. In fact, its volume equals that of any smooth metric in $\Omega$ on the whole of $X$. 

%%%%%%%%%%%%%%%%%%%%%%%%%%%%%%%%%%%%%%%%%%%%%%%%%%%%%%%%%%%%%%%%%%
%%%%%%%%%%%%%%%%%%%%%%%%%%%%%%%%%%%%%%%%%%%%%%%%%%%%%%%%%%%%%%%%%%
\subsection{Function spaces}\label{fnspaces}

We begin by defining the Sobolev spaces $W^{2,k} ( X \setminus D) = W^{2,k} ( X \setminus D, \omega)$ associated to a Poincar\'e type metric $\omega$ on $X \setminus D$. First, $L^2 (X \setminus D)$ is defined to be the completion of $C^{\infty}_c (X \setminus D)$ with respect to the norm
\begin{align*} \|  f \|_{L^2(X\setminus D)} = \big( \int_{X \setminus D}| f|^2 \omega^n \big)^{\frac{1}{2}}.
\end{align*}
For a function of class $C^k$, we then let $\| \nabla^k f \|_{L^2(X\setminus D)} $ denote the $L^2$-norm of $| \nabla^k f|$, where the $\nabla^k f$ denotes the higher covariant derivatives of $f$ and the pointwise norms are computed with respect to the norm induced by the metric $g$ on the tensor bundle in which $\nabla^k f$ lies. The $W^{2,k}$-Sobolev norm is then defined by 
\begin{align*} \|  f \|_{W^{2,k}(X\setminus D)} = \sum_{i=0}^k \| \nabla^k f \|_{L^2(X\setminus D)}.
\end{align*}
We let $W^{2,k}(X\setminus D)$ be the completion of $C^{\infty}_c (X \setminus D)$ under this norm. 

Next we define the H\"older spaces. These are defined used \textit{quasi-coordinates}, introduced in \cite{chengyau80}. Quasi-coordinates are immersions into $X \setminus D$ depending on a parameter $\varsigma \in (0,1)$ whose union covers $U \setminus D$ for some neigbourhood $U$ of $D$ in $X$. In the model case $B_1^* \times D$, we can cover the product of $D$ with a smaller punctured disk $B_{r}^* \times D$ by the following maps.

For $\varsigma \in (0,1)$ and some fixed $R \in (\frac{1}{2},1)$, let $\phi_{\varsigma} : B_R (0) \rightarrow B_1^* $ be given by 
\begin{align*} z \mapsto e^{\frac{1+\varsigma}{1-\varsigma} \frac{1+z}{1-z}} .
\end{align*}
As $\varsigma$ varies between $0$ and $1$ this covers a punctured ball $B_r^*$ around 0 in $B_1^*$ (see e.g. \cite[Sec. 2]{rkobayashi84}). Let $\omega_{PT}$ be the K\"ahler form associated to the standard cusp metric on $B_1^*$. One then has 
\begin{align*} \phi^*_{\varsigma} (\omega_{PT}) = \frac{i dz \wedge d \overline{z}}{(1-|z|^2)^2}
\end{align*}
which is quasi-isometric to the Euclidean metric independently of $\varsigma$. The $C^{k,\alpha}$-norm for a function $f$ on $B_1^*$ is then defined to be 
\begin{align*} \| f \|_{C^{k,\alpha} (B_1 \setminus B_{\frac{r}{2}})} +  \textnormal{sup}_{\varsigma \in (0,1)} \big( \| \phi_{\varsigma}^* f\|_{C^{k,\alpha}_{B_R (0)}} \big).
\end{align*}

In the global case in arbitrary dimension, one first fixes a finite set of charts $U_1, \cdots, U_d$ for $X$ such that the union of $U_i \cap D$ covers $D$ and $U_i \cap D = \{ z \in U_i : z_1 =0 \}$, i.e. $D$ is in $U_i$ given by the vanishing of the first coordinate function. By composing the coordinate map with the product of $\phi_{\varsigma}$ and the identity map on the last $n-1$ coordinates, this gives maps 
\begin{align*} \Phi_{\varsigma}^i : B_R(0) \times V_i \rightarrow X \setminus D
\end{align*}
for some open sets $V_i \subseteq \mathbb{C}^{n-1}$, whose union over all $\varsigma \in (0,1)$ and $i \in \{1, \cdots d \}$ covers $U \setminus D$ for some open set $U \subseteq X$ containing $D$. Letting $U_0$ be an open set with compact closure in $X\setminus D$ and which contains the complement of $U$, the $C^{k,\alpha}$-norm on $X \setminus D$ is then defined to be
\begin{align}\label{ckanormdefn} \| f \|_{C^{k,\alpha} (X \setminus D) } = \| f \|_{C^{k,\alpha}(U_0)} + \textnormal{max}_{i=1}^d \textnormal{sup}_{\varsigma \in (0,1)} \| (\Phi_{\varsigma}^i)^* f \|_{C^{k,\alpha} (B_R(0) \times V_i)}.
\end{align}

To obtain Fredholm properties of the relevant operators for the scalar curvature problems, we now add weights to our discussion, see \cite[Defn. 3.1]{auvray17}. 
\begin{defn} Let $\eta \in \mathbb{R}$. The $L^2$-norm with weight $\eta$ on $X \setminus D$ is
\begin{align}\label{globall2defn} \| f \|^2_{L_{\eta}^2(X \setminus D)} = \int_{X \setminus D} |f|^2 e^{-2 \eta t} \omega^n ,
\end{align}
where we recall that $\omega$ is our Poincar\'e type metric on $X\setminus D$. We define the weighted $W^{2,k}$-norm by
\begin{align}\label{globalw2kdefn} \| f \|^2_{W_{\eta}^{2,k}(X \setminus D)} = \sum_{i=0}^k \| \nabla^i f \|_{L^2_{\eta} (X \setminus D)}.
\end{align}
Finally, we define the $C^{k, \alpha}_{\eta}$-space to be 
\begin{align*} C^{k,\alpha}_{\eta} (X \setminus D) = e^{\eta t} C^{k,\alpha} (X \setminus D)
\end{align*}
equipped with the norm
\begin{align*} \| f \|_{C^{k,\alpha}_{\eta} (X \setminus D)} = \| e^{- \eta t} f\|_{C^{k,\alpha} (X \setminus D)}.
\end{align*}
\end{defn}

We will end the section by recalling a useful decomposition of functions on $X\setminus D$ that will be important for the estimates we will prove later. This relies on a tubular neighbourhood discussion of Auvray as in \cite{auvray17} and \cite[Sec. 3]{auvray13}. 

The exponential map obtained from a smooth metric $\omega_0$ defined on the whole of $X$ identifies a neighbourhood $\mathcal{V}$ of $D$ in the normal holomorphic bundle of $D$ with an initial tubular neighbourhood $\mathcal{N}$ of $D$ in $X$. The holomorphic normal bundle admits an $S^1$-action and so, by possibly reducing $\mathcal{V}$ to ensure it is preserved by the $S^1$-action, we can then endow $\mathcal{N}$ with an $S^1$-action, too. From the projection to $D$ in the holomorphic normal bundle of $D$, we similarly get a projection 
\begin{align}\label{normalproj1} \pi : \mathcal{N} \setminus D \rightarrow D ,
\end{align} which moreover is invariant under the $S^1$-action.

The function $\mathfrak{u} = \log(-\log(|\sigma|^2))$ can be perturbed to a function $t : X \setminus D \rightarrow \mathbb{R}$ which is $S^1$-invariant in $\mathcal{N} \setminus D$ and such that to any order, it is $O(e^{-\mathfrak{u}})$. Introducing a parameter $A$, we can then define $\mathcal{N}_A$ to be the union of $D$ with the points $x \in \mathcal{N} \setminus D$ for which $t(x) \geq A$. We will take $A$ to be fixed and write $\mathcal{N} = \mathcal{N}_A$, i.e. we chose $\mathcal{N}$ to be $\mathcal{N}_A$ from the beginning. One then obtains a map 
\begin{align}\label{normalproj2} \Pi = (\pi, t) : \mathcal{N} \setminus D \rightarrow D \times [A, \infty),
\end{align} 
which is an $S^1$-fibration.

Auvray further constructed a $1$-form $\vartheta$ associated to the $S^1$-action above. This has the key properties that in trivialising charts for $\mathcal{N}$ where $D$ is given by $z_1 = 0$, to any order it satisfies
\begin{align*} \vartheta = d \theta + O(1),
\end{align*}
 where $\theta$ is the the angular coordinate associated to $z_1$. $\vartheta$ also integrates to $2\pi$ on each fibre of the $S^1$-fibration \eqref{normalproj2}. The model Poincar\'e type metric $\tilde{\omega}$ in a class $[\omega_0]$ then has an expansion 
\begin{align*} \tilde{g} = dt^2 + e^{-2t} \vartheta^2 + \pi^* h_0 + O(e^{-t}),
\end{align*}
at any order. Here $h_0$ is the metric on $D$ associated to the K\"ahler form $\omega_0$ on $X$ restriced to $D$.

Still following \cite[Sec. 3]{auvray13}, we can use the above to decompose a function $f$ on $X \setminus D$ orthogonally into 
\begin{align}\label{fndecomp} f = f_0 + f^{\perp}, 
\end{align}
where $f_0$ is supported in $\mathcal{N}$ and is $S^1$-invariant, and where $f^{\perp}$ has average $0$ on each fibre of $\ref{normalproj2}$ near $D$. Thus in the tubular neighbourhood about $D$ we can then identify $f_0$ with a map $[A, \infty ) \times D \rightarrow \mathbb{R}$. 

We then have that $f \in C^{k,\alpha}_{\textnormal{loc}} (X \setminus D)$  is in $C^{k,\alpha} (X \setminus D)$ if and only if each of $f_0$ and $f^{\perp}$ are. Moreover, using the identification of $f_0$ with a function on the cylinder, $f_0 \in C^{k,\alpha} (X \setminus D)$ if and only if 
\begin{align*} f_0 \in C^{k,\alpha}([A, \infty) \times D),
\end{align*}
the cylindrical H\"older space. The $C^{k,\alpha}$-norm of function $\psi$ on the cylinder $[A, \infty) \times D$ is defined as
\begin{align*} \textnormal{sup}_{s \geq A+1} \| \psi \|_{C^{k,\alpha} ([s-1,s+1] \times D, dt^2 + g_D)}
\end{align*}
where $t$ is the coordinate on $[A,\infty)$ and $g_D$ is some metric on $D$. This equivalence between the cylindrical and Poincar\'e type H\"older norms for $S^1$-invariant functions is proved e.g. in \cite[Lemma 6.7]{sektnanthesis}.

\subsection{Basic properties}In this section we collect a couple of basic properties of the function spaces, that we will call upon later.

\begin{lem}\label{holderinsobolev} Let $\eta, \eta' \in \mathbb{R}$. Then $$C^{k,\alpha}_{\eta} \big( X \setminus D \big) \subseteq W^{2,k}_{\eta'}\big( X \setminus D \big)$$ if and only if $\eta < \eta' + \frac{1}{2}.$
\end{lem}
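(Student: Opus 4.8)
The statement is a weighted Sobolev embedding on the cylinder end, so the heart of the matter is a one-variable calculation. The first step is to use the orthogonal decomposition $f = f_0 + f^\perp$ of \eqref{fndecomp} together with the norm equivalences recalled in Section \ref{fnspaces}: both the $C^{k,\alpha}_\eta$-norm and the $W^{2,k}_{\eta'}$-norm split (up to equivalence) into a piece on the compact core $U_0$, where there is nothing to prove since all weighted norms are equivalent to the unweighted ones there, and a piece on the tubular neighbourhood $\mathcal{N}\setminus D$, which under $\Pi = (\pi,t)$ is modelled on the cylinder $[A,\infty)\times D$ with metric $dt^2 + e^{-2t}\vartheta^2 + \pi^*h_0 + O(e^{-t})$. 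So the problem reduces to comparing $\| e^{-\eta t} f\|_{C^{k,\alpha}}$ and $\| e^{-\eta' t} f\|_{W^{2,k}}$ for functions on this end.

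The key point is the volume form: since $\omega^n$ is comparable to $e^{-2t}\, dt\, \vartheta\, \mathrm{vol}_{h_0}$ on the end, we have, for any $\beta$,
\begin{align*}
\int_{\mathcal{N}\setminus D} e^{2\beta t}\,\omega^n \;\sim\; \int_A^\infty e^{(2\beta - 2)t}\,dt,
\end{align*}
which is finite precisely when $\beta < 1$. Now if $f \in C^{k,\alpha}_\eta$ then $|f| \le C e^{\eta t}$ and, more importantly, $|\nabla^i f| \le C e^{\eta t}$ for $i \le k$ as well (covariant derivatives in the Poincar\'e metric of a function growing like $e^{\eta t}$ still grow like $e^{\eta t}$, because the cylindrical derivatives $\partial_t$, and the rescaled angular/horizontal derivatives, are exactly the bounded frame for the Poincar\'e metric). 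Hence $e^{-2\eta' t}|\nabla^i f|^2 \le C e^{(2\eta - 2\eta')t}$, and
\begin{align*}
\| f\|^2_{W^{2,k}_{\eta'}} \;\le\; C + C\int_A^\infty e^{(2\eta - 2\eta')t} e^{-2t}\,dt,
\end{align*}
which converges iff $2\eta - 2\eta' - 2 < 0$, i.e. $\eta < \eta' + \tfrac12$. This gives the "if" direction.

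For the "only if" direction I would exhibit an explicit function witnessing failure at the threshold: take $f = \chi(x)\, e^{\eta t}$ where $\chi$ is a cutoff equal to $1$ on the end and supported in $\mathcal{N}$, and (since $t$ is $S^1$-invariant on $\mathcal{N}$) this is an $S^1$-invariant function identified with $e^{\eta t}$ on $[A,\infty)$. Then $\|f\|_{C^{k,\alpha}_\eta} < \infty$ by construction, while
\begin{align*}
\|f\|^2_{L^2_{\eta'}} \;\sim\; \int_A^\infty e^{2\eta t} e^{-2\eta' t} e^{-2t}\,dt \;=\; \int_A^\infty e^{(2\eta - 2\eta' - 2)t}\,dt,
\end{align*}
which diverges as soon as $\eta \ge \eta' + \tfrac12$. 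Since $L^2_{\eta'} \supseteq W^{2,k}_{\eta'}$, this $f$ is not in $W^{2,k}_{\eta'}$, establishing the contrapositive. The main obstacle — really the only non-bookkeeping point — is justifying cleanly that passing to covariant derivatives does not change the exponential weight: one must check that the Christoffel symbols of the model metric $dt^2 + e^{-2t}\vartheta^2 + \pi^*h_0$ (and the $O(e^{-t})$ corrections, controlled by the asymptotics recalled above) are bounded in the Poincar\'e frame, so that $|\nabla^i f| \lesssim \sum_{j\le i}|\nabla^j_{\mathrm{cyl}} f|$ in the appropriate frames; this is exactly the content of the cylindrical-vs-Poincar\'e H\"older equivalence cited from \cite[Lemma 6.7]{sektnanthesis} and \cite[Sec. 3]{auvray13}, extended from $C^{k,\alpha}$ to the $L^2$-based norm, which is routine. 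Everything else is the elementary integral $\int_A^\infty e^{ct}\,dt < \infty \iff c < 0$.
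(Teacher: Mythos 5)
Your approach is the same as the paper's --- reduce to the cylinder end, use the pointwise bound $|\nabla^i f| \lesssim e^{\eta t}$ for $f \in C^{k,\alpha}_\eta$, integrate against the volume form, and exhibit $e^{\eta t}$ as a witness for the ``only if'' direction. However, you have the volume form wrong, and then a compensating arithmetic slip. The metric $g = dt^2 + e^{-2t}\vartheta^2 + \pi^* h_0 + O(e^{-t})$ has Riemannian volume form
\begin{align*}
\sqrt{\det g}\; dt \wedge \vartheta \wedge \mathrm{vol}_{h_0} \sim e^{-t}\, dt \wedge \vartheta \wedge \mathrm{vol}_{h_0},
\end{align*}
because only the single $\vartheta^2$ entry contributes a factor $e^{-2t}$ and the square root takes it to $e^{-t}$. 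You can also see this directly from the two-dimensional cusp metric: its area form is $\frac{dr\, d\theta}{r(\log r)^2}$, and with $t = \log(-\log r)$ one has $dr = -e^t r\, dt$ and $(\log r)^2 = e^{2t}$, giving $e^{-t}\, dt\, d\theta$. You wrote $e^{-2t}$, which would give $\int e^{(2\eta - 2\eta' - 2)t}\, dt$, and the convergence condition for \emph{that} integral is $\eta < \eta' + 1$, not $\eta < \eta' + \tfrac12$. Your line ``$2\eta - 2\eta' - 2 < 0$, i.e.\ $\eta < \eta' + \tfrac12$'' is a separate algebra error (it should read $\eta < \eta' + 1$), and the two mistakes cancel to produce the claimed threshold. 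With the correct factor $e^{-t}$, the exponent is $2\eta - 2\eta' - 1$, and $2\eta - 2\eta' - 1 < 0 \iff \eta < \eta' + \tfrac12$ is the right equivalence; this is exactly what the paper computes. The remaining ingredients (propagating the weight through covariant derivatives via the quasi-coordinate frame, and the witness $e^{\eta t}$) match the paper's argument.
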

\begin{proof} Note that $e^{\eta t} \in C^{k,\alpha}_{\eta}$ and has finite $L^2_{\eta'}$ norm if and only if $2 \eta - 2 \eta' - 1 < 0$,  i.e. if and only if $\eta' > \eta - \frac{1}{2}$, showing one direction of the claim. Here we used that, in charts, the volume form of the Poincar\'e type metric is mutually bounded with the volume form $e^{-t} dt \wedge d\theta \wedge \omega_D$.

Conversely, for the case $\eta = 0$, this follows because for all $f \in C^{k,\alpha}(X \setminus D)$, $| \nabla^i f | $ is bounded and $X \setminus D$ has finite volume with respect to the volume form given by $e^{- 2 \eta' t} \omega^n$, where $\omega$ is a Poincar\'e type metric, if $\eta' > - \frac{1}{2}$. For the case of other values of $\eta$,  note that because $e^{\eta t} \in C^{k,\alpha}_{\eta}(X \setminus D)$ for all $k, \alpha$, we have that $|\nabla^i f| \in C^{k-i, \alpha}_{\eta} (X \setminus D)$ for all $i$. It then follows that $| e^{-\eta t} \nabla^i f|$ is bounded for every $i$, and so we can apply the previous argument.
\end{proof}

\begin{lem}\label{normcomparisonw2klem} For all $\delta, \varepsilon > 0$ there exists a compact subset $K \subseteq X \setminus D$ and $C>0$ such that 
\begin{align}\label{normcomparisonw2k} \| f \|_{W^{2,k+4}_{\eta+\delta}} \leq \varepsilon \| f \|_{W^{2,k+4}_{\eta}}  + C \| f \|_{L^2(K)} .
\end{align} 
\end{lem}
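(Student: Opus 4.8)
The inequality \eqref{normcomparisonw2k} is a standard ``weight-shifting absorbs into a compact remainder'' estimate, and I would prove it by localising to the cusp neighbourhood $\mathcal{N} \setminus D$, where the weight function $t$ is large. The point is that on the region where $t \geq A'$ for $A'$ large, the extra factor $e^{-\delta t}$ coming from comparing the $\eta+\delta$ and $\eta$ weights is uniformly small, while on the complementary region $\{t \le A'\}$ — which has compact closure in $X \setminus D$ — the two weighted norms are mutually equivalent up to a constant depending on $A'$.

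\textbf{Main steps.} First I would choose $A'$ depending on $\delta$ and $\varepsilon$: precisely, pick $A'$ so large that $e^{-\delta A'} \le \varepsilon$. Write $\chi$ for a smooth cutoff that is $1$ on $\{t \ge A'+1\} \cap \mathcal{N}$ and $0$ outside $\{t \ge A'\} \cap \mathcal{N}$, with all $\omega$-covariant derivatives of $\chi$ bounded independently of $A'$ (this uses that $t$ has bounded derivatives of all orders with respect to the Poincar\'e type metric, as recorded in Section \ref{background}). Then split $f = \chi f + (1-\chi)f$ and estimate the two pieces separately in $W^{2,k+4}_{\eta+\delta}$. For $\chi f$: on its support $e^{-2(\eta+\delta)t} = e^{-2\delta t} e^{-2\eta t} \le e^{-2\delta A'} e^{-2\eta t} \le \varepsilon^2 e^{-2\eta t}$ pointwise, and since multiplication by $\chi$ is bounded on the (unweighted, hence every weighted) Sobolev space uniformly, one gets $\|\chi f\|_{W^{2,k+4}_{\eta+\delta}} \le C_0\, e^{-\delta A'} \|f\|_{W^{2,k+4}_{\eta}} \le C_0\,\varepsilon\, \|f\|_{W^{2,k+4}_{\eta}}$; absorbing $C_0$ by a harmless rescaling of $\varepsilon$ at the start gives the first term on the right of \eqref{normcomparisonw2k}. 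For $(1-\chi)f$: its support is contained in a fixed compact set $K' = \mathrm{supp}(1-\chi) \subseteq X \setminus D$ (for the chosen $A'$), on which the weight $e^{-2(\eta+\delta)t}$ is bounded above and below, so $\|(1-\chi)f\|_{W^{2,k+4}_{\eta+\delta}} \le C_1 \|f\|_{W^{2,k+4}(K')}$ with $C_1 = C_1(A')$.

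\textbf{Upgrading to $L^2(K)$.} The remaining task is to replace the full Sobolev norm $\|f\|_{W^{2,k+4}(K')}$ on the compact set by the bare $L^2$-norm on a slightly larger compact set $K$. This is exactly interior elliptic regularity applied iteratively — more precisely, the Gagliardo--Nirenberg / interpolation inequality on a relatively compact domain, $\|f\|_{W^{2,k+4}(K')} \le \varepsilon' \|f\|_{W^{2,k+4}(K'')} + C(\varepsilon') \|f\|_{L^2(K)}$ for $K' \Subset K'' \Subset K$ — combined once more with the observation that on the compact set $K''$ the norm $\|\cdot\|_{W^{2,k+4}(K'')}$ is dominated by $\|\cdot\|_{W^{2,k+4}_{\eta}}$ up to a constant. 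One then re-absorbs the $\varepsilon'$-term into the global $W^{2,k+4}_{\eta}$ norm by choosing $\varepsilon'$ small. I would spell this out as: first produce \eqref{normcomparisonw2k} with $\|f\|_{W^{2,k+4}(K)}$ in place of $\|f\|_{L^2(K)}$ (immediate from the cutoff argument), then apply the compactly-supported interpolation inequality to trade derivatives for a smaller $\varepsilon$ at the cost of an $L^2$ term.

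\textbf{Expected obstacle.} None of the steps is deep; the only thing requiring a little care is keeping the constant in the $\chi f$-estimate \emph{independent} of $A'$ (hence of $\varepsilon$), so that making $A'$ large genuinely forces the coefficient of $\|f\|_{W^{2,k+4}_{\eta}}$ to be small rather than merely trading it against a blowing-up constant elsewhere. This is guaranteed by the uniform bounds on the derivatives of the cutoff $\chi$ and of $t$ in the Poincar\'e type metric, together with the fact that cutting off and the interpolation on compact sets produce constants depending only on the (fixed) geometry and on $A'$, never on $f$. So the genuine content is just the bookkeeping of which constants depend on which parameters.
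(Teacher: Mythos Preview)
Your localisation strategy --- choose $A'$ so that $e^{-\delta A'}\le\varepsilon$, split via a cutoff at $\{t\approx A'\}$, use smallness of $e^{-\delta t}$ on the far piece and equivalence of weighted norms on the compact piece --- is exactly the paper's approach (the paper restricts to $K_s=\{t\le s\}$ rather than using a smooth cutoff, but this is cosmetic). Up through the estimate $\|f\|_{W^{2,k+4}_{\eta+\delta}}\le\varepsilon\|f\|_{W^{2,k+4}_{\eta}}+C\|f\|_{W^{2,k+4}(K')}$ your argument is correct and matches the paper.

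The ``upgrading to $L^2(K)$'' step, however, does not go through. The interpolation inequality you invoke,
\[
\|f\|_{W^{2,k+4}(K')}\le\varepsilon'\|f\|_{W^{2,k+4}(K'')}+C(\varepsilon')\|f\|_{L^2(K)},
\]
is false: take $f$ smooth, supported in $K'$, and highly oscillatory; then the two $W^{2,k+4}$-norms agree while $\|f\|_{L^2}$ is comparatively tiny. Gagliardo--Nirenberg lets you bound a \emph{strictly lower}-order norm by $\varepsilon'$ times the top-order norm plus $C(\varepsilon')\|f\|_{L^2}$, not the top-order norm by a small multiple of itself. Indeed the same example --- fix a compact $K_0$, take $\varepsilon$ smaller than $\inf_{K_0}e^{-\delta t}$, and let $f$ be supported in $K_0$ --- shows that \eqref{normcomparisonw2k} as literally written fails for all sufficiently small $\varepsilon$. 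The paper's own proof glosses over the very same point (its displayed bound $\|f\|_{W^{2,k+4}_{\eta+\delta}(K_s)}\le C\|f\|_{L^2(K_s)}$ is not correct either). What both arguments honestly establish is \eqref{normcomparisonw2k} with $\|f\|_{W^{2,k+4}(K)}$ in place of $\|f\|_{L^2(K)}$; this weaker form, or the variant with one fewer derivative on the left-hand side, is what the applications in Propositions~\ref{globalw2kestimate} and~\ref{ptholderreg} actually need, since there the compact term is absorbed against a strictly higher-order norm already present on the left, at which point genuine interpolation becomes available.
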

\begin{proof}
For a real number $s$, let $K_s = \{ x : t(x) \leq s \}$. Note that since they are compact subsets, on each $K_s$ all the different weighted norms are equivalent, with the constant of equivalency depending on $s, \delta$ and $\eta$. In particular, 
\begin{align}\label{equivalenceKs} \| f \|_{W^{2,k+4}_{\eta+\delta}(K_s)} \leq  C \|  f \|_{L^{2}(K_s)}
\end{align}
for some constant $C$ depending on the same parameters.

Note that $\| f \|_{W^{2,k+4}_{\eta+ 2 \delta}} = \| e^{-\frac{\delta}{2} t} f \|_{W^{2,k+4}_{\eta}}$. We then have that for any $s$, 
$$\| f \|_{W^{2,k+4}_{\eta+\delta}(X \setminus D)} = \| e^{-\frac{\delta}{2} t} f \|_{W^{2,k+4}_{\eta} (K_s^c)} + \| f \|_{W^{2,k+4}_{\eta+\delta}(K_s)}   .$$
Pick $s$ such that $e^{-\frac{\delta}{2} s} < \varepsilon $. Then as $\delta $ is positive, $e^{-\frac{\delta}{2} t} < \varepsilon$ on $K_s^c$ and so the above combined with equation \eqref{equivalenceKs} gives that 
\begin{align*}\| f \|_{W^{2,k+4}_{\eta+\delta}(X \setminus D)} < \varepsilon \| e^{\delta t} f \|_{W^{2,k+4}_{\eta} (K_s^c)} + C \| f \|_{L^2(K_s)} 
\end{align*}
from which \eqref{normcomparisonw2k} follows by picking $K = K_s$.
\end{proof}

\section{Fredholm properties of the Lichnerowicz operator}\label{fredholmpropssect}

The goal of this section is to prove the following theorem on the Fredholm properties of the Lichnerowicz operator, under a stronger assumption on the asymptotics of a Poincar\'e type metric.
\begin{thm}\label{ptfredholmthm} Let $X$ be a K\"ahler manifold, let $D$ be a smooth irreducible divisor in $X$ and suppose $\omega$ is a Poincar\'e type metric on $X \setminus D$ that satisfies equation \eqref{metricasymptotics}. Suppose $\eta$ is not an indicial root for the Lichnerowicz operator $\mathcal{D}^* \mathcal{D} = \mathcal{D}^*_{\omega} \mathcal{D}_{\omega}$. Then $\mathcal{D}^* \mathcal{D}$ is a Fredholm operator $C^{k+4,\alpha}_{\eta} \rightarrow C^{k,\alpha}_{\eta}$. Moreover, we have that
\begin{align}\label{imageintermsofker} \textnormal{Im } \mathcal{D}^* \mathcal{D}_{C^{k+4, \alpha}_{\eta}} = ( \textnormal{Ker} (\mathcal{D}^*\mathcal{D}_{C^{k+4,\alpha}_{1 - \eta}} ) )^{\perp},
\end{align}
where $\perp$ denotes the $L^2$-inner product and subscripts denote the domains of the operators.
\end{thm}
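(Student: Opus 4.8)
The plan is to establish the Fredholm property through the standard machinery for geometric operators on manifolds with cylindrical/fibred ends, adapted to the Poincar\'e type setting where the end $\mathcal{N} \setminus D$ fibres over $D \times [A,\infty)$ via \eqref{normalproj2}. The key structural input is that the Lichnerowicz operator $\mathcal{D}^*\mathcal{D}$ is, to leading order near $D$, asymptotic to a translation-invariant operator on the cylinder $[A,\infty) \times D$ once one uses the expansion $\tilde g = dt^2 + e^{-2t}\vartheta^2 + \pi^* h_0 + O(e^{-t})$ and the assumed refinement \eqref{metricasymptotics}. First I would write $\mathcal{D}^*\mathcal{D}$ in the coordinates $(t, y)$ on the end, separate variables using the Fourier decomposition in the $S^1$-fibre direction (which matches the orthogonal decomposition $f = f_0 + f^{\perp}$ of \eqref{fndecomp}), and identify the model operator $\mathcal{D}^*_0\mathcal{D}_0$ obtained by dropping the $O(e^{-t})$ terms. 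On each Fourier mode this becomes an ODE in $t$ with coefficients independent of $t$ (after conjugating by the appropriate power of $e^t$ coming from the volume form $e^{-t}\,dt\wedge d\theta\wedge \omega_D$), whose indicial roots are by definition the values $\eta$ for which $e^{\eta t}$ times a mode on $D$ lies in the kernel of the model operator; the \emph{indicial roots} form a discrete set, which is the excluded set in the hypothesis.

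With this in place, the Fredholm statement follows the now-classical scheme (Lockhart--McOwen, and in the K\"ahler/Poincar\'e type context the work of Auvray \cite{auvray17,auvray13}): for $\eta$ not an indicial root one constructs a parametrix for $\mathcal{D}^*\mathcal{D}$ on $C^{k+4,\alpha}_\eta \to C^{k,\alpha}_\eta$ by gluing the interior elliptic parametrix (valid on the compact piece $U_0$) to an explicit right inverse of the model operator on the end built by integrating the ODE on each Fourier mode — here one crucially uses that $\eta$ avoids the indicial roots so that no resonance obstructs solving the model equation in the weighted space, and the $S^1$-modes with $|k|\geq 1$ contribute a uniformly invertible part (the $f^{\perp}$ piece is handled with room to spare). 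Patching these with a partition of unity yields a two-sided parametrix modulo operators that are compact $C^{k+4,\alpha}_\eta \to C^{k,\alpha}_\eta$, the compactness being exactly a weighted Rellich-type statement which one gets from Lemma \ref{normcomparisonw2klem} together with the H\"older-to-Sobolev inclusion of Lemma \ref{holderinsobolev} and the Arzel\`a--Ascoli argument in quasi-coordinates. This gives that $\mathcal{D}^*\mathcal{D}$ has finite-dimensional kernel and cokernel and closed range, i.e. is Fredholm.

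For the identification \eqref{imageintermsofker} of the image, the mechanism is duality: $\mathcal{D}^*\mathcal{D}$ is formally self-adjoint for the (unweighted) $L^2$ pairing, so the $L^2$-adjoint of $\mathcal{D}^*\mathcal{D}: C^{k+4,\alpha}_\eta \to C^{k,\alpha}_\eta$ is $\mathcal{D}^*\mathcal{D}$ acting on the dual weighted space, and one checks that the dual of $C^{k,\alpha}_\eta$ pairs (via $\int_{X\setminus D} \cdot\, \omega^n$, with volume form $\sim e^{-t}dt\wedge d\theta \wedge \omega_D$) with the weight $1-\eta$ rather than $-\eta$; the shift by $1$ is precisely the factor $e^{-t}$ in the volume form. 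Since the range is closed, $\textnormal{Im}\,\mathcal{D}^*\mathcal{D}_{C^{k+4,\alpha}_\eta}$ is the $L^2$-annihilator of the kernel of the adjoint, and the adjoint's kernel — a priori a space of distributions of weight $1-\eta$ — consists of smooth functions in $C^{k+4,\alpha}_{1-\eta}$ by elliptic regularity (interior regularity plus the model ODE analysis on the end forces the decay, since $1-\eta$ is also not an indicial root by the symmetry of the indicial set about $1/2$). This yields \eqref{imageintermsofker}.

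The main obstacle I expect is the construction and uniform control of the right inverse of the model operator on the fibred end, specifically verifying that the leading-order operator really is translation-invariant in $t$ to sufficient order after the conjugation absorbing the volume-form weight — this is where the stronger asymptotic assumption \eqref{metricasymptotics} is used, and where the $O(e^{-t})$ error terms in the metric expansion must be shown to produce genuinely compact (not merely bounded) perturbations in the weighted H\"older spaces. A secondary technical point is bookkeeping the symmetry $\eta \leftrightarrow 1-\eta$ of the indicial roots, which comes from the formal self-adjointness of $\mathcal{D}^*\mathcal{D}$ against the weight-$\tfrac12$-centred pairing; getting the center of symmetry right (at $1/2$, not $0$) is exactly the source of the $1-\eta$ in the statement and must be tracked carefully through the volume-form factor.
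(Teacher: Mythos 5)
Your overall architecture is the right one and matches the paper's at the conceptual level: decompose along the fibred end into the $S^1$-invariant part and the fibre-mean-zero part, treat the invariant part as a function on the cylinder $[A,\infty)\times D$ and invoke Lockhart--McOwen there, observe that the mean-zero part is uniformly invertible (the paper appeals to Biquard's estimates for the Laplacian rather than a mode-by-mode argument), and then use formal self-adjointness together with elliptic regularity to identify the cokernel with the kernel at the dual weight $1-\eta$ (the shift by $1$ coming from the $e^{-t}$ in the volume form, as you say).

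Where you diverge from the paper is in how Fredholmness itself is extracted. You propose to build a two-sided parametrix by explicitly integrating the model ODE on each Fourier mode and patching with the interior parametrix; the paper instead takes the Lockhart--McOwen a priori estimate as a black box, upgrades it to the global weighted estimate (Proposition \ref{globalw2kestimate}), and then gets finite-dimensional kernel and closed range by the standard contradiction argument, with the finite-dimensionality of the cokernel coming out of the duality identification \eqref{imageintermsofker} rather than from the parametrix. These are genuinely different routes to the same end; yours would re-derive what Lockhart--McOwen provide, whereas the paper cites them and avoids constructing a right inverse by hand.

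The one place I would push back is the duality step. You write that since the range is closed, the image is the $L^2$-annihilator of the kernel of the adjoint. Closedness of range gives that the image is the annihilator of $\mathrm{Ker}(L^*)$ inside the \emph{Banach-space dual} of $C^{k,\alpha}_\eta$, which is a space of distributions and is not naturally identified with any weighted function space; passing from that annihilator to the $L^2$-orthogonal complement of a space of actual functions requires an argument. The paper handles this carefully by routing everything through the weighted $L^2$ and Sobolev spaces $L^2_\delta$, $W^{2,4}_\delta$ (whose duals \emph{are} weighted $L^2$ spaces under the $L^2$ pairing), proving the image characterisation there first via Proposition \ref{ptholderreg}, and only then intersecting with $C^{k,\alpha}_\eta$ using the inclusion $C^{k,\alpha}_\eta \subseteq L^2_{\eta-\frac12+\varepsilon}$ of Lemma \ref{holderinsobolev}. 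Your sketch gestures at this with the phrase ``a priori a space of distributions of weight $1-\eta$'' and the observation about the symmetry of the indicial set about $\tfrac12$ (a correct and useful remark that the paper leaves implicit), but the argument as written conflates Banach-dual annihilators with $L^2$-orthogonal complements. This is a real gap that needs the Sobolev detour to close; once that detour is inserted, your proof becomes essentially the paper's.
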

The set of indicial roots will be defined below. It is a discrete subset of $\mathbb{R}$.

\subsection{Assumption on the metric} We first describe the asymptotics we will assume for the rest of the article. Recall from the previous section that the model Poincar\'e type metric $\tilde{\omega}$ in a class $[\omega_0]$ has an expansion 
\begin{align*} \tilde{g} = dt^2 + e^{-2t} \vartheta^2 + \pi^* h_0 + O(e^{-t}),
\end{align*}
where $t$ is a function invariant under the $S^1$-action on a tubular neighbourhood of $D$ and asymptotic to $\log(-\log(|\sigma|^2)$. We will consider the case when the metric $g$ associated to $\omega$ satisfies
\begin{align}\label{metricasymptotics} g = a(dt^2 + e^{-2t} \vartheta^2) + \pi^* g_D + O(e^{- \eta t}),
\end{align}
for some K\"ahler metric $g_D$ on $D$ and $a, \eta>0$, again up to any given order. Crucially for us, Auvray has in \cite[Thm. 4.8]{auvray14} shown that when $D$ is smooth, i.e. has no intersecting irreducible components, this expansion holds for extremal Poincar\'e type metrics, where $h$ is in fact an extremal metric on $D$. In particular, this holds in the case we are considering in this article. We need the parameter $a$, as this is changed when changing the K\"ahler form to $\omega + k \ddb \big( \log ( - \log ( | \sigma |^2 ) ) \big)$ for some constant $k$. 

\subsection{The key estimates} The goal of this section is to prove two key estimates for the proof of Theorem \ref{ptfredholmthm}, namely Propositions \ref{globalw2kestimate} and \ref{ptholderreg}. 

We start by proving the inequality \eqref{globallichineq}. We will adopt a strategy similar to that of Biquard in \cite{biquard97} for the Laplace operator, and use the decomposition of a function $f$ into an $S^1$-invariant part and an orthogonal part near $D$. The Lichnerowicz operator respects this decomposition. The reason for doing this is that then we can apply the theory of Lockhart-McOwen in \cite{lockhartmcowen85} to the $S^1$-invariant part of the function. We now recall the parts of the Lockhart-McOwen theory that will be relevant for us. 

The results of Lockhart-McOwen (which build on the results of \cite{agmonnirenberg63} and others) are in the setting of elliptic operators on manifolds with cylindrical ends. They apply in particular to elliptic operators $L$ of order $l$ on the model cylinder $C_Y = (0,\infty) \times Y$, where $Y$ is compact, that are translation invariant in the cylinder coordinate $t$. For such an operator,
\begin{align*}  \| f \|_{W^{2,k+l}_{\delta} (C_Y)} \leq c \| Lf \|_{W^{2,k}_{\delta} (C_Y)} 
\end{align*}
for all $\delta$ which are not an \textit{indicial root} of $L$. An indicial root is a $\delta \in \mathbb{R}$ such that there is a solution to the eigenvalue problem of the Fourier transform of $L$ of the form $$e^{i \delta t}p(t,x),$$ where, for each $x \in Y$, $p$ is a polynomial in $t$. The set of indicial roots is a discrete subset of $\mathbb{R}$. 

Using this inequality together with a partition of unity argument, Lockhart-McOwen then obtain that, for these weights, on the half-cylinder $H_Y = [1, \infty) \times Y$, we have that for every $b>1$, there is $c$ such that 
\begin{align}\label{lmineq}  \| f \|_{W^{2,k+l}_{\delta} (H_Y)} \leq c \big(\| Lf \|_{W^{2,k}_{\delta} (H_Y)} + \| f \|_{L^2([1,b]\times Y)} \big) .
\end{align}

For us, the relevant model operator is 
\begin{align}\label{modellich} f &\mapsto \frac{1}{2}\big(  \frac{\partial^2 }{\partial t^2} - \frac{\partial }{\partial t} \big)^2 (f) -  \big( \frac{\partial^2 }{\partial t^2} - \frac{\partial }{\partial t} \big) (\Delta_D f)  - \frac{1}{2} \big( \frac{\partial^2 }{\partial t^2} - \frac{\partial }{\partial t} \big) (f)  + \mathcal{D}^*_D \mathcal{D}_D f,
\end{align}
which is the Lichnerowicz operator associated to the standard cusp metric on $B_1^* \times D$ acting on $S^1$-invariant functions, where the $S^1$-action is the product of the standard action on $B_1^*$ and the trivial action on $D$.  This model operator corresponds to the case when $a=1$ in the assumption \eqref{metricasymptotics}. We will focus on this case, but the argument goes over to all other positive values of $a$. 

We let operators with a subscript $D$ denote operators on $D$ defined with respect to the metric on $D$ from equation \eqref{metricasymptotics}. In particular, note that this operator has coefficients that are translation invariant in $t$, and so the Lockhart-McOwen theory applies to this operator.

\begin{prop}\label{globalw2kestimate} Let $X$ be a compact K\"ahler manifold and let $D \subseteq X$ be a smooth divisor. Suppose that $\omega$ is a Poincar\'e type K\"ahler metric satisfying \eqref{metricasymptotics}. Suppose $\eta$ is not an indicial root for the operator \eqref{modellich}. Then there exists a compact subset $K \subseteq X \setminus D$ and a $c > 0$ such that
\begin{align}\label{globallichineq} \| f \|_{W^{2,k}_{\eta} (X \setminus D)} \leq c( \|\mathcal{D}^*_{\omega} \mathcal{D}_{\omega} f\|_{W^{2,k-4}_{\eta} (X \setminus D)} + \| f \|_{L^2 (K)}).
\end{align}
\end{prop}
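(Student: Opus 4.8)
\textbf{Proof strategy for Proposition \ref{globalw2kestimate}.}

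The plan is to glue together two kinds of estimates: an interior elliptic estimate on a large compact piece of $X \setminus D$, and the Lockhart--McOwen cylindrical estimate \eqref{lmineq} on a tubular neighbourhood of $D$, applied separately to the $S^1$-invariant and orthogonal parts of $f$ via the decomposition \eqref{fndecomp}. Fix $A' > A$ large and let $\mathcal{N}' = \mathcal{N}_{A'}$ be the correspondingly shrunk tubular neighbourhood, with $K_0 = X \setminus \mathcal{N}'$ a compact subset of $X \setminus D$ (here and below $K$ denotes various compact sets to be enlarged a bounded number of times). First I would record the standard interior estimate: since $\mathcal{D}^*_\omega \mathcal{D}_\omega$ is elliptic of order $4$ with smooth coefficients, for any compact $K_0 \Subset K_1 \Subset X\setminus D$ one has $\|f\|_{W^{2,k}(K_0)} \le c(\|\mathcal{D}^*_\omega\mathcal{D}_\omega f\|_{W^{2,k-4}(K_1)} + \|f\|_{L^2(K_1)})$, and on such compact sets the weighted and unweighted norms are uniformly equivalent, so this contributes the $\|f\|_{L^2(K)}$ term and a bounded multiple of $\|\mathcal{D}^*_\omega\mathcal{D}_\omega f\|_{W^{2,k-4}_\eta}$ on the right.

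Next, on the neighbourhood $\mathcal{N}' \setminus D$, I would use the decomposition $f = f_0 + f^\perp$ and estimate the two pieces separately, since $\mathcal{D}^*_\omega\mathcal{D}_\omega$ preserves this splitting (up to the $O(e^{-\eta t})$ error terms in \eqref{metricasymptotics}, which I address below). For $f_0$, the equivalence of the Poincar\'e-type and cylindrical H\"older/Sobolev norms for $S^1$-invariant functions (\cite[Lemma 6.7]{sektnanthesis} and the analogous $W^{2,k}$ statement) lets me transfer the problem to the cylinder $[A',\infty) \times D$. On that cylinder the leading part of $\mathcal{D}^*_\omega\mathcal{D}_\omega$ is precisely the translation-invariant model operator \eqref{modellich} (times $a$, but this does not change the set of weights for which the estimate holds), so the Lockhart--McOwen inequality \eqref{lmineq} applies: since $\eta$ is not an indicial root of \eqref{modellich}, $\|f_0\|_{W^{2,k}_\eta([A',\infty)\times D)} \le c(\|L f_0\|_{W^{2,k-4}_\eta} + \|f_0\|_{L^2([A',b]\times D)})$, and the last term is controlled by $\|f\|_{L^2(K)}$ on a slightly larger compact set. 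For $f^\perp$, the key gain is that functions with fibrewise average zero decay faster: the relevant model operator acting on the non-invariant modes has no indicial roots at all in the range we care about (the Fourier modes in the $\theta$-variable introduce a strictly positive lower bound, analogous to Biquard's treatment of the Laplacian in \cite{biquard97}), so one gets an estimate $\|f^\perp\|_{W^{2,k}_\eta(\mathcal{N}'\setminus D)} \le c\,\|\mathcal{D}^*_\omega\mathcal{D}_\omega f^\perp\|_{W^{2,k-4}_\eta(\mathcal{N}'\setminus D)}$ with no compact correction term needed on the tube.

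Finally I would assemble the pieces: adding the interior estimate on $K_0$ to the two tube estimates, using a partition of unity subordinate to $\{K_0, \mathcal{N}'\setminus D\}$ to control the error from differentiating cutoffs (these errors live on a fixed compact annular region and are absorbed into $\|f\|_{L^2(K)}$ after using interior elliptic regularity once more), yields \eqref{globallichineq}. The main obstacle I anticipate is the error terms: the metric $g$ only agrees with the model $a(dt^2 + e^{-2t}\vartheta^2) + \pi^* g_D$ up to $O(e^{-\eta t})$, so $\mathcal{D}^*_\omega\mathcal{D}_\omega$ differs from the model operator by a term whose coefficients decay like $e^{-\eta t}$ (with derivatives). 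One must check this perturbation is small enough to be absorbed — concretely, for $A'$ chosen large the perturbation term has small operator norm $\mathcal{N}'\setminus D \to$ the relevant spaces and can be moved to the left-hand side, at the cost of shrinking the neighbourhood and possibly using Lemma \ref{normcomparisonw2klem} to trade a small fraction of $\|f\|_{W^{2,k}_{\eta}}$ plus a compact $L^2$-term. Making this absorption argument precise, and ensuring the decomposition $f = f_0 + f^\perp$ interacts correctly with the cutoffs and the non-model part of the operator, is the technical heart of the proof; everything else is a standard patching of Lockhart--McOwen-type estimates.
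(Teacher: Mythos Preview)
Your overall architecture---interior estimate plus decomposition $f=f_0+f^\perp$ near $D$, with Lockhart--McOwen applied to the $S^1$-invariant part $f_0$ via the cylindrical identification---matches the paper exactly, and your handling of the perturbation from the model operator (absorb using Lemma~\ref{normcomparisonw2klem} after shrinking the tube) is also what the paper does.

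The genuine difference is in the treatment of $f^\perp$. You assert that the model Lichnerowicz operator on non-invariant modes has no indicial roots in the relevant range, giving a direct estimate without compact correction. This is morally right but would require carrying out a Biquard-style analysis for a fourth-order operator whose $\theta$-derivative terms carry coefficients like $e^{2t}$; the Lockhart--McOwen framework does not apply to this piece verbatim since the operator is not translation-invariant on the full tube. The paper avoids this entirely by a reduction to the bi-Laplacian: it observes that $\mathcal{D}^*_\omega\mathcal{D}_\omega - \Delta^2$ is third order, and invokes Auvray's result \cite[p.~9]{auvray13} that for fibrewise-mean-zero functions the lower-order derivatives gain a full unit of decay, yielding $\|(\Delta^2 - \mathcal{D}^*_\omega\mathcal{D}_\omega)f^\perp\|_{W^{2,k}_\eta} \le c'\|f^\perp\|_{W^{2,k+3}_{\eta+1}}$. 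Then Biquard's already-established estimate for $\Delta$ (hence $\Delta^2$) gives the bound for $f^\perp$, and the extra $\|f^\perp\|_{W^{2,k+3}_{\eta+1}}$ term is absorbed via Lemma~\ref{normcomparisonw2klem}. This buys you the result as a black-box consequence of Biquard plus Auvray, with no new indicial analysis; your route would work but requires filling in that analysis yourself.
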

\begin{proof} If $K' \subset K$ are compact subsets of $X \setminus D$ such that $K'$ is contained in the interior of $K$, then a similar inequality holds with $X \setminus D$ replaced by $K'$. Therefore we can restrict to considering functions supported in the tubular neighbourhood $\mathcal{N}$ about the divisor. Here we will use the decomposition in equation \eqref{fndecomp} of $f$ into an $S^1$-invariant part $f_0$ and a complementary part $f^{\perp}$.

For the $S^1$-invariant part $f_0$ of the function, we can then identify $f_0$ with a function on the cylinder $[A, \infty) \times D$. The Lichnerowicz operator $\mathcal{D}^* \mathcal{D}$ differs from the Lichnerowicz operator of the model cusp metric acting $S^1$-invariant functions on $B^* \times D$ by $O(e^{-t})$, due to the assumption \eqref{metricasymptotics}. Here $B$ is the ball of radius $e^{- e^{A}/2}$ about the origin in $\mathbb{C}$, the radius value corresponding to $t=A$. Lemma \ref{normcomparisonw2klem} implies that it suffices to prove that \eqref{globallichineq} holds for functions on $[A, \infty) \times D$ and the operator in equation \eqref{modellich}. But this is a constant coefficients operator in $t$ and so the Lockhart-McOwen theory applies. The inequality \eqref{lmineq} is then exactly what we require for the bound on $f_0$.

For the complementary part $f^{\perp}$, recall that its average on each fibre of the fibration $\Pi$ in equation \eqref{normalproj2} is $0$. This implies that there is a $c'>0$ such that 
\begin{align}\label{Lminussqlapl} \| \big( \Delta^2 - L \big) f^{\perp} \|_{W^{2,k}_{\eta}} \leq c' \| f^{\perp} \|_{W^{2,k+3}_{\eta +1 }}.
\end{align} 
For the two operators $\Delta^2$ and $L$ agree to leading order, and so is a bounded order three  operator. But in \cite[p.9]{auvray13} showed that for the component $f^{\perp}$ one gets higher decay than $\eta$ for the lower order derivatives. For example for the first derivative, this follows simply by integrating over a fibre, and using that the derivative have to vanish somewhere, since the mean is null on the fibre. Thus we get the estimate \eqref{Lminussqlapl}.  

To finish the proof, note that Biquard (\cite[Theorem 5.1 and Lemma 6.3]{biquard97}) showed a similar bound to the one we require for the Laplace operator. Thus it also holds for the square of the Laplacian, i.e. there exists a $C>0$ and compact subset $K \subseteq X \setminus D$ such that 
\begin{align*} \| f \|_{W^{2,k+4}_{\eta}} \leq C \big(\| \Delta^2 f \|_{W^{2,k}_{\eta}} + \| f \|_{L^2(K)} \big) .
\end{align*} 
Combining this with the estimate \eqref{Lminussqlapl}, we get that, by possibly increasing $C$, 
\begin{align*} \| f^{\perp} \|_{W^{2,k+4}_{\eta}} \leq C \big(\| L f^{\perp} \|_{W^{2,k}_{\eta}} + \| f^{\perp} \|_{W^{2,k+4}_{\eta +1 }} + \| f^{\perp} \|_{L^2(K)} \big) .
\end{align*}

By Lemma \ref{normcomparisonw2klem}, we can pick a compact subset $K' \subseteq X \setminus D$ and constant $C'>0$ such that 
\begin{align*} \| f^{\perp} \|_{W^{2,k+3}_{\eta+1}} &\leq \frac{1}{2C} \| f^{\perp} \|_{W^{2,k+3}_{\eta}}  + C' \| f^{\perp} \|_{L^2(K')} \\
&\leq \frac{1}{2C} \| f^{\perp} \|_{W^{2,k+4}_{\eta}}  + C' \| f^{\perp} \|_{L^2(K')}  .
\end{align*} 
Combining this with the above, we obtain the required estimate by possibly increasing $K$ and $C$.
\end{proof}

Next we will prove a regularity result. This follows \cite[Lem. 12.1.1]{pacardnotes}

\begin{prop}\label{ptholderreg} Suppose $f \in L^{2}_{\eta - \frac{1}{2}}$ and suppose that $\mathcal{D}^*_{\omega} \mathcal{D}_{\omega} f \in C^{k-4, \alpha}_{\eta}$ in the sense of distributions for a weight $\eta$. Then $f \in C^{k, \alpha}_{\eta}$. Moreover, there is a $c >0$ such that
\begin{align*} \| f \|_{C^{k+4, \alpha}_{\eta}} \leq c \big( \| \mathcal{D}_{\omega}^* \mathcal{D}_{\omega} f\|_{C^{k,\alpha}_{\eta}} + \| f \|_{L^{2}_{\eta - \frac{1}{2}}} \big).
\end{align*}
\end{prop}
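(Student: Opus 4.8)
The plan is to prove this as a standard weighted Schauder regularity estimate for the Lichnerowicz operator, following \cite[Lem. 12.1.1]{pacardnotes} but adapted to the Poincar\'e type setting via the quasi-coordinates of Section \ref{fnspaces}. The key observation is that $\mathcal{D}^*_\omega \mathcal{D}_\omega$ is a fourth-order elliptic operator which, when pulled back via the quasi-coordinate maps $\Phi^i_\varsigma$ (and via the interior charts on $U_0$), becomes a fourth-order elliptic operator with coefficients bounded uniformly in $\varsigma$ in $C^{k,\alpha}$ on fixed Euclidean balls. Indeed, this is precisely the mechanism by which the Poincar\'e type H\"older spaces are designed to work: the metric $\phi^*_\varsigma \omega_{PT}$ is uniformly quasi-isometric to the Euclidean metric, so the usual interior Schauder estimates on Euclidean balls apply with constants independent of $\varsigma$.

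First I would reduce to the weight $\eta = 0$ by conjugating: writing $f = e^{\eta t} g$, we have $g \in L^2_{-1/2}$, and $e^{-\eta t}\mathcal{D}^*_\omega\mathcal{D}_\omega(e^{\eta t} g) \in C^{k,\alpha}_0$ is an elliptic fourth-order operator of the same type (its principal part is unchanged and lower-order coefficients pick up bounded terms, since $t$ and its derivatives are controlled — $|\nabla^j t|$ is bounded). So it suffices to prove: if $g \in L^2_{-1/2}$ and $\mathcal{D}^*_\omega\mathcal{D}_\omega g \in C^{k,\alpha}_0$ (i.e. in $C^{k,\alpha}(X\setminus D)$) in the distributional sense, then $g \in C^{k+4,\alpha}(X\setminus D)$ with the corresponding estimate. (I note a harmless index discrepancy in the statement between $C^{k-4,\alpha}$ and $C^{k,\alpha}$ as hypothesis, consistent with the $C^{k+4,\alpha}$ conclusion; I would phrase the proof with the latter normalisation.)

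Second, the heart of the argument: on each interior ball of a quasi-coordinate chart, the standard interior Schauder estimate for the pulled-back operator gives
\begin{align*}
\|(\Phi^i_\varsigma)^* g\|_{C^{k+4,\alpha}(B_{R'})} \leq c\big( \|(\Phi^i_\varsigma)^*(\mathcal{D}^*_\omega\mathcal{D}_\omega g)\|_{C^{k,\alpha}(B_R)} + \|(\Phi^i_\varsigma)^* g\|_{L^\infty(B_R)}\big),
\end{align*}
with $c$ independent of $\varsigma$ because the coefficient bounds are. The first term on the right is controlled by $\|\mathcal{D}^*_\omega\mathcal{D}_\omega g\|_{C^{k,\alpha}(X\setminus D)}$ by definition of the norm \eqref{ckanormdefn}. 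Taking the supremum over $\varsigma$ and over the finitely many charts, plus the interior estimate on $U_0$, yields $g \in C^{k+4,\alpha}(X\setminus D)$ together with the bound with an error term $\|g\|_{L^\infty(X\setminus D)}$ rather than $\|g\|_{L^2_{-1/2}}$.

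The last step, which I expect to be the main obstacle, is replacing $\|g\|_{L^\infty}$ by the weaker norm $\|g\|_{L^2_{-1/2}}$ (equivalently $\|g\|_{L^2(X\setminus D, \omega^n)}$). On each quasi-coordinate ball one upgrades the $L^\infty$ term to an $L^2$ term by the standard trick of interpolation: $\|(\Phi^i_\varsigma)^* g\|_{L^\infty(B_R)} \leq \delta \|(\Phi^i_\varsigma)^* g\|_{C^{k+4,\alpha}(B_R)} + C_\delta \|(\Phi^i_\varsigma)^* g\|_{L^2(B_R)}$, absorb the first term into the left-hand side for $\delta$ small, and observe that $\|(\Phi^i_\varsigma)^* g\|_{L^2(B_R, \text{Euclidean})}$ is controlled by $\|g\|_{L^2(X\setminus D,\omega^n)}$ since $\phi^*_\varsigma\omega_{PT}$ is uniformly quasi-isometric to the Euclidean metric and the maps $\Phi^i_\varsigma$ have bounded overlap. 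One subtlety is that a priori we only know $g \in L^2_{-1/2}$, so $g$ need not be in $C^{k+4,\alpha}$ before we run the estimate; this is handled in the usual way by first applying interior elliptic regularity (Sobolev version) on slightly larger balls to bootstrap $g$ from $L^2_{\mathrm{loc}}$ to $C^{k+4,\alpha}_{\mathrm{loc}}$, so that all the quantities above are finite, and only then taking suprema to get the global weighted bound. Assembling these pieces over the finite cover, and recalling $\|g\|_{L^2(X\setminus D)} \asymp \|f\|_{L^2_{\eta-1/2}}$ after undoing the conjugation, gives the stated estimate.
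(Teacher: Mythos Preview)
Your approach via quasi-coordinates and conjugation is genuinely different from the paper's, which instead uses the decomposition $f=f_0+f^\perp$ into an $S^1$-invariant piece and its complement: for $f_0$ the paper passes to the half-cylinder $[\lambda,\infty)\times D$, applies Schauder on slabs $[s-1,s+1]\times D$, and handles the weight by comparing $e^{-\eta s}$ with $e^{-\eta t}$; for $f^\perp$ it reduces to the bi-Laplacian as in Proposition~\ref{globalw2kestimate}. Your route is more uniform and avoids the decomposition, which is attractive.

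There is, however, a real slip in your $L^2$ control. The quasi-coordinate immersions $\Phi^i_\varsigma$ do \emph{not} have bounded multiplicity: a Poincar\'e ball of radius $\sim 1$ centred at level $t$ wraps around the $S^1$-fibre roughly $e^t$ times, so the multiplicity of $\phi_\varsigma$ on its image is of order $\lambda=\tfrac{1+\varsigma}{1-\varsigma}\sim e^t$ and blows up as $\varsigma\to 1$. Consequently the bound you claim, $\|(\Phi^i_\varsigma)^*g\|_{L^2(B_R,\mathrm{Eucl})}\le C\|g\|_{L^2(X\setminus D,\omega^n)}$, is false in general. Relatedly, your final identification is off by a weight: for $g=e^{-\eta t}f$ one has $\|g\|_{L^2_0}=\|f\|_{L^2_\eta}$, not $\|f\|_{L^2_{\eta-1/2}}$.

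Fortunately these two errors cancel. Since $(\Phi^i_\varsigma)^*\omega^n$ is uniformly comparable to Euclidean volume, the area formula gives
\[
\|(\Phi^i_\varsigma)^*g\|_{L^2(B_R,\mathrm{Eucl})}^2 \;\le\; C\!\int_{\mathrm{image}} |g|^2\, N_\varsigma\,\omega^n,
\]
and because $N_\varsigma\le C e^t$ on the image, the right-hand side is bounded by $C\|g\|_{L^2_{-1/2}}^2=C\|f\|_{L^2_{\eta-1/2}}^2$, which is exactly the hypothesis. So your scheme goes through once you replace ``bounded overlap'' by this multiplicity-versus-weight count (or, equivalently, once you run the local Schauder step directly on cylinder slabs as the paper does, where no multiplicity arises). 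The paper's $S^1$-decomposition buys exactly this: it trades the immersion issue for an honest embedding onto the half-cylinder.
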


\begin{proof} From the usual elliptic theory, it follows that $f \in C^{k, \alpha}_{\textnormal{loc} }(X \setminus D)$, so we need to estimate the $C^{k,\alpha}_{\delta}$-norm. Using the Schauder estimates and that the weighted norms are equivalent to the unweighted norm on any compact subset $K$ of $X \setminus D$, we get immediately that there is a $c> 0$, depending on $K$, such that
\begin{align*}  \| f \|_{C^{k+4, \alpha}_{\eta}(K)}  \leq c \big( \| \mathcal{D}^* \mathcal{D} f\|_{C^{k,\alpha}_{\eta} (X \setminus D)} + \| f \|_{L^{2}_{\eta - \frac{1}{2}} (X \setminus D)} \big).
\end{align*}

Thus we have to show that the required bound holds near the divisor. As before, we divide the argument into one for the $S^1$-invariant part and one for the component $f^0$.

We begin with the case when $f$ is $S^1$-invariant with respect to the local $S^1$-action, and so can be identified with a function on a cylinder $[\lambda , + \infty ) \times D$ for some fixed $\lambda$. There is a $c >0$, independent of $s$, such that for all $s>\lambda +2$, we have
\begin{align*} \| f \|_{C^{k+4, \alpha} ([s-1, s+1] \times D )} \leq c( \| \mathcal{D}^* \mathcal{D} f\|_{ C^{k, \alpha}([s-2, s+2] \times D ) } + \| f \|_{L^{2} ([s-2,s+2] \times D)} ).
\end{align*}
Next, we multiply this by $e^{-\eta s}$. We have 
\begin{align*}\| e^{-\delta s }f \|^2_{L^{2} ([s-2,s+2] \times D)} 
&\leq \textnormal{max} \{ e^{-4 \delta}, e^{4 \delta} \} \| e^{- \delta t} f \|_{L^2 ([s-2,s+2] \times D) },
\end{align*}
since $e^{- 2 \delta t} \leq e^{- 2 \delta (s+2)}$ in $[s-2,s+2] \times D$ if $\delta <0$ and $e^{- 2 \delta t} \leq e^{- 2 \delta (s-2)}$ if $\delta >0$. Here the $L^2$-norm is computed with respect to the volume form $dt \wedge \omega_{D}^{n-1}$ and $\omega_{D}$ is the smooth metric $\omega_0$ restricted to $D$. Since the volume form of the Poincar\'e type metric $\omega$ is mutually bounded with $e^{-t} dt \wedge d\theta \wedge \omega_{D}^{n-1}$ and $f$ is $S^1$-invariant, it follows that 
\begin{align*}
\| e^{-\delta s }f \|^2_{L^{2} ([s-2,s+2] \times D)}& \leq \frac{\textnormal{max} \{ e^{-4 \delta}, e^{4 \delta} \}}{2\pi} \| f \|^2_{L^2_{\delta-\frac{1}{2}} (X \setminus D) }.
\end{align*}
 So by possibly increasing $c$ we get an inequality of the form
\begin{align*} \| e^{- \delta s} f \|_{C^{k, \alpha} ([s-1, s+1] \times D )} \leq c( \| e^{- \delta s} \mathcal{D}^* \mathcal{D} f\|_{ C^{k-4, \alpha}([s-2, s+2] \times D ) } + \| f \|_{L^{2}_{\delta - \frac{1}{2}} (X \setminus D)}).
\end{align*} 

Now, by a similar argument as above, one can show that $\| e^{- \delta s} f \|_{C^{k, \alpha} ([s-1, s+1] \times D )} $ is mutually bounded with $\| e^{- \delta t} f \|_{C^{k, \alpha} ([s-1, s+1] \times D )} $, independently of $s$. Similarily for $\| e^{- \delta s} \mathcal{D}^* \mathcal{D} f\|_{ C^{k-4, \alpha}([s-2, s+2] \times D ) }$. Thus there is a $c > 0$ such that for all $s > \lambda + 2$, we have
\begin{align*} \| e^{- \delta t} f \|_{C^{k, \alpha} ([s-1, s+1] \times D )} \leq c( \| e^{- \delta t} \mathcal{D}^* \mathcal{D} f\|_{ C^{k-4, \alpha}([s-2, s+2] \times D ) } + \| f \|_{L^{2}_{\delta  - \frac{1}{2}} (X \setminus D)}).
\end{align*} 
Thus by taking the supremum over all $s > \lambda + 2$, we get the required result.

For the remaining component $f^{\perp}$ the proof reduces, as in the proof of Proposition \ref{globalw2kestimate}, to the case of the Laplacian. Indeed, by the same argument, we get an estimate of the form
\begin{align*} \| f^{\perp} \|_{C^{k+4, \alpha}_{\eta}} \leq c \big( \| \mathcal{D}_{\omega}^* \mathcal{D}_{\omega} f^{\perp} \|_{C^{k,\alpha}_{\eta}} + \| f^{\perp} \|_{L^{2}(K)} + \| f^{\perp} \|_{L^{2}_{\eta - \frac{1}{2}}} \big)
\end{align*}
for some compact subset $K \subseteq X \setminus D$. But since the $(\eta - \frac{1}{2})$-weighted norm is equivalent to the unweighted norm on any compact subset of $X \setminus D$, the term $\| f^{\perp} \|_{L^{2}(K)}$ can be bounded by a constant multiple of $\| f^{\perp} \|_{L^{2}_{\eta - \frac{1}{2}}}$, which yields the required inequality.
\end{proof}

\subsection{Proof of Theorem \ref{ptfredholmthm}}\label{ptfredholmthmpf} We will now use the estimates of the previous section to prove Theorem \ref{ptfredholmthm}. The finite dimensionality of the kernel and the closedness of the image of $\mathcal{D}^* \mathcal{D}$ follows directly from Proposition \ref{globalw2kestimate} using standard contradiction arguments, see e.g. \cite[Ch. 9]{pacardnotes}. Note that due to the inclusion $W^{2,k}_{\eta} \subseteq W^{2,k}_{\eta'}$ whenever $\eta \leq \eta'$, the finite-dimensionality of the kernel of the Lichnerowicz operator holds for all weights, not just away from the indicial roots. The finite dimensionality of the cokernel (and hence the Fredholm property of the Lichnerowicz operator), will then follow from the characterisation in equation \eqref{imageintermsofker} which we prove below.

To show equation \eqref{imageintermsofker}, we use the regularity result, Proposition \ref{ptholderreg}. We first establish that for any weight $\delta$, the kernel of the adjoint of $\mathcal{D}^* \mathcal{D}$ on $(L^2_{\delta})^*$ can be identified with the kernel of $C^{4,\alpha}_{\frac{1}{2}-\delta}$. 

First note that $L^2_{\delta}(X\setminus D)^*$ can be identified with $L^2_{-\delta}$ by using the $L^2$-inner product. Also, the operator $\mathcal{D}^* \mathcal{D}$ is formally self-adjoint, hence if $f \in L^2_{-\delta}$ is in the kernel of the adjoint operator $ (L^2_{\delta})^* \rightarrow (W^{2,2}_{\delta})^*$, it solves $\mathcal{D}^* \mathcal{D} (f) = 0$ in the sense of distributions. By Proposition \ref{ptholderreg} and the fact that by Lemma \ref{holderinsobolev}, $C^{4,\alpha}_{\frac{1}{2} - \delta - \varepsilon} \subseteq L^2_{-\delta}$ if $\varepsilon > 0$, it follows that $f \in C^{4,\alpha}_{\frac{1}{2}-\delta -  \varepsilon}$ for any $\varepsilon > 0$.

Next, let $L_{\varepsilon}$ denote the operator $\mathcal{D}^* \mathcal{D}$ with domain $W^{2,4}_{\eta - \frac{1}{2}+ \varepsilon}$. Note that
\begin{align*} \textnormal{Im } L_{\varepsilon} = \big( \textnormal{Ker} (L_{\varepsilon}^*) \big)^*.
\end{align*}
The above paragraph applied to $\delta = \eta  - \frac{1}{2} + \varepsilon$ gives that $\textnormal{Ker} (L_{\varepsilon}^*)$ can be identified with $\textnormal{Ker } L_{C^{k+4}_{1- \eta - 2 \varepsilon}}$. Hence 
\begin{align*} \big( \textnormal{Ker} (L_{\varepsilon}^*) \big)^* = \big( \textnormal{Ker } L_{C^{k+4}_{1- \eta - 2 \varepsilon}} \big)^{\perp}.
\end{align*}
However, since $\eta$ is not an indicial root, $\textnormal{Ker } L_{C^{k+4}_{1- \eta - 2 \varepsilon}}$ does not change for $|\varepsilon |$ sufficiently small and so in particular
\begin{align*} \big( \textnormal{Ker} (L_{\varepsilon}^*) \big)^* = \big( \textnormal{Ker } L_{C^{k+4}_{1- \eta }} \big)^{\perp}.
\end{align*}

To complete the proof, note that again by Lemma \ref{holderinsobolev}, $C^{k,\alpha}_{\eta} \subseteq L^2_{\eta - \frac{1}{2} + \varepsilon}$ if and only if $\varepsilon > 0$. Hence by Proposition \ref{ptholderreg} and picking $\varepsilon > 0$ sufficently small in the above
\begin{align*} \textnormal{Im } L_{C^{k+4,\alpha}_{\eta}} &= \textnormal{Im }L_{\varepsilon} \cap C^{k,\alpha}_{\eta} \\
&= \big( \textnormal{Ker } L_{C^{k+4}_{1- \eta }} \big)^{\perp}.
\end{align*}
This completes the proof of Theorem \ref{ptfredholmthm}.

\section{Explicit analysis of the (co)-kernel for the relevant weights}\label{analyseweights}

It is only when $\eta < 0$ that all the elements of $C^{k,\alpha}_{\eta}$ can be potentials for a Poincar\'e type metric. We would therefore like to tell what the kernel and cokernel of the Lichnerowicz operator is for small negative weights. The first goal of this section is to prove such a characterisation. We will then make a slight adjustment to these spaces which are more suited for the perturbation problem that we want to solve.

\subsection{The standard spaces} 
We begin by describing the main result of this subsection, Proposition \ref{kercokerlem}. 

We will need the following definition. In the statement $\mathfrak{h}$ is the space of real holomorphic vector fields on $X$.
\begin{defn}[{\cite[Defn. 1.1]{auvray14}} ]\label{parallelvfsdefn} Let $\mathfrak{X} $ be a real holomorphic vector field on $X$. We say $\mathfrak{X}$ is \textnormal{tangent to $D$} if, when writing $\mathfrak{X}$ as the real part of $\sum_i f_i \frac{\partial}{\partial z_i}$ in coordinates near $D$ where $D$ is given by $z_1 = 0$, we have $f_1 = 0$ on $D$. We write $\mathfrak{h}_{//}^D $ for the subspace of $\mathfrak{h}$ of such vector fields. The space of potentials for vector fields in $\mathfrak{h}_{//}^D$ with zeros is denoted $\overline{\mathfrak{h}_{//}^D} $. 
\end{defn}

Vector fields tangent to $D$ as in Definition \ref{parallelvfsdefn} enter in our discussion of weighted spaces because of the following result of Auvray.
\begin{lem}[{\cite[Lem. 5.2]{auvray17} }]\label{parallelvfslem} Let $\mathfrak{X} \in \mathfrak{h}$ be a real holomorphic vector field on $X$. Then its $L^2$-norm $\| \mathfrak{X} \|_{L^2 (X \setminus D, \omega)}$ with respect to a Poincar\'e type K\"ahler metric $\omega$ on $X \setminus D$ is finite if and only if $\mathfrak{X} \in \mathfrak{h}_{//}^D$.
\end{lem}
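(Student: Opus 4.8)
The plan is to prove Lemma \ref{parallelvfslem} by reducing the question of $L^2$-integrability of a holomorphic vector field $\mathfrak{X}$ to a local computation in a tubular neighbourhood $\mathcal{N}$ of $D$, where the Poincar\'e type metric is controlled by the model asymptotics. Since $X\setminus D$ has finite volume and $\mathfrak{X}$ is smooth (in particular bounded) on $X$, the contribution to $\|\mathfrak{X}\|^2_{L^2(X\setminus D,\omega)}$ from the complement of $\mathcal{N}$ is automatically finite. Hence the entire question is local near $D$, and by quasi-isometry (the first bullet of Definition \ref{poincaretypemetdefn}) it suffices to estimate $\int_{\mathcal{N}\setminus D}|\mathfrak{X}|^2_{\omega_f}\,\omega_f^n$ with $\omega_f$ the model metric. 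I would work in a trivialising chart where $D = \{z_1 = 0\}$ and $\omega_f$ looks like the product of the one-dimensional cusp metric $\frac{|dz_1|^2}{(|z_1|\log|z_1|)^2}$ in the normal direction with a smooth metric in the remaining $n-1$ directions, up to the error terms recorded in the expansion $\tilde g = dt^2 + e^{-2t}\vartheta^2 + \pi^*h_0 + O(e^{-t})$.

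Next I would compute the pointwise norm. Write $\mathfrak{X} = \mathrm{Re}\big(f_1\frac{\partial}{\partial z_1} + \sum_{i\ge 2} f_i\frac{\partial}{\partial z_i}\big)$ near $D$ with the $f_i$ holomorphic and bounded up to $D$. In the model metric, $\big|\frac{\partial}{\partial z_1}\big|^2_{\omega_f}$ is comparable to $(|z_1|\log|z_1|)^2$, which tends to $0$ as $z_1\to 0$, while $\big|\frac{\partial}{\partial z_i}\big|^2_{\omega_f}$ for $i\ge 2$ is bounded above and below by positive constants. Therefore the normal component $|f_1\frac{\partial}{\partial z_1}|^2_{\omega_f}$ is always integrable against $\omega_f^n$ (it is bounded, in fact decaying), regardless of whether $f_1$ vanishes on $D$ — so the normal component never causes a problem. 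The only potential divergence comes from the tangential components $\sum_{i\ge 2}f_i\frac{\partial}{\partial z_i}$. Here $|\mathfrak{X}|^2_{\omega_f}$ is comparable to $\sum_{i\ge 2}|f_i|^2$ near $D$, a bounded, generically nonvanishing quantity, and the volume form $\omega_f^n$ is comparable to $\frac{|dz_1|^2}{(|z_1|\log|z_1|)^2}\wedge(\text{smooth volume on }D)$, equivalently (in the cylindrical coordinate $t$) to $e^{-t}\,dt\wedge d\theta\wedge\omega_D^{n-1}$. Since $\int_A^\infty e^{-t}\,dt<\infty$, this contribution is finite as well. Thus, paradoxically, \emph{every} holomorphic vector field would seem to be $L^2$ — and the resolution is that the condition "$f_1 = 0$ on $D$" is exactly the condition that $\mathfrak{X}$, a priori only a vector field on $X$ defining a flow that may move $D$, restricts to a genuine vector field on $X\setminus D$ that is \emph{bounded} in the Poincar\'e metric; if $f_1\ne 0$ on $D$ the flow of $\mathfrak{X}$ does not preserve $D$ and the relevant object is not $\mathfrak{X}$ itself but its associated potential, whose gradient picks up a factor $(|z_1|\log|z_1|)^{-1}$ from the dual metric, making $|\nabla\varphi|^2 \sim (|z_1|\log|z_1|)^{-2}$ in the normal direction — and then $\int (|z_1|\log|z_1|)^{-2}\cdot e^{-t}\,dt = \int e^{t}\,dt = \infty$.

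So the correct local model computation is: decompose $\mathfrak{X} = \mathfrak{X}^{\mathrm{tan}} + \mathfrak{X}^{\mathrm{nor}}$ with $\mathfrak{X}^{\mathrm{nor}} = \mathrm{Re}(f_1\frac{\partial}{\partial z_1})$; show $\|\mathfrak{X}^{\mathrm{tan}}\|_{L^2}$ is always finite by the volume-form estimate above; and show $\|\mathfrak{X}^{\mathrm{nor}}\|_{L^2}$ is finite iff $f_1$ vanishes on $D$. For the last point, use that $|\mathfrak{X}^{\mathrm{nor}}|^2_{\omega_f}$ is comparable to $|f_1|^2/(|z_1|^2(\log|z_1|)^2)$ — here I was too hasty above; the point is that $\mathfrak{X}$ is a holomorphic vector field on $X$, and $\frac{\partial}{\partial z_1}$ has norm $|z_1|\log|z_1|$ \emph{only if} we use the metric on the tangent bundle, but the relevant quantity for the moment-map / potential picture is $J\mathfrak{X}$ applied to $t = \log(-\log|z_1|^2)$, which behaves like $\mathrm{Re}(f_1)\cdot\frac{1}{|z_1|^2\log|z_1|}$ and is unbounded unless $f_1|_D = 0$. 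Then integrate: if $f_1|_D\ne 0$, $|\mathfrak{X}|^2_{\omega_f} \gtrsim (|z_1|\log|z_1|)^{-2}$ on a neighbourhood, and $\int_{\mathcal{N}\setminus D}(|z_1|\log|z_1|)^{-2}\,\omega_f^n$ diverges because in cylindrical coordinates this is $\int_A^\infty e^{2t}\cdot e^{-t}\,dt = \int_A^\infty e^t\,dt = \infty$; conversely if $f_1 = O(|z_1|)$ near $D$ then $|f_1\frac{\partial}{\partial z_1}|^2_{\omega_f} = O((\log|z_1|)^{-2}) = O(e^{-2t}) = O(1)$, which is integrable against the finite-volume form. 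The main obstacle is keeping the bookkeeping of the two metrics straight — the metric on vectors versus the dual metric relevant for gradients/potentials — and carefully justifying that the $O(e^{-t})$ error terms in \eqref{metricasymptotics} (or, without that stronger assumption, the quasi-isometry of Definition \ref{poincaretypemetdefn}) do not affect either the convergence or the divergence; once the model computation is pinned down, the globalisation via the tubular neighbourhood decomposition and finite volume of $X\setminus D$ is routine.
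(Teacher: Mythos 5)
Your overall strategy is the right one — localise near $D$, use finite volume of $X\setminus D$ to discard the interior, split $\mathfrak{X}$ into the tangential components $\sum_{i\ge2}f_i\partial_{z_i}$ and the normal component $f_1\partial_{z_1}$, and read off the $L^2$ condition from the model. This matches the local computation the paper itself carries out in the proof of Proposition \ref{kercokerlem} (the lemma as stated is cited from Auvray, so the paper has no independent proof of it to compare against, but the analogous pointwise-norm estimate $g_{1\bar1}|\sigma_1|^2 = O(|z_1|^2/(|z_1|^2\log^2|z_1|^2)) = O(e^{-2t})$ for $\sigma_1(0)=0$ appears there and is exactly what you need for the "if" direction). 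However, the write-up has two genuine errors and a long false detour before arriving at the correct answer.

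The first error is the sign of the exponent. With $g_{1\bar1} = (|z_1|\log|z_1|)^{-2}$ one has $|\partial_{z_1}|^2_{\omega_f} = g_{1\bar1} \sim (|z_1|\log|z_1|)^{-2}$, which \emph{blows up} as $z_1\to 0$, not $(|z_1|\log|z_1|)^{2}$ as in your opening paragraph. Your "paradox" — that every holomorphic field appears to be $L^2$ — is an artifact of this flipped exponent, and the ensuing digression about the flow moving $D$, dual metrics, and potentials is a red herring: nothing in the lemma requires the potential or the dual metric. The resolution is simply that the fibrewise norm of $\partial_{z_1}$ grows near $D$, so the normal component must vanish on $D$ to compensate.

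The second error survives into your "corrected" computation: you write $(|z_1|\log|z_1|)^{-2} = e^{2t}$, but from $e^t = -\log|z_1|^2$ one gets $|z_1|^2 = e^{-e^t}$ and $(\log|z_1|)^2 \sim e^{2t}$, so $(|z_1|\log|z_1|)^{-2} \sim e^{e^t}\cdot e^{-2t} = e^{e^t - 2t}$, which is double-exponential in $t$. You seem to have kept the $(\log|z_1|)^{-2}\sim e^{-2t}$ factor but dropped the $|z_1|^{-2}\sim e^{e^t}$ factor. Here the omission happens not to change the verdict — both $\int e^t\,dt$ and $\int e^{e^t-3t}\,dt$ diverge — but in the context of the paper, where the same norm estimate is reused for weighted $C^{k,\alpha}_\eta$ and $L^2_\eta$ spaces, the exponent matters. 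The one computation you get exactly right is the convergent case: if $f_1 = O(z_1)$ then $|f_1|^2 g_{1\bar1} = O((\log|z_1|)^{-2}) = O(e^{-2t})$, bounded and hence integrable against the finite volume, which is the argument the paper makes. In summary: correct conclusion, correct strategy, but the proof as written does not stand as a proof — the first two paragraphs should be deleted and the exponent in the divergence estimate corrected to $e^{e^t-2t}$.
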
 

We will also need to extend certain functions on $D$ to $X$. We achieve this as follows. For a function $f$ on $D$, we can extend $f$ to an $S^1$-invariant function $\pi^* f$ near $D$ by using the projection $\pi : \mathcal{N} \setminus D \rightarrow D$ from equation \eqref{normalproj1}. Using an $S^1$-invariant bump function supported on $\mathcal{N}$ and only depending on $t$, we can then consider $\chi \pi^*f$ as a globally defined function. 

Now, let $\mathfrak{h}^{D}$ be the space of real holomorphic vector fields on $D$ and let $s$ be the codimension of the subspace consisting of vector fields induced by a vector field tangent to $D$. Recall that $\varphi$ was the K\"ahler potential of the Poincar\'e type metric $\omega$. The functions on $D$ we need to extend to a neighbourhood of $D$ are given by taking a function $f \in \textnormal{Ker }  \mathcal{D}^*_D \mathcal{D}_D$, and then pulling back the corresponding real holomorphic vector field $\mathfrak{X}_f$ to the tubular neighbourhood around $D$. Here $\mathcal{D}^*_D \mathcal{D}_D$ denotes the Lichnerowicz operator on $D$ associated to the metric $\omega_{0|D}$. We then let $\psi_f$ be given by 
\begin{align}\label{psif} \psi_f = \chi (\pi^* f + d \varphi (\pi^* \mathfrak{X}_f) ).
\end{align}
Given a basis $1=f_0, f_1, \cdots , f_r$ for $ \textnormal{Ker }  \mathcal{D}^*_D \mathcal{D}_D$, we let $\psi_i = \psi_{f_i}$. We will assume $f_{0}, \cdots, f_{s}$ form a basis for the subspace of $\textnormal{Ker }  \mathcal{D}_D^* \mathcal{D}_D$ of potentials for vector fields on $D$ induced by vector fields on $X$.

Finally, let $\mathfrak{h}_0^{D}$ denote the vector fields in $\mathfrak{h}_{//}^{D}$ whose induced vector field on $D$ vanishes, and let $\overline{\mathfrak{h}_0^{D}}$ denote the potentials for such vector fields. Our characterisation of the kernel and cokernel for the relevant weights is then the following. 

\begin{prop}\label{kercokerlem} Consider the Lichnerowicz operator
\begin{align*} \mathcal{D}^* \mathcal{D} = \mathcal{D}^*_{\omega} \mathcal{D}_{\omega} : C^{4,\alpha}_{\eta} (X \setminus D) \rightarrow C^{0,\alpha}_{\eta} (X \setminus D)
\end{align*}
on the Poincar\'e type weighted spaces. Then there is a $\kappa > 0$ such that 
\begin{align*}
 \textnormal{Ker } \mathcal{D}^* \mathcal{D}_{C^{4,\alpha}_{\eta}} =& \overline{\mathfrak{h}_{//}^D}  \textnormal{ if } \eta \in (0,1), \\ 
\textnormal{Ker } \mathcal{D}^* \mathcal{D}_{C^{4,\alpha}_{\eta}} \subseteq& \overline{\mathfrak{h}_0^{D}} \textnormal{ and is of codimension }1 \textnormal{ if } \eta \in (- \kappa, 0), \\
 C^{0,\alpha}_{\eta} \cap   \overline{\mathfrak{h}_{//}^D}^{\perp} =& \textnormal{ Im} (\mathcal{D}^* \mathcal{D}_{C^{4,\alpha}_{\eta}}) \oplus \langle \mathcal{D}^* \mathcal{D} ( \psi_i ): i \in \{s+1, \cdots, r \} \rangle   \textnormal{ if } \eta \in (-\kappa,0).
\end{align*}
\end{prop}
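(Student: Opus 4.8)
The plan is to deduce the three assertions of Proposition \ref{kercokerlem} from the Fredholm characterisation \eqref{imageintermsofker}, together with Lemma \ref{parallelvfslem} and the integration-by-parts argument that identifies the relevant kernels with spaces of Hamiltonian holomorphic vector fields. The key point throughout is the duality: for $\eta$ not an indicial root, $\textnormal{Im }\mathcal{D}^*\mathcal{D}_{C^{k+4,\alpha}_\eta} = (\textnormal{Ker }\mathcal{D}^*\mathcal{D}_{C^{k+4,\alpha}_{1-\eta}})^\perp$, so the cokernel for weight $\eta$ is governed by the kernel for weight $1-\eta$. Since we will have taken $\kappa$ small, the weight $1-\eta$ lies in $(1, 1+\kappa)$, and one needs to know the kernel there.

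First I would pin down the kernels. By Lemma \ref{parallelvfslem}, an element of $\mathfrak{h}$ has an $L^2$ potential iff it is tangent to $D$; an integration-by-parts argument (the "integration by parts lemma" referred to in the introduction) shows that any $C^{4,\alpha}_\eta$ solution of $\mathcal{D}^*\mathcal{D}f=0$ with $\eta$ small enough that $f$ decays fast enough to justify integrating by parts must have $\mathcal{D}f=0$, i.e. $f$ is a Hamiltonian holomorphic potential; combined with the decay rate this forces $f\in\overline{\mathfrak{h}_{//}^D}$ when $\eta\in(0,1)$ (the constant functions are excluded since they do not decay, but their potential vector field is zero — here one must be slightly careful: constants are in $\textnormal{Ker}$ for $\eta\ge 0$ but $e^{\eta t}$ weight with $\eta>0$ forces decay so constants are out; for $\eta\in(-\kappa,0)$ weight allows mild growth so constants re-enter, giving the codimension-$1$ statement, and the extra decay at rate $\kappa$ rules out any vector field whose induced field on $D$ is nonzero, hence the containment in $\overline{\mathfrak{h}_0^D}$). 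This uses the asymptotic expansion \eqref{metricasymptotics} and Auvray's analysis of indicial roots to guarantee no indicial root lies in $(-\kappa,0)\cup(0,1)$ except possibly $0$ and $1$ themselves, so $\kappa$ is chosen accordingly. For the weight $1-\eta\in(1,1+\kappa)$, the kernel is $\overline{\mathfrak{h}_{//}^D}$ enlarged by the functions $\psi_i$, $i\in\{0,\dots,r\}$: indeed $\psi_f$ is, by construction \eqref{psif}, the natural extension to $X\setminus D$ of a Hamiltonian potential $f$ on $D$, and one checks $\mathcal{D}^*\mathcal{D}\psi_f = O(e^{-t})\cdot(\text{something}) $ decaying like $e^{-t}$, so that $\psi_f\in\textnormal{Ker }\mathcal{D}^*\mathcal{D}_{C^{4,\alpha}_{1-\eta}}$ for $1-\eta$ just above $1$ — actually $\psi_f$ grows like $\pi^* f$ which is bounded, hence lies in $C^{4,\alpha}_{1-\eta}$ for any $1-\eta\ge 0$; the content is that $\mathcal{D}^*\mathcal{D}\psi_f$ lies in $C^{0,\alpha}_{1-\eta}$, which is where the precise definition \eqref{psif} with the correction term $d\varphi(\pi^*\mathfrak{X}_f)$ is used to kill the slowest term.

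With the two kernels identified, the image statement follows by bookkeeping. We have $\textnormal{Ker }\mathcal{D}^*\mathcal{D}_{C^{4,\alpha}_{1-\eta}}$ spanned by $\overline{\mathfrak{h}_{//}^D}$ together with $\psi_0,\dots,\psi_r$, but $\psi_0,\dots,\psi_s$ already lie in (or are congruent modulo $\overline{\mathfrak{h}_{//}^D}$ to elements of) $\overline{\mathfrak{h}_{//}^D}$ because $f_0,\dots,f_s$ are potentials for vector fields on $D$ induced by vector fields on $X$ tangent to $D$, so $\psi_{f_i}$ differs from such a genuine potential by a compactly-supported-near-$D$ error absorbed into $\overline{\mathfrak{h}_{//}^D}$; hence the span of the kernel modulo $\overline{\mathfrak{h}_{//}^D}$ is exactly $\langle\psi_{s+1},\dots,\psi_r\rangle$. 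Taking $\perp$ and using \eqref{imageintermsofker}:
\begin{align*}
\textnormal{Im }\mathcal{D}^*\mathcal{D}_{C^{4,\alpha}_\eta} = \big(\overline{\mathfrak{h}_{//}^D} + \langle\psi_{s+1},\dots,\psi_r\rangle\big)^\perp = \big(\overline{\mathfrak{h}_{//}^D}\big)^\perp \cap \langle\mathcal{D}^*\mathcal{D}\psi_{s+1},\dots\rangle^{\perp\perp}\ \text{-complement},
\end{align*}
and intersecting with $C^{0,\alpha}_\eta$ one gets $C^{0,\alpha}_\eta\cap(\overline{\mathfrak{h}_{//}^D})^\perp = \textnormal{Im}(\mathcal{D}^*\mathcal{D}_{C^{4,\alpha}_\eta})\oplus\langle\mathcal{D}^*\mathcal{D}(\psi_i): s+1\le i\le r\rangle$, provided one checks the $\mathcal{D}^*\mathcal{D}\psi_i$ are linearly independent modulo the image and are themselves orthogonal to $\overline{\mathfrak{h}_{//}^D}$ — the latter because $\overline{\mathfrak{h}_{//}^D}\subseteq\textnormal{Ker }\mathcal{D}^*\mathcal{D}$ and $\mathcal{D}^*\mathcal{D}$ is formally self-adjoint, and the former because a relation $\mathcal{D}^*\mathcal{D}(\sum c_i\psi_i)\in\textnormal{Im }\mathcal{D}^*\mathcal{D}_{C^{4,\alpha}_\eta}$ would force $\sum c_i\psi_i$ to differ from an element of $C^{4,\alpha}_\eta$ by something in $\textnormal{Ker }\mathcal{D}^*\mathcal{D}_{C^{4,\alpha}_{1-\eta}}$, pushing the leading asymptotic $\sum c_i\pi^* f_i$ into $\overline{\mathfrak{h}_{//}^D}$, contradicting that $f_{s+1},\dots,f_r$ are independent in $\textnormal{Ker }\mathcal{D}^*_D\mathcal{D}_D$ modulo the induced-from-$X$ subspace.

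The main obstacle I expect is the precise asymptotic analysis needed to establish that $\mathcal{D}^*\mathcal{D}\psi_f\in C^{0,\alpha}_{1-\eta}$ with $1-\eta$ slightly above $1$ (equivalently, that $\psi_f$ really is in the kernel at that weight), and dually that no $\psi_f$ with $f$ not induced from $X$ can be corrected to decay faster — this is exactly the place where the subtle difference between the Poincaré-type case and the compact case lives, and where the definition \eqref{psif} of $\psi_f$ with its correction term has been engineered. It rests on Auvray's expansion of extremal Poincaré type metrics \eqref{metricasymptotics} and a careful Taylor expansion of the Lichnerowicz operator in the cylindrical variable $t$, separating the $S^1$-invariant and orthogonal parts as in the proof of Proposition \ref{globalw2kestimate}. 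The identification of $\kappa$ with a gap in the indicial roots, and the verification that $\psi_i$ for $i\le s$ are genuinely redundant, are comparatively routine once this is in hand.
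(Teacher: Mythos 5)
There are two substantive errors in your proposal, and they are load-bearing.

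First, the direction of the weight convention is reversed. In the paper's convention, $C^{k,\alpha}_\eta = e^{\eta t}C^{k,\alpha}$, so $\eta > 0$ \emph{permits growth} of order $e^{\eta t}$ near $D$, while $\eta < 0$ \emph{forces decay}. Hence the constant function $1$ lies in $C^{4,\alpha}_\eta$ precisely when $\eta \ge 0$; it \emph{drops out} of the kernel when $\eta$ goes negative, which is the source of the codimension-$1$ statement for $\eta \in (-\kappa,0)$. You say the opposite (``$\eta>0$ forces decay so constants are out; for $\eta\in(-\kappa,0)$ weight allows mild growth so constants re-enter''), and as a consequence your explanation of the codimension-$1$ claim is incoherent: if constants ``re-entered'' the kernel, the codimension would stay $0$, not become $1$. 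The same confusion makes your description of why $\ker = \overline{\mathfrak{h}_{//}^D}$ for $\eta\in(0,1)$ internally inconsistent, even though the underlying idea (Lemma \ref{ptintbyparts} plus Lemma \ref{parallelvfslem}) is the right one.

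Second, and more fundamentally, your cokernel computation rests on a false assertion: that $\psi_f \in \mathrm{Ker}\,\mathcal{D}^*\mathcal{D}_{C^{4,\alpha}_{1-\eta}}$ for $1-\eta$ slightly above $1$, ``because'' $\mathcal{D}^*\mathcal{D}\psi_f = O(e^{-t})$. Decay of $\mathcal{D}^*\mathcal{D}\psi_f$ is not vanishing of $\mathcal{D}^*\mathcal{D}\psi_f$; the functions $\psi_f$ of \eqref{psif} are \emph{not} in the kernel, and Auvray's result is precisely that $\mathcal{D}^*\mathcal{D}(\psi_f) \in C^{0,\alpha}_{-1}$, a nonzero decaying function. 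Your strategy of ``compute $\mathrm{Ker}$ at weight $1-\eta$ and dualise'' is in principle a legitimate alternative route, but you never actually produce the kernel at weight $1-\eta \in (1,1+\kappa)$ (which would require correcting each $\psi_f$ by a genuine solution of the homogeneous equation, and separately ruling out spurious elements). The paper avoids this entirely: it computes the index on the interval $(-\kappa,0)$ via Lockhart--McOwen and Auvray's local computation ($\mathrm{ind} = -(r+1)$), shows $\mathrm{Ker} \subseteq \overline{\mathfrak{h}_0^D}$ with codimension $\ge 1$, and then exhibits the $r-s$ elements $\mathcal{D}^*\mathcal{D}(\psi_i)$, $i\in\{s+1,\dots,r\}$, lying in $C^{0,\alpha}_\eta \cap (\overline{\mathfrak{h}_{//}^D})^\perp$ but not in the image --- using Lemma \ref{psiperturblem}, which is the key lemma you allude to but do not formulate or invoke correctly. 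The index count then forces both the kernel bound and the cokernel count to be sharp. Your proposal omits the index computation altogether, so even setting aside the kernel error at weight $1-\eta$, it has no mechanism to conclude that the $(r-s)$ exhibited elements (plus the codimension-$1$ kernel drop) account for \emph{everything}.
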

Note that by Theorem \ref{ptfredholmthm}, the cokernel for $\eta \in (0,1)$ can be identified with the kernel for the weight $1-\eta$, which also lies in $(0,1)$ and hence equals $\overline{\mathfrak{h}^D_{//}}$. Proposition \ref{kercokerlem} then says that when going to small negative weights, the kernel decreases to a codimension one subspace of $\overline{\mathfrak{h}_0^{D}}$ and the cokernel increases by the span of the elements $\mathcal{D}^* \mathcal{D} ( \psi_i )$ for $ i \in \{s+1, \cdots, r \}$.

The following integration by parts result will be used several times.
\begin{lem}\label{ptintbyparts} Let $f \in C^{4,\alpha}_{\eta}$ and $g \in C^{4,\alpha}_{\eta'}$ with $\eta + \eta' < 1$. Then 
\begin{align*} \int_{X \setminus D} \mathcal{D}^* \mathcal{D} (f) g \omega^n = \int_{X\setminus D} \langle \mathcal{D}(f) , \mathcal{D} (g)\rangle \omega^n.
\end{align*}
\end{lem}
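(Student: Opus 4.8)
The statement is an integration-by-parts identity for the Lichnerowicz operator $\mathcal{D}^*\mathcal{D}$ across a non-compact manifold $X\setminus D$, and the only subtlety compared with the compact case is controlling the boundary terms that arise when integrating over a truncated manifold and passing to the limit. The plan is to truncate, integrate by parts on the compact piece, and show the boundary contributions vanish in the limit under the hypothesis $\eta+\eta'<1$.

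\emph{Step 1: Truncation.} For $s>A$ let $K_s=\{x\in X\setminus D:\ t(x)\le s\}$, a compact manifold with smooth boundary $\partial K_s=\{t=s\}$, and let $\nu$ be the outward unit normal (essentially $\partial_t$ up to lower-order corrections from the asymptotics \eqref{metricasymptotics}). On $K_s$ the divergence theorem applies and one gets
\begin{align*}
\int_{K_s}\mathcal{D}^*\mathcal{D}(f)\,g\,\omega^n=\int_{K_s}\langle\mathcal{D}(f),\mathcal{D}(g)\rangle\,\omega^n+\int_{\partial K_s}\mathcal{B}(f,g),
\end{align*}
where $\mathcal{B}(f,g)$ is the usual bilinear boundary expression built from $f,g$ and their covariant derivatives up to order three, contracted with $\nu$ and the induced volume form on $\partial K_s$. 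This is just the standard Green's formula for the fourth-order operator $\mathcal{D}^*\mathcal{D}=(\mathcal{D})^*\mathcal{D}$ on a compact manifold with boundary; I would not reproduce the pointwise formula, only record that each term is (schematically) a product of a derivative of $f$ of order $\le 3$ with a derivative of $g$ of order $\le 3$, the total number of derivatives on the pair being $3$, paired against the normal.

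\emph{Step 2: Decay of the boundary term.} Since $f\in C^{4,\alpha}_\eta$ and $g\in C^{4,\alpha}_{\eta'}$, all covariant derivatives of $f$ up to order three are $O(e^{\eta t})$ and those of $g$ up to order three are $O(e^{\eta' t})$, pointwise in the Poincaré-type metric. Hence the integrand $\mathcal{B}(f,g)$ is $O(e^{(\eta+\eta')t})$ with respect to the induced metric on the slice $\{t=s\}$. The induced Riemannian volume of $\{t=s\}$ in the Poincaré-type metric is comparable to $e^{-s}$ times a fixed volume (coming from the $e^{-2t}\vartheta^2$ factor in $g$, the $\vartheta$-direction contracting like $e^{-t}$, while the $\pi^*g_D$ directions are of bounded size; see the expansion after \eqref{fndecomp} and the volume comparison used in Lemma \ref{holderinsobolev}). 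Therefore
\begin{align*}
\Big|\int_{\partial K_s}\mathcal{B}(f,g)\Big|\le C\,e^{(\eta+\eta'-1)s},
\end{align*}
and since $\eta+\eta'-1<0$ this tends to $0$ as $s\to\infty$.

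\emph{Step 3: Passing to the limit.} It remains to check both volume integrals converge absolutely as $s\to\infty$, so that the truncated identity passes to the limit. The right-hand integrand $\langle\mathcal{D}(f),\mathcal{D}(g)\rangle$ is $O(e^{(\eta+\eta')t})$ and $X\setminus D$ has finite volume with respect to $e^{-2\delta t}\omega^n$ whenever $\delta>-\tfrac12$ (as in the proof of Lemma \ref{holderinsobolev}); here this means we need $\eta+\eta'>-1$ — wait, more carefully: $e^{(\eta+\eta')t}\omega^n$ is integrable iff $\eta+\eta'<\tfrac12+\tfrac12=1$, using that $\omega^n$ is mutually bounded with $e^{-t}dt\wedge d\theta\wedge\omega_D^{n-1}$, so the exponent condition is exactly $(\eta+\eta')-1<0$, i.e. $\eta+\eta'<1$. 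So the hypothesis gives absolute convergence of the right-hand side directly. For the left-hand integrand $\mathcal{D}^*\mathcal{D}(f)\cdot g$: $g=O(e^{\eta' t})$ and $\mathcal{D}^*\mathcal{D}(f)\in C^{0,\alpha}_\eta$ so is $O(e^{\eta t})$, giving again an $O(e^{(\eta+\eta')t})$ integrand, integrable by the same computation. Letting $s\to\infty$ in the displayed identity of Step 1, the boundary term drops out and we obtain
\begin{align*}
\int_{X\setminus D}\mathcal{D}^*\mathcal{D}(f)\,g\,\omega^n=\int_{X\setminus D}\langle\mathcal{D}(f),\mathcal{D}(g)\rangle\,\omega^n,
\end{align*}
as claimed.

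\textbf{Main obstacle.} The only real point requiring care is Step 2: one must be sure that \emph{every} term in the boundary expression $\mathcal{B}(f,g)$ carries a total of exactly three derivatives distributed between $f$ and $g$ (so that the $C^{4,\alpha}_\eta\times C^{4,\alpha}_{\eta'}$ regularity suffices and the product decays like $e^{(\eta+\eta')t}$), and that no term is secretly worse because of how the fourth-order operator is integrated by parts. This is handled by writing $\mathcal{D}^*\mathcal{D}$ as the composition $(\mathcal{D})^*\circ\mathcal{D}$ with $\mathcal{D}$ second order and $(\mathcal{D})^*$ its formal adjoint, integrating by parts twice and tracking which derivatives land on the normal; the decay of $e^{-s}$ in the slice volume then always beats the borderline growth, giving the strict inequality exactly at $\eta+\eta'=1$. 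A secondary, purely bookkeeping, point is that the normal $\nu$ and the slice $\{t=s\}$ are defined using the perturbed function $t$ rather than the model $\log(-\log|\sigma|^2)$, but since $t$ differs from the model to all orders by $O(e^{-\mathfrak u})$, this changes none of the exponents.
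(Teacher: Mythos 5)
Your proof is correct, and it reaches the same estimate the paper does ($\eta+\eta'<1$ ensures the error terms from truncating near $D$ decay), but the route through the truncation is slightly different. The paper avoids analyzing the explicit boundary data of Green's formula: instead of cutting sharply at $\{t=s\}$, it multiplies by a smooth cutoff $\chi_a$ depending only on $t$, observes that $\int\chi_a\,\mathcal{D}^*\mathcal{D}(f)\,g=\int\langle\mathcal{D}f,\mathcal{D}(\chi_a g)\rangle$ exactly (since $\chi_a g$ has compact support), and then controls the \emph{commutator} $\mathcal{D}(\chi_a g)-\chi_a\mathcal{D}(g)$, which is supported on the shell $\{a\le t\le a+1\}$ and is a first-order expression in $g$ with coefficients involving $\nabla\chi_a,\nabla^2\chi_a$. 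This sidesteps the bookkeeping you flag as the ``main obstacle'': one never needs to write down, or even count derivatives in, the boundary bilinear form $\mathcal{B}(f,g)$, because the error lives in a bulk integral over a shell and is estimated by exactly the same $\int_a^{a+1}e^{(\eta+\eta'-1)t}\,dt$ computation. Your Green's-formula version is equally valid and perhaps more geometrically transparent (it makes the ``slice volume $\sim e^{-s}$'' mechanism explicit), but it does require you to verify the structure of $\mathcal{B}(f,g)$ — that it is a finite sum of terms with exactly three total derivatives split between $f$ and $g$ — whereas the paper's commutator argument gets this for free from the Leibniz rule applied to the second-order operator $\mathcal{D}$. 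Both routes otherwise rely on the same two facts you correctly identify: derivatives of a $C^{4,\alpha}_\eta$ function up to the relevant order are $O(e^{\eta t})$, and $\omega^n$ is comparable to $e^{-t}\,dt\wedge d\theta\wedge\omega_D^{n-1}$ near $D$.
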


\begin{proof} For simplicity we consider the case when $\eta' = \eta$, so in particular $\eta < \frac{1}{2}$. It will however be clear that the same argument goes through for any choice of $\eta$ and $\eta'$ satisfying $\eta + \eta' < 1$.

Let $\chi : \mathbb{R} \rightarrow \mathbb{R}$ be a bump function supported on $(- \infty, 1]$ and equal to $1$ in $(- \infty, 0]$ and let $\chi_a (x) = \chi (x -a)$. We can consider $\chi$ as a function on $X\setminus D$ by composing with the function $t$. We then have that
\begin{align}\label{lim1} \lim_{a \rightarrow \infty} \int_{X \setminus D}  \chi_a \mathcal{D}^* \mathcal{D} (f) g \omega^n &= \int_{X \setminus D} \mathcal{D}^* \mathcal{D} (f) g \omega^n , \\
\label{lim2} \lim_{a \rightarrow \infty}\int_{X\setminus D} \chi_a \langle \mathcal{D}(f), \mathcal{D} (g) \rangle  &= \int_{X\setminus D} \langle \mathcal{D}(f) , \mathcal{D} (g) \rangle \omega^n.
\end{align}

Since $\chi_a g$ has compact support, it follows that
\begin{align*}\int_{X \setminus D}  \chi_a \mathcal{D}^* \mathcal{D} (f) g \omega^n = \int_{X \setminus D}   \langle \mathcal{D} (f) , \mathcal{D} (\chi_a g) \rangle \omega^n .
\end{align*}
This differs from 
\begin{align*} \int_{X\setminus D} \chi_a \langle \mathcal{D}(f) , \mathcal{D} (g) \rangle \omega^n
\end{align*}
by terms involving at least one derivative of $\chi_a$, hence is an integral over $K_a = \{ x \in X \setminus D : t(x) \in [a,a+1] \}$.

Since $f \in C^{4,\alpha}_{\eta}$, we have that $| \mathcal{D}(f)| \leq c e^{\eta t}$ for some $c >0$. Also, the derivative of $\chi_a$ is bounded on $[a,a+1]$ independently of $a$. Finally, we have that by possibly increasing $c$, $g$ and the norm of its gradient is bounded by $c e^{\eta t}$ as well.
Thus
\begin{align*} | \int_{X \setminus D}  \chi_a \mathcal{D}^* \mathcal{D} (f) g \omega^n - \int_{X\setminus D} \langle \mathcal{D}(f) , \mathcal{D} (\chi_a g)\rangle \omega^n | \leq C \int_{K_a} e^{2 \eta t}  \omega^n
\end{align*}
for some $C > 0$. This latter integral is mutually bounded with 
\begin{align*} \int_a^{a+1} e^{2\eta t - t} dt,
\end{align*}
which goes to zero as $a \rightarrow \infty$ precisely if $\eta < \frac{1}{2}$. Thus the limits in \eqref{lim1} and \eqref{lim2} agree, and the proof is complete.
\end{proof}
We can now prove Proposition \ref{kercokerlem}. First note that $0$ is an indicial root. Indeed, the constant functions are in the kernel of $\mathcal{D}^* \mathcal{D}$ and are in $C^{k,\alpha}_{\delta}$ precisely when $\delta \geq 0$, so the kernel changes at $\delta = 0$. By the duality between the kernel and cokernel for weights $\delta$ and $1- \delta$, it follows that $1$ is also an indicial root. Moreover, Auvray showed in \cite{auvray14b} that there are no indicial roots in $(0,1)$. Thus there exists a $\kappa > 0$ such that the kernel and cokernel of $\mathcal{D}^* \mathcal{D}$ is constant in the intervals stated.

We first establish the claim for $\eta \in (0,1)$. If $f \in  C^{4,\alpha}_{\eta} (X \setminus D)$ with $\eta < \frac{1}{2}$, we may apply Lemma \ref{ptintbyparts} to conclude that
\begin{align*} \int_{X \setminus D} \mathcal{D}^* \mathcal{D} (f) f  = \int_{X \setminus D} | \mathcal{D} f |^2.
\end{align*}
Thus if $f \in \textnormal{Ker } \mathcal{D}^* \mathcal{D}$, we have that $f \in \textnormal{Ker } \mathcal{D} $. This choice of weights means that the holomorphic vector field $\mathfrak{X}_f$ associated to $f$ then is in $L^2$. Thus it follows from Lemma \ref{parallelvfslem} that $\mathfrak{X}_f \in \mathfrak{h}_{//}^D$ and hence $f \in \overline{\mathfrak{h}_{//}^D} $. Since the elements of $\overline{\mathfrak{h}_{//}^D}$ are in $C^{4,\alpha}_{\eta}$ for any $\eta>0$, it follows that the kernel is as stated for $\eta \in (0,\frac{1}{2})$. Since there are no indicial roots in $(0,1)$, the same conclusion then holds for all $\eta \in (0,1)$.

For $\eta \in (- \kappa, 0)$ the kernel is strictly smaller, since the constants are in $\overline{\mathfrak{h}_{//}^D}$, but not in the domain of $\mathcal{D}^* \mathcal{D}$ for these weights. For these weights the associated holomorphic vector field has to have norm in the order of $e^{\eta' t}$ with $\eta' \leq \eta$. Hence if $f \in \textnormal{Ker } \mathcal{D}_{C^{4,\alpha}_{\eta}}$ with $\eta < 0$, we have to have that $\mathfrak{X}_f^D$, the induced vector field on $D$, is trivial, i.e. $f \in \overline{\mathfrak{h}_0^{D}}$. Indeed, in taking the norm of a vector field 
\begin{align*} \sum_i \sigma_i \frac{\partial}{\partial z_i}
\end{align*}
with $\sigma_1 (0) = 0$, we have, in the model case, that the contribution from $\sigma_1 \frac{\partial}{\partial z_1}$ is
\begin{align*} g_{1 \overline{1}} |\sigma_1|^2 &= O( \frac{|z_1|^2}{ |z_1|^2 \log^2 (|z_1|^2)} ) \\
&= O(e^{-2t}).
\end{align*}
For $\sigma_i \frac{\partial}{\partial z_i}$ with $i>1$, the contribution to the norm is $O(1)$. Since the general case is mutually bounded with this it follows that for $f$ to lie in $C^{4,\alpha}_{\eta}$ with $-1 \leq \eta<0$, one necessarily has to have $\sigma_i = 0$ for all $i>1$, and then
\begin{align*} \| \mathfrak{X}_f \| = O(e^{-t}),
\end{align*}
as required. Note that since $\overline{\mathfrak{h}_0^{D}}$ also contains the constants, the codimension of $\textnormal{Ker } \mathcal{D}^* \mathcal{D}$ in $\overline{\mathfrak{h}_0^{D}}$ is at least one.

By \cite[Thm. 1.4]{lockhartmcowen85}, the index in this range of weights equals the index in the local case, which Auvray showed in \cite[Lem. 3.10]{auvray14b} is $- \textnormal{dim Ker } \mathcal{D}_D^* \mathcal{D}_{D}$, i.e. $ -(r+1)$. Note that the dimension of potentials for holomorphic vector fields on $D$ induced by a vector field tangent to $D$ on $X$ is $s+1$. Also, 
\begin{align*} \textnormal{ Im} (\mathcal{D}^* \mathcal{D}_{C^{4,\alpha}_{\eta}}) \subseteq \overline{\mathfrak{h}_{//}^D}^{\perp}
\end{align*}
Thus if we can exhibit at least $r-s$ linearly independent elements in $C^{0,\alpha}_{\eta} \cap \overline{\mathfrak{h}_{//}^D}^{\perp}$ that are not in the image of $\mathcal{D}^* \mathcal{D}$ on $C^{4,\alpha}_{\eta}$ and which are linearly independent of the image as well, then we have found the full cokernel of $\mathcal{D}^* \mathcal{D}$, because then our reduction of the kernel above implies that
\begin{align*} \textnormal{ind } (\mathcal{D}^* \mathcal{D}_{C^{4,\alpha}_{\eta}}) & =  \textnormal{ dim Ker } \mathcal{D}_{C^{4,\alpha}_{\eta}} - \textnormal{dim Coker } \mathcal{D}_{C^{4,\alpha}_{\eta}} \\
& \leq \big(\textnormal{dim } ( \overline{\mathfrak{h}_{//}^D}) - (s+1)\big) - \big( \textnormal{dim } ( \overline{\mathfrak{h}_{//}^D}) + r-s\big) \\
&= -(r+1),
\end{align*}
and so the kernel cannot be smaller, nor can the cokernel be any larger.

In \cite{auvray14b}, Auvray showed that 
\begin{align*} \mathcal{D}^* \mathcal{D} (\psi_f) \in C^{0,\alpha}_{-1},
\end{align*}
for any $f\in \ker \mathcal{D}_D^*\mathcal{D}_D$. If this is in the image of $\mathcal{D}^* \mathcal{D}$ on $C^{4,\alpha}_{\eta}$ with $\eta <0$, say 
\begin{align*} \mathcal{D}^* \mathcal{D} (\psi) =  \mathcal{D}^* \mathcal{D} (v),
\end{align*}
then 
\begin{align*} \psi - v \in  \textnormal{Ker } \mathcal{D}^* \mathcal{D}_{C^{4,\alpha}_{\eta'}}
\end{align*}
for any $\eta' > 0$. By the previous part, this implies $\psi - v = h \in  \overline{\mathfrak{h}_{//}^D}$. 

We now invoke Lemma \ref{psiperturblem} below which says that $\psi - h \in C^{4,\alpha}_{\eta}$ for some weight $\eta <0$ if and only if $f$ is a potential for the vector field on $D$ induced by $h$, under a suitable normalisation. This completes the proof, because then for each $\psi$ coming from an $f$ inducing a holomorphic vector field on $D$ which also is induced by a holomorphic vector tangent to $D$, we can choose a $h \in \overline{\mathfrak{h}_{//}^D}$ such that $\psi - h \in C^{4,\alpha}_{\eta}$ with $\eta < 0$ and $\mathcal{D}^* \mathcal{D} (\psi - h) = \mathcal{D}^* \mathcal{D} (\psi)$. Hence 
\begin{align*} \mathcal{D}^* \mathcal{D} (\psi) \in \textnormal{Im } \mathcal{D}^* \mathcal{D}_{C^{4,\alpha}_{\eta}}
\end{align*}
if and only if $f$ induces a holomorphic vector field on $D$ also induced by a holomorphic vector field on $X$ tangent to $D$. 

\begin{lem}\label{psiperturblem} For $f \in \textnormal{Ker } \mathcal{D}^*_D \mathcal{D}_D$, let $\psi = \psi_f = \chi ( \Pi^* f + d \varphi( \Pi^*\mathfrak{X}_f ) )$. Then there is a $h \in \overline{\mathfrak{h}_{//}^D}$ such that $\psi - h \in C^{4,\alpha}_{\eta}$ for some $\eta < 0$ if and only if the associated vector field $\mathfrak{X}_f$ of $f$ on $D$ is induced by a vector field in $\mathfrak{h}_{//}^D$.
\end{lem}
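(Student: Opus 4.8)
The plan is to pin down the leading behaviour of $\psi_f$ near $D$ and compare it with the $\omega$-potential of a holomorphic vector field on $X$. Recall the transformation rule for Hamiltonians: if $\mathfrak{X}\in\mathfrak{h}_{//}^D$ has $\omega_0$-potential $h^0_\mathfrak{X}$, a smooth function on all of $X$, then its $\omega$-potential with the normalisation making it represent an element of $\overline{\mathfrak{h}_{//}^D}$ is, on the tubular neighbourhood $\mathcal{N}$,
\[
h_\mathfrak{X} \;=\; h^0_\mathfrak{X} \;+\; d\varphi(\mathfrak{X}) \;+\; c_\mathfrak{X}
\]
for a constant $c_\mathfrak{X}$ --- precisely the combination appearing in \eqref{psif}, but with $\mathfrak{X}$ in place of the horizontal lift $\pi^*\mathfrak{X}_f$ and $h^0_\mathfrak{X}|_D$ in place of $\pi^*f$. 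Writing $g_\mathfrak{X}:=h^0_\mathfrak{X}|_D$ for the $\omega_{0|D}$-potential of the induced field $\mathfrak{X}|_D$ on $D$, on the region $\{\chi\equiv1\}$ we therefore get
\[
\psi_f - h_\mathfrak{X} \;=\; \bigl(\pi^*f - h^0_\mathfrak{X} - c_\mathfrak{X}\bigr) \;+\; d\varphi\bigl(\pi^*\mathfrak{X}_f - \mathfrak{X}\bigr).
\]
The ``suitable normalisation'' in the statement is the one matching the chosen potential $f$ of $\mathfrak{X}_f$ on $D$ to the constant $c_\mathfrak{X}$.

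For the implication (the vector field extends) $\Rightarrow$ (such an $h$ exists), take $h=h_\mathfrak{X}$ where $\mathfrak{X}\in\mathfrak{h}_{//}^D$ induces $\mathfrak{X}_f$, and normalise $f=g_\mathfrak{X}$. Then in the display above $\pi^*f-h^0_\mathfrak{X}$ is a smooth function on $\mathcal{N}$ vanishing on $D$ (as $\pi$ restricts to the identity there), hence $O(|\sigma|)$ with all its quasi-coordinate derivatives, so it lies in $C^{4,\alpha}_\eta$ for every $\eta$; and $\pi^*\mathfrak{X}_f-\mathfrak{X}$ is a smooth vector field vanishing on $D$, i.e.\ all its components are $O(|\sigma|)$, so pairing it with $d\varphi$ --- whose $\omega$-norm is bounded by Definition \ref{poincaretypemetdefn} --- and matching the rate of the $\partial_{z_1}$-component ($O(|\sigma|)$) against the corresponding entry of $d\varphi$ (of size $O(|\sigma|^{-1}(-\log|\sigma|)^{-1})$), and similarly in the remaining directions, yields $d\varphi(\pi^*\mathfrak{X}_f-\mathfrak{X})=O(e^{-t})$ to all orders. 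On the complement of $\{\chi\equiv1\}$ --- a region where $t$ is bounded --- the function $\psi_f-h_\mathfrak{X}$ is bounded with all its derivatives. Hence $\psi_f-h_\mathfrak{X}\in C^{4,\alpha}_\eta$ for any $\eta\in(-1,0)$.

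For the converse, argue by contraposition. Suppose $\mathfrak{X}_f$ is not induced by any field in $\mathfrak{h}_{//}^D$, and let $h\in\overline{\mathfrak{h}_{//}^D}$ be arbitrary, coming from $\mathfrak{X}\in\mathfrak{h}_{//}^D$; then $W_0:=\mathfrak{X}_f-\mathfrak{X}|_D$ is a nonzero real holomorphic vector field on $D$, Hamiltonian on $D$ with $\omega_{0|D}$-potential $f-g_\mathfrak{X}$. Using the refined asymptotics behind \eqref{metricasymptotics} (Auvray), write $\varphi=\varphi_{\mathrm{mod}}+\varphi_{\mathrm{reg}}$ with $\varphi_{\mathrm{mod}}$ the $S^1$- and $\pi$-invariant radial model part and $\varphi_{\mathrm{reg}}$ bounded with bounded derivatives and admitting a limit $\varphi_D$ along $D$ for which $\omega_{0|D}+i\partial\bar\partial\varphi_D$ is the K\"ahler metric $\omega_{g_D}$ on $D$. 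Since $V(|\sigma|^2)$ vanishes to second order on $D$ for any $V$ tangent to $D$, one gets $V(\varphi_{\mathrm{mod}})=O(e^{-t})$; applied to $V=\pi^*\mathfrak{X}_f-\mathfrak{X}$ (a field tangent to $D$ restricting to $W_0$ on $D$), what is left, as $t\to\infty$ along the fibre over $x\in D$, is
\[
\lim_{t\to\infty}(\psi_f-h)\big|_{\pi^{-1}(x)} \;=\; f(x)-g_\mathfrak{X}(x)-c_\mathfrak{X}+W_0(\varphi_D)(x) \;=\; \bigl(\operatorname{pot}_{\omega_{g_D}}(W_0)\bigr)(x)+\mathrm{const},
\]
again by the transformation rule for Hamiltonians, now on $D$. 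As $W_0\neq0$ its $\omega_{g_D}$-Hamiltonian is non-constant, so this limit is a function on $D$ that is not identically zero; hence $\psi_f-h$ does not tend to $0$, and since every element of $C^{4,\alpha}_\eta$ with $\eta<0$ decays like $e^{\eta t}$, it cannot lie in any such space. As $h$ was arbitrary, this proves the contrapositive, and with it the lemma.

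The main obstacle is carrying out this bookkeeping in the weighted H\"older norms of Section \ref{fnspaces} rather than merely pointwise: one must verify that $\pi^*f-h^0_\mathfrak{X}$, $d\varphi(\pi^*\mathfrak{X}_f-\mathfrak{X})$ and $V(\varphi_{\mathrm{mod}})$ decay at the stated rate together with all their derivatives measured in the quasi-coordinates, and that the splitting $\varphi=\varphi_{\mathrm{mod}}+\varphi_{\mathrm{reg}}$ with $\varphi_{\mathrm{reg}}$ controlled up to $D$ is genuinely available --- both of which rest on Auvray's refined expansion for extremal Poincar\'e type metrics and must be invoked carefully. Keeping the normalisation constant $c_\mathfrak{X}$ and the choice of potential $f$ of $\mathfrak{X}_f$ consistent throughout is the content of the phrase ``under a suitable normalisation''.
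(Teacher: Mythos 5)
Your argument is correct and follows essentially the same route as the paper: you invoke Auvray's transformation rule $h = h_0 + d\varphi(Z)$ for the $\omega$- versus $\omega_0$-potentials of a field in $\mathfrak{h}_{//}^D$, normalise so that $f = h_0|_D$, and deduce the required decay of $\psi_f - h$ from cancellation of the leading terms; your converse, argued by contraposition via the limit of $\psi_f-h$ along the fibres of $\Pi$, is a fleshed-out version of the paper's terse observation that the associated vector fields must then agree on $D$. You add useful detail (the explicit handling of the $d\varphi$-term and the splitting $\varphi = \varphi_{\mathrm{mod}}+\varphi_{\mathrm{reg}}$), and correctly flag that the precise invocation of Auvray's refined expansion is where the technical burden lies.
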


\begin{proof} Let $Z \in \mathfrak{h}_{//}^D$ be a vector field on $X$ such that $Z_{|D} = \mathfrak{X}_f$. Let $h$ be the potential for $Z$ with respect to the Poincar\'e type metric $\omega = \omega_0 + i \partial \overline{\partial} \varphi$. Let $h_0$ be the corresponding potential with respect to the smooth background metric $\omega_0$. Using \cite[Prop. 1.2]{auvray14}, we then have that
\begin{align*} h &= h_0 +  d\varphi (Z) \\
&= h_0 + ( \nabla^{\omega_0} h_0) \cdot \varphi.
\end{align*}
Up to a constant, $f$ is the restriction of $h_0$ to $D$, and so we can renormalise $h$ to assume this is true. It then follows that 
\begin{align*} \chi \Pi^*f_0 - h_0 \in C^{4,\alpha}_{\eta}
\end{align*}
for some $\eta < 0$. Hence 
\begin{align*} \psi_f - h \in C^{4,\alpha}_{\eta}
\end{align*}
for some $\eta < 0$, too.

Conversely, suppose $\psi_f - h \in C^{4,\alpha}_{\eta}$ for some $\eta <0$. Then the associated vector fields are also equal to order $e^{\eta t}$ and so their restrictions to $D$ must be equal. 
\end{proof}

A final consequence of our explicit analysis that we want to mention now is that the extremal vector field of an extremal Poincar\'e type metric is the restriction to $X \setminus D$ of a vector field on $X$ tangent to $D$.
\begin{lem}\label{extvfistangent} Let $\omega \in \Omega$ be Poincar\'e type metric on $X \setminus D$. Then $$S(\omega) \in C^{k,\alpha} (X \setminus D)$$ for any $k$ and $\alpha$. In particular, if $\omega$ is an extremal metric, then $S(\omega) \in \overline{\mathfrak{h}_{//}^{D}}$.
\end{lem}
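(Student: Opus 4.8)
The plan is to first establish the regularity claim $S(\omega)\in C^{k,\alpha}(X\setminus D)$ for all $k,\alpha$, and then deduce the structural statement about the extremal vector field.

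For the regularity, I would argue as follows. The scalar curvature $S(\omega)$ of a Poincar\'e type metric $\omega$ in the class $\Omega$ is known to be a bounded function on $X\setminus D$; this follows from the fact that $\omega=\omega_0+i\partial\bar\partial\varphi$ with $\varphi=O(f)$ and $|\nabla^j_{\omega_f}\varphi|$ bounded for $j\geq 1$, so the curvature expression is controlled, together with Auvray's asymptotic expansion \eqref{metricasymptotics} which guarantees $S(\omega)$ approaches the (smooth, bounded) scalar curvature of the metric $g_D$ on $D$ plus lower order terms decaying like $e^{-\eta t}$. In fact one gets more: $S(\omega)-\pi^*S(g_D)=O(e^{-\eta t})$ to all orders in the quasi-coordinates, where $\pi^*S(g_D)$ is the pullback of the (smooth) scalar curvature on $D$. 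Since $\pi^* S(g_D)$ is manifestly in $C^{k,\alpha}(X\setminus D)$ (being the pullback of a smooth function on the compact $D$ composed with the tubular projection, which is uniformly controlled in quasi-coordinates) and the remainder lies in $C^{k,\alpha}_{-\eta}(X\setminus D)\subseteq C^{k,\alpha}(X\setminus D)$ for $\eta>0$, we conclude $S(\omega)\in C^{k,\alpha}(X\setminus D)$ for every $k$ and $\alpha$. Alternatively, and perhaps more cleanly, one can use the decomposition \eqref{fndecomp}: write $S(\omega)=S_0+S^\perp$; the $S^1$-invariant part $S_0$ is identified with a function on the cylinder $[A,\infty)\times D$, and the expansion \eqref{metricasymptotics} shows $S_0$ converges in $C^{k,\alpha}$ on the cylinder to $S(g_D)$, hence lies in $C^{k,\alpha}([A,\infty)\times D)$, which by the equivalence of cylindrical and Poincar\'e type H\"older norms (\cite[Lemma 6.7]{sektnanthesis}) means $S_0\in C^{k,\alpha}(X\setminus D)$; the orthogonal part $S^\perp$ decays even faster by the averaging argument used in the proof of Proposition \ref{globalw2kestimate}, so it too is in $C^{k,\alpha}(X\setminus D)$.

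For the second statement, suppose $\omega$ is extremal, so that the gradient vector field $\mathfrak{X}_{S(\omega)}=\bar\partial^\#S(\omega)$ associated to the scalar curvature is a real holomorphic vector field on $X\setminus D$. Since $S(\omega)\in C^{k,\alpha}(X\setminus D)\subseteq C^{k,\alpha}_\eta(X\setminus D)$ for any $\eta>0$, and $\mathcal{D}_\omega S(\omega)=0$ because $\omega$ is extremal, we have $S(\omega)\in\textnormal{Ker }\mathcal{D}^*_\omega\mathcal{D}_\omega$ acting on $C^{4,\alpha}_\eta(X\setminus D)$ for $\eta\in(0,1)$. By Proposition \ref{kercokerlem}, $\textnormal{Ker }\mathcal{D}^*\mathcal{D}_{C^{4,\alpha}_\eta}=\overline{\mathfrak{h}^D_{//}}$ for $\eta\in(0,1)$, which is exactly the space of potentials (with zeros) for real holomorphic vector fields on $X$ tangent to $D$. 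Hence $S(\omega)\in\overline{\mathfrak{h}^D_{//}}$, i.e., up to normalisation it is the potential of such a vector field, and the extremal vector field $\mathfrak{X}_{S(\omega)}$ is the restriction to $X\setminus D$ of a holomorphic vector field on $X$ tangent to $D$.

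The main obstacle is the first step: rigorously controlling the asymptotics of $S(\omega)$ near $D$, i.e. showing that $S(\omega)$ genuinely lies in the unweighted H\"older space and not merely that it is bounded. This requires carefully feeding Auvray's all-order expansion \eqref{metricasymptotics} of the metric into the scalar curvature formula and tracking how the $O(e^{-\eta t})$ error in the metric propagates (via the inverse metric, Christoffel symbols, and the Ricci contraction, each of which involves at most two derivatives) to an $O(e^{-\eta t})$ error in $S(\omega)$, uniformly in the quasi-coordinate charts; the leading term must be recognised as the pullback of the smooth object $S(g_D)$ on $D$ so that it sits in $C^{k,\alpha}(X\setminus D)$. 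Once this asymptotic bookkeeping is done, the passage to the structural conclusion is immediate from Proposition \ref{kercokerlem}.
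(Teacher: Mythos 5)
Your treatment of the ``in particular'' clause is correct and essentially identical to the paper's: once $S(\omega)\in C^{k,\alpha}(X\setminus D)\subseteq C^{4,\alpha}_{\eta}$ for $\eta>0$, extremality gives $\mathcal{D}^*\mathcal{D}\,S(\omega)=0$, and Proposition~\ref{kercokerlem} for $\eta\in(0,1)$ (the paper uses $\eta=\tfrac12$) identifies the kernel with $\overline{\mathfrak{h}_{//}^D}$.

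The gap is in how you establish $S(\omega)\in C^{k,\alpha}(X\setminus D)$ in the first place. Both of your routes (expanding $S(\omega)-\pi^*S(g_D)=O(e^{-\eta t})$, or the cylindrical-norm argument on $S_0$ and $S^\perp$) make essential use of the expansion~\eqref{metricasymptotics}. But~\eqref{metricasymptotics} is an \emph{additional hypothesis} introduced at the start of Section~\ref{fredholmpropssect} (``Assumption on the metric''), not a consequence of Definition~\ref{poincaretypemetdefn}. It is guaranteed for extremal Poincar\'e type metrics by Auvray's Theorem~4.8, but the first conclusion of Lemma~\ref{extvfistangent} is stated for an arbitrary Poincar\'e type metric $\omega\in\Omega$, and your argument does not cover that generality. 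The paper sidesteps this: it cites Auvray's Proposition~1.6, which asserts for any Poincar\'e type metric that the Ricci form $\rho_\omega$ lies in $C^\infty(\Lambda^{1,1},X\setminus D)$, notes that $\omega$ itself is also bounded at all orders by definition, and then observes that $S(\omega)=n\,\rho_\omega\wedge\omega^{n-1}/\omega^n$ is a quotient of two bounded-at-all-orders top forms whose denominator is uniformly bounded away from zero by quasi-isometry. This three-line argument needs none of the refined asymptotics and applies to the full class of Poincar\'e type metrics. Your opening assertion that boundedness of $S(\omega)$ ``follows from $\varphi=O(f)$ and $|\nabla^j_{\omega_f}\varphi|$ bounded'' is in the right spirit but would still need to be turned into a concrete statement about $\rho_\omega$ and $\omega^{-1}$ to be a proof; citing Auvray's Proposition~1.6 is the cleanest way to do so.
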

\begin{proof} Auvray showed in \cite[Prop. 1.6]{auvray17} that the Ricci form $\rho_{\omega}$ associated to $\omega$ is bounded at any order, i.e. lies in the space $C^{\infty}(\Lambda^{1,1}, X\setminus D)$. Similarly, so does $\omega$, by definition of a Poincar\'e type metric. Hence both $\rho_{\omega} \wedge \omega^{n-1}$ and $\omega^n$ lie in $C^{\infty}(\Lambda^{n,n}, X\setminus D)$, and so the scalar curvature function 
\begin{align*} S(\omega) = \frac{n \rho_{\omega} \wedge \omega^{n-1}}{\omega^n}
\end{align*}
lies in $C^{\infty}(X\setminus D)$, as required. It therefore follows that if $\omega$ is extremal, $S(\omega)$ lies in the kernel of the Lichnerowicz operator and by the above also in $C^{k,\alpha}_{\eta}$ for any $\eta \geq 0$. Taking e.g. $\eta = \frac{1}{2}$, Proposition \ref{kercokerlem} then implies that $S(\omega) \in \overline{\mathfrak{h}_{//}^{D}}$.
\end{proof}

\subsection{The modified H\"older spaces}\label{modifiedholder}

Since we work with H\"older spaces in which not all of the potentials for holomorphic vector fields are contained, it will be convenient to modify these spaces slightly, which we do in this section. In general we could pullback functions $f$ from $D$ to $X \setminus D$ by using the tubular neighbourhood discussed in Section \ref{fnspaces}. We choose a cutoff function $\chi$ only depending on the variable $t$ and consider $\chi \Pi^* (f )$, where $\Pi$ is the (local) fibration map. These functions all lie in $C^{k,\alpha}_0 (X \setminus D)$ if $f \in C^{k,\alpha} (D)$, and we will need to include some of these functions when solving the blow-up problem.

We begin with a Lemma which finds a function whose image via the Lichnerowicz operator is the pulled back function, for functions on $D$ that are potentials for holomorphic vector fields on $D$.
\begin{lem}\label{pullbackperturb}Let $\omega$ be a Poincar\'e type metric on $X \setminus D$ satisfying \eqref{metricasymptotics}. Then there exists $\eta < 0$ such that for all $\tilde{f} \in \textnormal{Ker }  \mathcal{D}_D^* \mathcal{D}_{D}$ there exists $\sigma \in C^{0,\alpha}_{\eta} (X \setminus D), \phi \in C^{4,\alpha} (D)$ and $f \in \textnormal{Ker }  \mathcal{D}_D^* \mathcal{D}_{D}$ such that
\begin{align}\label{perturbtoetaspace} \mathcal{D}^*_{\omega} \mathcal{D}_{\omega} (\chi \Pi^* (\phi) +  t \chi \Pi^*f ) = \chi \Pi^*\tilde{f} + \sigma.
\end{align}
Moreover, $f$ is unique and $\phi$ is unique up to an element of $\textnormal{Ker } \mathcal{D}_D^* \mathcal{D}_D$. Finally, if $\widetilde{f} = 1$, we can take $f=1$ and $\phi =0$.
\end{lem}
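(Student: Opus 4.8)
\textbf{Proof proposal for Lemma \ref{pullbackperturb}.}

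The plan is to compute $\mathcal{D}^*_{\omega}\mathcal{D}_{\omega}$ applied to functions of the form $\chi\Pi^*(\phi)$ and $t\chi\Pi^*(f)$ near $D$, using the asymptotic expansion \eqref{metricasymptotics} of the metric together with the model operator \eqref{modellich}, and then match the leading-order terms so that the error lands in a negatively-weighted space. First I would work in the tubular neighbourhood $\mathcal{N}$ where the metric is $a(dt^2 + e^{-2t}\vartheta^2) + \pi^*g_D + O(e^{-\eta t})$; by the structure of the model Lichnerowicz operator, for an $S^1$-invariant function that is pulled back from $D$ (hence independent of $t$), the operator $\mathcal{D}^*_\omega\mathcal{D}_\omega$ reduces, modulo $O(e^{-\eta t})$ corrections, to an operator whose leading part in the $t$-independent sector is the $D$-direction piece $\mathcal{D}^*_D\mathcal{D}_D$ plus, crucially, terms coming from the $\partial_t$-derivatives hitting the cutoff $\chi$ (which are compactly supported in $t$ and so contribute elements already in $C^{0,\alpha}_\eta$ for $\eta<0$) and terms produced because the metric is only asymptotically a product. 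The key computation is that applying $\mathcal{D}^*_\omega\mathcal{D}_\omega$ to $t\,\chi\Pi^*f$ for $f\in\ker\mathcal{D}^*_D\mathcal{D}_D$ produces, from the commutator of the operator with multiplication by $t$, a term proportional to $\chi\Pi^*f$ of the form $c\,\chi\Pi^*f$ plus lower-order/faster-decaying pieces; here I expect the relevant coefficient to be read off from the $-\tfrac12(\partial_t^2 - \partial_t)$ and $-\big(\partial_t^2-\partial_t\big)\Delta_D$ blocks of \eqref{modellich} acting on $t\cdot(\text{const in }t)$, giving a nonzero constant times $f$ (since $(\partial_t^2 - \partial_t)(t) = -1$).

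The argument then runs as follows. Given $\tilde f \in \ker\mathcal{D}^*_D\mathcal{D}_D$, I would first choose $f\in\ker\mathcal{D}^*_D\mathcal{D}_D$ so that the $\chi\Pi^*f$-term produced by $\mathcal{D}^*_\omega\mathcal{D}_\omega(t\chi\Pi^*f)$ matches the given $\chi\Pi^*\tilde f$ up to an element of the image of $\mathcal{D}^*_D\mathcal{D}_D$ (acting on functions on $D$) and up to $C^{0,\alpha}_\eta$-errors; solvability here is exactly the statement that $\tilde f$ decomposed via $\ker\mathcal{D}^*_D\mathcal{D}_D \oplus \operatorname{Im}\mathcal{D}^*_D\mathcal{D}_D$ determines the $\ker$-component, which fixes $f$ uniquely. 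Then I would choose $\phi\in C^{4,\alpha}(D)$ to solve $\mathcal{D}^*_D\mathcal{D}_D\phi = (\text{the }\operatorname{Im}\text{-component})$ on $D$, so that $\mathcal{D}^*_\omega\mathcal{D}_\omega(\chi\Pi^*\phi)$ cancels the remaining leading term; $\phi$ is then unique up to $\ker\mathcal{D}^*_D\mathcal{D}_D$. Adding the two contributions, all leading terms cancel against $\chi\Pi^*\tilde f$, and everything left over is either compactly supported in $X\setminus D$ (from derivatives of $\chi$, hence trivially in $C^{0,\alpha}_\eta$ for $\eta<0$) or is $O(e^{-\eta t})$ from the metric error in \eqref{metricasymptotics}; collecting these as $\sigma\in C^{0,\alpha}_\eta(X\setminus D)$ for a suitable $\eta<0$ completes \eqref{perturbtoetaspace}. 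For the final clause: if $\tilde f = 1$ then $1\in\ker\mathcal{D}^*_D\mathcal{D}_D$ is already a constant, $\mathcal{D}^*_\omega\mathcal{D}_\omega(\chi\Pi^*1) = \mathcal{D}^*_\omega\mathcal{D}_\omega(\chi)$ is supported where $\chi$ is non-constant in $t$ hence lies in $C^{0,\alpha}_\eta$ with $\eta<0$, so taking $f=1$ forces no $t$-weighted correction and $\phi = 0$ works, with the whole expression being $\chi\Pi^*1 + \sigma$ after absorbing $\mathcal{D}^*_\omega\mathcal{D}_\omega(\chi)$ into $\sigma$; more precisely one checks $\chi\Pi^*1 = \chi$ and $\mathcal{D}^*_\omega\mathcal{D}_\omega(t\chi) - (\text{leading }\chi\text{-term})$ is already $C^{0,\alpha}_\eta$, consistent with the normalisation.

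I expect the main obstacle to be the bookkeeping of exactly which coefficient appears in front of $\chi\Pi^*f$ when $\mathcal{D}^*_\omega\mathcal{D}_\omega$ hits $t\chi\Pi^*f$: one must verify this coefficient is nonzero (so that the matching in the first step is actually possible for arbitrary $\tilde f$) and that the subleading terms genuinely decay, i.e. are $O(e^{-\eta t})$ rather than $O(1)$. This requires care because $\Pi^*f$ and the model projection $\pi^*f$ differ by $O(e^{-t})$-terms, the coordinate $t$ differs from $\mathfrak{u} = \log(-\log|\sigma|^2)$ by $O(e^{-\mathfrak{u}})$, and the form $\vartheta$ differs from $d\theta$ by $O(1)$ — all of which feed into the error estimate and must be shown not to spoil the weight. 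A secondary point is checking that the term $d\varphi(\Pi^*\mathfrak{X}_f)$ appearing in the definition \eqref{psif} of $\psi_f$ is consistent with the present construction, or rather is not needed here because we are only asking for the weaker conclusion \eqref{perturbtoetaspace} with a $C^{0,\alpha}_\eta$-error rather than an exact solution; I would flag that the genuine $\psi_f$ of \eqref{psif} is the natural candidate but that for this lemma the cruder ansatz $\chi\Pi^*\phi + t\chi\Pi^*f$ suffices once the leading-order cancellation is arranged. The uniqueness statements follow formally from the injectivity of the leading-term map modulo $\ker\mathcal{D}^*_D\mathcal{D}_D$, which is just linear algebra on $D$ once the computation is in hand.
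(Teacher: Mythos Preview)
Your approach is the same as the paper's and is correct in outline, but there is one point where your expectation is slightly off and where the paper supplies the missing computation. The leading term of $\mathcal{D}^*_\omega\mathcal{D}_\omega(t\chi\Pi^*f)$ for $f\in\ker\mathcal{D}^*_D\mathcal{D}_D$ is \emph{not} of the form $c\,\chi\Pi^*f$ for a scalar $c$; the model operator \eqref{modellich} applied to $t\cdot f$ gives (up to positive constants depending on $a$)
\[
L_{\mathrm{mod}}(t\cdot f) = \Pi^*\big(f + \Delta_D f + t\,\mathcal{D}^*_D\mathcal{D}_D f\big) = \Pi^*(f + \Delta_D f),
\]
so the leading output is $f+\Delta_D f$, which in general lies neither in $\ker\mathcal{D}^*_D\mathcal{D}_D$ nor in $\operatorname{Im}\mathcal{D}^*_D\mathcal{D}_D$. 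The solvability you flag as the ``main obstacle'' is therefore not a matter of checking a single coefficient is nonzero, but of showing that the linear map $\ker\mathcal{D}^*_D\mathcal{D}_D \to C^{0,\alpha}(D)/\operatorname{Im}\mathcal{D}^*_D\mathcal{D}_D$ sending $f\mapsto f+\Delta_D f$ is an isomorphism. The paper does this by the integration-by-parts identity
\[
\int_D (f+\Delta_D f)\,f = \int_D |f|^2 + |df|^2 > 0 \quad (f\neq 0),
\]
which shows the map is injective and (pairing against $\ker$) that its image is transverse to $\operatorname{Im}\mathcal{D}^*_D\mathcal{D}_D$; by dimension count it is then onto a complement. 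Once this is in hand, your decomposition $\tilde f = (f+\Delta_D f) + \mathcal{D}^*_D\mathcal{D}_D\phi$ goes through exactly as you describe, and the error terms from \eqref{metricasymptotics} and from derivatives of $\chi$ are handled as you indicate.

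This also clarifies the $\tilde f=1$ clause, which you describe somewhat circuitously: since $\Delta_D 1 = 0$, taking $f=1$ gives leading output exactly $1=\tilde f$, so no $\phi$ is needed.
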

\begin{proof}
We begin with the case of the model metric on $\Delta^* \times D$. Recall from \eqref{modellich} that the Lichnerowicz operator $L_{\textnormal{mod}}$ then is given by
\begin{align*}f &\mapsto  \frac{1}{2} \big( \frac{\partial^2 }{\partial t^2} - \frac{\partial }{\partial t} \big)^2 (f) -  \big( \frac{\partial^2 }{\partial t^2} - \frac{\partial }{\partial t} \big) (\Delta_D f)  - \big( \frac{\partial^2 }{\partial t^2} - \frac{\partial }{\partial t} \big) (f)  + \mathcal{D}_D^* \mathcal{D}_D f.
\end{align*}
Then for the function $t\cdot \Pi^*f$, with $f$ a function on $D$, we get that
\begin{align*} L_{\textnormal{mod}} (t\cdot f) = \Pi^* ( f +  \Delta_D f + t \mathcal{D}_D^* \mathcal{D}_D f ).
\end{align*}
In particular, for all $f\in \textnormal{Ker }  \mathcal{D}_D^* \mathcal{D}_D$, we have
\begin{align*} L_{\textnormal{mod}} (t\cdot \Pi^*f) = \Pi^*(  f + \Delta_D (f)).
\end{align*}  
Now, note that since 
\begin{align*} \int_D(  f +  \Delta_D (f) )\cdot f = \int_D |f|^2 + |df|^2,
\end{align*}
we have that $f + \Delta_D (f) \notin \big( \textnormal{Ker } \mathcal{D}_D^* \mathcal{D}_D \big)^{\perp} = \textnormal{Im }\mathcal{D}_D^* \mathcal{D}_D$.

In fact, if $f_1, \cdots, f_r$ is a basis of $\textnormal{Ker }  \mathcal{D}_D^* \mathcal{D}_D$, then $f_i +  \Delta_D f_i$ form a basis of a complementary space to $\textnormal{Im }\mathcal{D}_D^* \mathcal{D}_D$. This follows by integrating by parts again, since if $\mathcal{D}^* \mathcal{D}_D (\phi)=f + \Delta_D f$, then
\begin{align*}  \int_D  |f|^2 + |df|^2 &= \int_D(f + \Delta_D (f) )\cdot f  \\
&= \int_D \mathcal{D}^*_D \mathcal{D}_D (\phi) f \\
&= \int_D \phi \mathcal{D}^*_D \mathcal{D}_D (f)  \\
&= 0,
\end{align*}
as $f \in \textnormal{Ker } \mathcal{D}^*_D \mathcal{D}_D$. In particular, given any $\psi \in C^{0,\alpha}(D)$, there exists $f \in \textnormal{Ker }  \mathcal{D}^*_D \mathcal{D}_D$ and $\phi \in C^{4,\alpha} (D)$ such that
\begin{align*} \mathcal{D}^*_D \mathcal{D}_D (\phi) +  f +  \Delta_D f= \psi.
\end{align*}

Applying this to $\tilde{f}$ in the model case, we have that there are $\phi$ and $f$ such that
\begin{align*} \Pi^*  \tilde{f} &= \Pi^* (\mathcal{D}^*_D \mathcal{D}_D (\phi) + f + \Delta_D f ) \\
&= \mathcal{D}^* \mathcal{D} (\Pi^* \phi + t \cdot \Pi^* f).
\end{align*}
We can then let $\sigma$ be given by
\begin{align*} \sigma = \chi \mathcal{D}^* \mathcal{D} (\Pi^* \phi + t \cdot \Pi^* f) - \mathcal{D}^* \mathcal{D} (\chi \Pi^* \phi + t \cdot \chi \Pi^* f) ,
\end{align*}
which lies in $C^{0,\alpha}_{\eta}$ for \textit{any} $\eta$, since $\sigma$ vanishes identically in a neigbhourhood of $D$.

So far we assumed that in the assumption \eqref{metricasymptotics} on the asymptotics we had $a=1$. For other values of $a$, we have that there is an $\eta < 0$ such that the estimates \eqref{modellich} hold, but where we are using the model operator for $a \omega_{cusp}$ instead of $\omega_{cusp}$. This affects the coefficients of $f_i$ and $\Delta_D f_i$ above, but it does not affect the conclusion, because we still obtain a positive combination of $f_i$ and $\Delta_D f_i$. In particular $\mathcal{D}^* \mathcal{D} (\chi \Pi^* (\phi) +  t \chi \Pi^*f  )$ agrees with $\mathcal{D}^*\mathcal{D}_{\textnormal{mod}}(\chi \Pi^* (\phi) +  t \chi \Pi^*f )$ up to an element of order $e^{ \eta t}$. Hence we can always solve our equation up to an error of order $e^{\eta t}$ with $\eta <0$, as required. The statement about $\widetilde{f} = 1$ follows because in that case, if we take $f=1$ too, then $\Delta_D (f) =0$.
\end{proof}

 We now decompose $\overline{\mathfrak{h}_D}$ as 
\begin{align}\label{Ddecomp} \overline{\mathfrak{h}_D} = \langle 1 \rangle \oplus V_1 \oplus V_2,
\end{align} 
where $$V_1 = \langle f_1, \cdots, f_s \rangle $$ and $$V_2 = \langle f_{s+1}, \cdots, f_r \rangle. $$ In other words, we have decomposed the potentials for holomorphic vector fields into three pieces: the constants, the potentials for vector fields induced from $X$, and those not induced from $X$, respectively.  Define the linear map $$\varrho : V_1 \oplus V_2 \rightarrow C^{0,\alpha}_{0}$$ by $$\varrho (\widetilde{f} ) = \mathcal{D}^*_{\omega} \mathcal{D}_{\omega} \big( \chi \Pi^* (\phi ) + t \chi \Pi^* (f) \big),$$ where $\phi, f$ are given by Lemma \ref{pullbackperturb} if $\widetilde{f} \in  V_1$, and $$\varrho (\widetilde{f} ) = \mathcal{D}^*_{\omega} \mathcal{D}_{\omega} ( \psi_{\widetilde{f}} ),$$ where $\psi_{\widetilde{f}}$ is given by equation \eqref{psif}, if $\widetilde{f} \in V_2$. 

Note that $$ V_1 \oplus V_2 \cong \mathfrak{h}_D ,$$ by sending a function to the corresponding holomorphic vector field. To slightly ease notation we will in the sequel write that $\varrho$ is a map from $\mathfrak{h}_D$ and we are then using this isomorphism. Under this identification, we then have that the decomposition \eqref{Ddecomp} of $\overline{\mathfrak{h}_D}$ is $\mathbb{R} \times \mathfrak{h}_D$. Also, let $\widetilde{C}^{k,\alpha}_{\eta} = C^{k,\alpha}_{\eta} \oplus \overline{\mathfrak{h}_{//}^D} \subseteq C^{k,\alpha}_0$ for $\eta <0$. It is more convenient for us to map into this space, as it contains all the potentials for holomorphic vector fields. 

As a direct consequence of Proposition \ref{kercokerlem}, we then have
\begin{lem}\label{modifiedkercokerlem} Let $\eta \in (- \kappa ,0)$. Consider the map
\begin{align}\label{phidefn} \Phi  : C^{4,\alpha}_{\eta} (X \setminus D) \times \mathbb{R} \times \mathfrak{h}_D \rightarrow \widetilde{C}^{0,\alpha}_{\eta} (X \setminus D)
\end{align}
given by
\begin{align*}
\Phi (\phi, \lambda, f) = \mathcal{D}_{\omega}^* \mathcal{D}_{\omega} \big( \phi + \lambda t \big) + \varrho (f).
\end{align*}
Then
\begin{align*} \textnormal{Ker } \Phi = \textnormal{Ker } \big( \mathcal{D}^*_{\omega} \mathcal{D}_{\omega} \big)_{C^{4,\alpha}_{\eta}} \times \{0 \} \times \{0 \} 
\end{align*}
and 
\begin{align*}  \textnormal{Im }\Phi = \big( \overline{\mathfrak{h}_{//}^D} \big)^{\perp}.
\end{align*} 
\end{lem}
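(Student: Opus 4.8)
The plan is to deduce Lemma \ref{modifiedkercokerlem} directly from Proposition \ref{kercokerlem} together with Lemma \ref{pullbackperturb}, by unwinding the definition of $\varrho$ and of the space $\widetilde{C}^{0,\alpha}_\eta$. First I would record the elementary observation that, since $\eta \in (-\kappa,0)$, we have $\mathcal{D}^*_\omega \mathcal{D}_\omega(\phi + \lambda t) \in C^{0,\alpha}_\eta$ for $\phi \in C^{4,\alpha}_\eta$ (using that $t$ is in the kernel up to terms of order $e^{\eta t}$ by \eqref{metricasymptotics}, so $\mathcal{D}^*\mathcal{D}(t) \in C^{0,\alpha}_\eta$), and that $\varrho(f)$ lands in $\widetilde{C}^{0,\alpha}_\eta$ by construction: for $f \in V_1$ this is exactly \eqref{perturbtoetaspace} of Lemma \ref{pullbackperturb}, which writes $\varrho(f) = \chi\Pi^*\tilde f + \sigma$ with $\sigma \in C^{0,\alpha}_\eta$ — wait, more precisely it writes $\mathcal{D}^*\mathcal{D}(\chi\Pi^*\phi + t\chi\Pi^* f) = \chi\Pi^*\tilde f + \sigma$, and $\chi\Pi^*\tilde f \in C^{0,\alpha}_0 = \overline{\mathfrak{h}_{//}^D} \oplus \cdots$; for $f \in V_2$ we have $\mathcal{D}^*\mathcal{D}(\psi_{\tilde f}) \in C^{0,\alpha}_{-1} \subseteq C^{0,\alpha}_\eta$ by Auvray's result quoted in the proof of Proposition \ref{kercokerlem}. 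Hence $\Phi$ is well-defined as a map into $\widetilde{C}^{0,\alpha}_\eta$.

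Next I would compute the image. Since $\mathrm{Im}\,\mathcal{D}^*\mathcal{D}_{C^{4,\alpha}_\eta} \subseteq (\overline{\mathfrak{h}_{//}^D})^\perp$ and each $\varrho(f)$ also lies in $(\overline{\mathfrak{h}_{//}^D})^\perp$ (again by the containment statements in Proposition \ref{kercokerlem}, since $\mathcal{D}^*\mathcal{D}$ of anything is $L^2$-orthogonal to the kernel of its formal adjoint when the integration by parts of Lemma \ref{ptintbyparts} applies — and $\overline{\mathfrak{h}_{//}^D}$ is the kernel for positive weights), and since $\mathcal{D}^*\mathcal{D}(t) \perp \overline{\mathfrak{h}_{//}^D}$ as well, we get $\mathrm{Im}\,\Phi \subseteq (\overline{\mathfrak{h}_{//}^D})^\perp$. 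For the reverse inclusion, by the third bullet of Proposition \ref{kercokerlem} we have $C^{0,\alpha}_\eta \cap (\overline{\mathfrak{h}_{//}^D})^\perp = \mathrm{Im}(\mathcal{D}^*\mathcal{D}_{C^{4,\alpha}_\eta}) \oplus \langle \mathcal{D}^*\mathcal{D}(\psi_i) : i \in \{s+1,\dots,r\}\rangle$, i.e. the $V_2$-part of $\varrho$ together with $\mathcal{D}^*\mathcal{D}$ on $C^{4,\alpha}_\eta$ already surjects onto $C^{0,\alpha}_\eta \cap (\overline{\mathfrak{h}_{//}^D})^\perp$. It remains to see that adding the $\lambda t$ term and the $V_1$-part of $\varrho$ lets us reach the rest of $\widetilde{C}^{0,\alpha}_\eta \cap (\overline{\mathfrak{h}_{//}^D})^\perp$, namely representatives of the extra directions coming from the $\overline{\mathfrak{h}_{//}^D}$-summand; here one uses that $\varrho$ restricted to $\langle 1 \rangle \oplus V_1$ (with $\lambda$ playing the role of the coefficient of $1$) produces, modulo $C^{0,\alpha}_\eta$, exactly the functions $\chi\Pi^*\tilde f$ for $\tilde f$ ranging over potentials of vector fields induced from $X$ tangent to $D$, which together with $C^{0,\alpha}_\eta$ span $\widetilde{C}^{0,\alpha}_\eta$ modulo the orthogonal complement constraint. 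One then checks the count matches: the index computation in the proof of Proposition \ref{kercokerlem} together with the dimensions of $V_1, V_2, \langle 1\rangle$ pins down that $\mathrm{Im}\,\Phi$ is exactly $(\overline{\mathfrak{h}_{//}^D})^\perp$ and not smaller.

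For the kernel, suppose $\Phi(\phi,\lambda,f) = 0$. I would argue that the three pieces $\mathcal{D}^*\mathcal{D}(\phi + \lambda t) \in C^{0,\alpha}_\eta$, the $V_1$-part of $\varrho(f)$ (which equals $\chi\Pi^*\tilde f + \sigma$ with $\sigma \in C^{0,\alpha}_\eta$ and $\tilde f$ the image of the $V_1$-component), and the $V_2$-part of $\varrho(f)$ (which lies in $\langle \mathcal{D}^*\mathcal{D}(\psi_i)\rangle$) must separately vanish, by appealing to the directness of the decomposition in the third bullet of Proposition \ref{kercokerlem} — the $V_2$-directions $\mathcal{D}^*\mathcal{D}(\psi_i)$ are linearly independent modulo $\mathrm{Im}\,\mathcal{D}^*\mathcal{D}_{C^{4,\alpha}_\eta}$, forcing the $V_2$-component of $f$ to be zero; and the $\chi\Pi^*\tilde f$ term, being a genuine $\overline{\mathfrak{h}_{//}^D}$-type contribution not in $C^{0,\alpha}_\eta$, forces the $V_1$-component of $f$ and $\lambda$ to vanish (here one invokes Lemma \ref{psiperturblem} / the uniqueness clauses in Lemma \ref{pullbackperturb} to see no cancellation occurs, since $\tilde f = 1$ would require $\lambda = 0$ and the $f_i$, $i\le s$, are independent in the quotient). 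Once $\lambda = 0$ and $f = 0$ we are left with $\mathcal{D}^*\mathcal{D}(\phi) = 0$ with $\phi \in C^{4,\alpha}_\eta$, giving $\mathrm{Ker}\,\Phi = \mathrm{Ker}(\mathcal{D}^*_\omega\mathcal{D}_\omega)_{C^{4,\alpha}_\eta} \times \{0\} \times \{0\}$.

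I expect the main obstacle to be the bookkeeping in the surjectivity step: one must verify that the $\lambda t$ term plus $\varrho|_{V_1}$ contributes exactly the ``missing'' directions in $\widetilde{C}^{0,\alpha}_\eta \cap (\overline{\mathfrak{h}_{//}^D})^\perp$ beyond $C^{0,\alpha}_\eta \cap (\overline{\mathfrak{h}_{//}^D})^\perp$, with no overlap and no deficiency, and this amounts to matching the index count of Proposition \ref{kercokerlem} against the dimensions $\dim V_1 = s$, $\dim V_2 = r - s$, plus the one-dimensional $\lambda$-direction, while keeping careful track of which elements of $\overline{\mathfrak{h}_{//}^D}$ are hit. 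The uniqueness assertions in Lemma \ref{pullbackperturb} (that $f$ is unique and $\phi$ unique up to $\mathrm{Ker}\,\mathcal{D}^*_D\mathcal{D}_D$) are exactly what make this bookkeeping go through cleanly, so the proof is essentially an exercise in assembling these pieces rather than proving anything genuinely new.
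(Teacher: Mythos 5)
Your argument is in substance the one the paper intends (the paper offers no proof, simply calling the lemma a ``direct consequence'' of Proposition~\ref{kercokerlem}), and the strategy --- integration by parts for $\mathrm{Im}\,\Phi \subseteq \big(\overline{\mathfrak{h}_{//}^D}\big)^{\perp}$, then a decomposition into a $C^{0,\alpha}_{\eta}$-part (handled by the third bullet of Proposition~\ref{kercokerlem}) and a quotient $\widetilde{C}^{0,\alpha}_{\eta}/C^{0,\alpha}_{\eta}$-part (hit by $\lambda t$ plus $\varrho|_{V_1}$), with Lemma~\ref{pullbackperturb} and Lemma~\ref{psiperturblem} controlling injectivity of the finite-dimensional pieces --- is the right one and does close the argument.

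Two points worth tightening. First, the assertion in the kernel step that ``$\mathcal{D}^*\mathcal{D}(\phi+\lambda t) \in C^{0,\alpha}_{\eta}$'' is false for $\lambda \ne 0$: by Lemma~\ref{pullbackperturb} (the $\widetilde{f}=1$ case), $\mathcal{D}^*\mathcal{D}(t) = \chi + \sigma$ with $\sigma \in C^{0,\alpha}_{\eta}$, so $\mathcal{D}^*\mathcal{D}(\lambda t)$ has a nonzero constant leading term and hence lies in $\widetilde{C}^{0,\alpha}_{\eta}$ but \emph{not} in $C^{0,\alpha}_{\eta}$ unless $\lambda = 0$. The cleaner way to organise the kernel argument --- which your later sentence about ``$\widetilde{f}=1$ would require $\lambda=0$'' implicitly does --- is to project $\Phi(\phi,\lambda,f)=0$ onto $\widetilde{C}^{0,\alpha}_{\eta}/C^{0,\alpha}_{\eta}$, kill $(\lambda, f_{1})$ there by injectivity of $g \mapsto [\chi\Pi^*g]$ from $\langle 1\rangle \oplus V_1$ (both sides have dimension $s+1$, since $\overline{\mathfrak{h}_{//}^{D}} \cap C^{0,\alpha}_{\eta}$ is exactly the $m$-dimensional kernel of $\mathcal{D}^*\mathcal{D}_{C^{4,\alpha}_{\eta}}$, and $\dim\overline{\mathfrak{h}_{//}^{D}} = m+s+1$), and then apply the direct-sum statement of Proposition~\ref{kercokerlem} inside $C^{0,\alpha}_{\eta}$ to kill $f_2 \in V_2$. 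Second, the well-definedness paragraph should spell out why $\chi\Pi^*\widetilde{f} \in \widetilde{C}^{0,\alpha}_{\eta}$ for $\widetilde{f} \in V_1$: by the proof of Lemma~\ref{psiperturblem}, $\chi\Pi^*\widetilde{f} - h_0 \in C^{0,\alpha}_{\eta}$ where $h_0$ is the $\omega_0$-holomorphy potential, while $\overline{\mathfrak{h}_{//}^{D}}$ consists of $\omega$-potentials $h = h_0 + d\varphi(Z)$; the needed conclusion therefore rests on the observation that $d\varphi(Z) = O(e^{-t})$ for $Z \in \mathfrak{h}_{//}^{D}$ (in local coordinates both the $z_1 a_1\,\partial_{z_1}$ and the $a_i\,\partial_{z_i}$, $i\ge 2$, contributions to $d\varphi(Z)$ decay like $1/|\log|z_1||$), so $h_0$ and $h$ agree modulo $C^{0,\alpha}_{\eta}$. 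Neither issue is fatal, but both deserve a sentence.
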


\section{Linear theory in doubly weighted spaces}\label{lintheory}

In this section we analyse the Fredholm theory of the Lichnerowicz operator in \textit{doubly weighted spaces}, H\"older spaces with weighted norm both near the divisor as discussed earlier, as well as near the blown-up point, with the weight function being the distance to the blown-up point. 

When solving the extremal equation on the blow-up, in order to get uniform estimates, we want to use the radius function around the points as a weight function. We now define a doubly weighted norm on the complement of the points in $X \setminus{D}$ that are to be blown up. Pick $T$-invariant normal coordinates $\underline{z}^j$ at $p_j$, which after scaling can be assumed to be defined for when the norm is at most $2$. We define the doubly weighted H\"older norm $C^{k,\alpha}_{\delta, \eta}$ on $Y = X \setminus \big(D \cup \{p_1, \cdots, p_k \} \big)$ to be
\begin{align*} \| \phi \|_{C^{k,\alpha}_{\delta,\eta} (Y)} = \| \phi \|_{C^{k,\alpha}_{\eta} (V), \omega) } + \sum_i \textnormal{sup}_{r\in(0,\frac{1}{2})} r^{- \delta} \| \phi \|_{C^{k,\alpha}(B_{2r} \setminus B_{r} (p_j), r^{-2} \omega)} ,
\end{align*}
where $V $ is the complement to $\cup_j B_{\frac{1}{2}} (p_j)$ in $X \setminus D$. Here in e.g. $C^{k,\alpha}(B_{2r} \setminus B_{r}, r^{-2} \omega)$, the second entry denotes the metric we are using to compute norms with. Also we let $r_{\varepsilon} = \varepsilon^{\frac{2n-1}{2n+1}}$. 

We then have a similar Fredholm result to Theorem \ref{ptfredholmthm} for the doubly weighted spaces. 
\begin{thm}\label{dwtdfredholm} Let $\omega$ be a Poincar\'e type metric on $X \setminus D$ satisfying equation \ref{metricasymptotics}. Suppose $(\delta,\eta)$ are weights such that $\delta$ is not an indicial root of $\Delta^2$ on $\mathbb{C}^n \setminus \{ 0 \}$, i.e. $\delta \notin \mathbb{Z} \setminus (4-2n,0)$, and $\eta$ is not an indicial root of $\mathcal{D}^*\mathcal{D}$ on the Poincar\'e type weighted spaces. Then 
\begin{align*} \mathcal{D}_{\omega}^* \mathcal{D}_{\omega} : C^{k+4,\alpha}_{\delta, \eta} (Y) \rightarrow C^{k,\alpha}_{\delta-4,\eta} (Y)
\end{align*}
 is Fredholm. Moreover, 
\begin{align}\label{doubleimageintermsofker} \textnormal{Im } \mathcal{D}^* \mathcal{D}_{C^{k+4, \alpha}_{\delta, \eta}} = ( \textnormal{Ker} (\mathcal{D}^*\mathcal{D}_{C^{k+4,\alpha}_{4-2n-\delta,1 - \eta}} ) )^{\perp},
\end{align}
where $\perp$ denotes the orthogonal complement with respect to the $L^2$-inner product and subscripts denote the domains of the operators.
\end{thm}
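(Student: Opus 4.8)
The plan is to combine the single-weight Fredholm theory already established in Theorem \ref{ptfredholmthm} with the standard Fredholm theory for $\Delta^2$ on $\mathbb{C}^n \setminus \{0\}$ in weighted H\"older spaces, glued together via a parametrix patching argument. Concretely, on the piece $V$ away from the blown-up points the operator $\mathcal{D}^*\mathcal{D}$ is just the Poincar\'e type operator of Section \ref{fredholmpropssect}, so there Theorem \ref{ptfredholmthm} gives an estimate of the form $\|\phi\|_{C^{k+4,\alpha}_\eta} \leq c(\|\mathcal{D}^*\mathcal{D}\phi\|_{C^{k,\alpha}_\eta} + \|\phi\|_{L^2(K)})$ for a compact $K$; near each puncture $p_j$, after rescaling by $r^{-2}$, the leading part of $\mathcal{D}^*\mathcal{D}$ is $\frac{1}{2}\Delta^2$ on the punctured ball, for which one has the standard scaled Schauder estimate $\|\phi\|_{C^{k+4,\alpha}(B_{2r}\setminus B_r, r^{-2}\omega)} \leq c(r^4\|\Delta^2\phi\|_{C^{k,\alpha}} + \|\phi\|_{C^0})$ with $c$ independent of $r$, valid precisely when $\delta$ avoids the indicial roots $\mathbb{Z}\setminus(4-2n,0)$ of $\Delta^2$. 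First I would state and prove the a priori estimate $\|\phi\|_{C^{k+4,\alpha}_{\delta,\eta}(Y)} \leq c(\|\mathcal{D}^*\mathcal{D}\phi\|_{C^{k,\alpha}_{\delta-4,\eta}(Y)} + \|\phi\|_{L^2(K)})$ for a fixed compact $K\subseteq Y$, by partitioning $Y$ into the region $V$, the annuli near the $p_j$, and a transition region, and summing the local estimates (the perturbation terms coming from the difference between $\mathcal{D}^*\mathcal{D}$ and its model near each puncture are lower order in $r$ and absorbed, exactly as in Arezzo--Pacard--Singer).

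From this weighted estimate, the finite-dimensionality of the kernel and the closedness of the image follow by the usual Rellich-compactness/contradiction argument (compare \cite[Ch. 9]{pacardnotes}): if the estimate failed without the compact error term one extracts a sequence, normalizes, and passes to a limit using compactness of the embedding $C^{k+4,\alpha}_{\delta,\eta} \hookrightarrow C^{k+4,\alpha}_{\delta',\eta'}$ for slightly worse weights (which holds because near the punctures the weight gains and near $D$ Lemma \ref{normcomparisonw2klem} applies). This gives that $\mathcal{D}^*\mathcal{D}: C^{k+4,\alpha}_{\delta,\eta}(Y) \to C^{k,\alpha}_{\delta-4,\eta}(Y)$ is semi-Fredholm with closed range. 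The same applies to the operator with the conjugate weights $(4-2n-\delta, 1-\eta)$, which gives finite-dimensional kernel there.

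For the image characterization \eqref{doubleimageintermsofker}, I would run the dual argument of Section \ref{ptfredholmthmpf}. The key inputs are: (i) a regularity statement analogous to Proposition \ref{ptholderreg} in the doubly weighted setting — if $f$ lies in the appropriate weighted $L^2$ space and $\mathcal{D}^*\mathcal{D}f \in C^{k,\alpha}_{\delta-4,\eta}$ distributionally, then $f \in C^{k+4,\alpha}_{\delta,\eta}$ with the corresponding estimate — proved near the punctures by scaled elliptic regularity for $\Delta^2$ and near $D$ exactly as before; (ii) the identification, via the $L^2$ pairing, of the cokernel of $\mathcal{D}^*\mathcal{D}$ on $C^{k+4,\alpha}_{\delta,\eta}$ with the kernel of the formal adjoint on the dual weighted space, which by formal self-adjointness and the regularity statement is $\textnormal{Ker}(\mathcal{D}^*\mathcal{D}_{C^{k+4,\alpha}_{4-2n-\delta,1-\eta}})$. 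One must be careful with the weight bookkeeping: on the divisor side the dual of $L^2_\eta$ is $L^2_{-\eta}$ and the regularity pushes a function in $L^2_{-\eta}$ into $C^{k+4,\alpha}_{1/2-(-\eta)-\varepsilon} = C^{k+4,\alpha}_{1/2+\eta-\varepsilon}$ — wait, the relevant exponent should land at $1-\eta$; indeed as in Section \ref{ptfredholmthmpf} one applies the regularity with $\delta = \eta - \tfrac12 + \varepsilon$ so the conjugate weight near $D$ is $1-\eta$, and near the punctures $L^2(\mathbb{C}^n \setminus\{0\})$-duality converts the weight $\delta$ into its reflection $4-2n-\delta$ about the middle of the non-$L^2$-critical interval. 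Once the indicial roots are avoided, $\textnormal{Ker}(\mathcal{D}^*\mathcal{D}_{C^{k+4,\alpha}_{4-2n-\delta,1-\eta}})$ is stable under small perturbation of both weights, so the $\varepsilon$ can be sent to $0$ to obtain \eqref{doubleimageintermsofker}; finite-dimensionality of this kernel then yields the Fredholm property.

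The main obstacle I anticipate is the bookkeeping at the two distinct types of ends simultaneously: making sure the compact-error contradiction argument uses a single compactness statement valid for both the cusp direction (where Lemma \ref{normcomparisonw2klem} and the tubular-neighbourhood decomposition \eqref{fndecomp} control things) and the puncture direction (where the scaled norms degenerate), and correctly tracking that the $L^2$-duality near $D$ sends $\eta \mapsto 1-\eta$ while near $p_j$ it sends $\delta \mapsto 4-2n-\delta$, so that the single orthogonal-complement statement \eqref{doubleimageintermsofker} comes out with exactly the stated pair of conjugate weights. The analytic estimates themselves are by now routine given Theorem \ref{ptfredholmthm} and the $\mathbb{C}^n$ theory; it is the patching and the index/weight reflections that require care.
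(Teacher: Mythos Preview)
Your proposal is correct and follows essentially the same approach as the paper: the paper's argument is a brief sketch indicating that one uses cut-off functions to split a function on $Y$ into a component governed by the Poincar\'e type theory of Section~\ref{fredholmpropssect} and a component governed by the standard weighted theory on the punctured manifold $X \setminus \{p_1,\dots,p_k\}$ (citing Lockhart--McOwen), establishes doubly weighted analogues of Propositions~\ref{globalw2kestimate} and~\ref{ptholderreg}, and then reruns the duality argument of Section~\ref{ptfredholmthmpf}. Your more detailed outline --- local estimates plus patching to get the a priori estimate, the Rellich-type contradiction for semi-Fredholmness, and the $L^2$-duality/regularity argument with the weight reflections $\eta \mapsto 1-\eta$ and $\delta \mapsto 4-2n-\delta$ --- is exactly this programme spelled out, and your identification of the weight bookkeeping at the two ends as the only delicate point matches the paper's emphasis.
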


This follows from the Fredholm theory of Section \ref{fredholmpropssect} together with that of weighted spaces of punctured compact manifolds, see e.g. \cite{lockhartmcowen85}. Indeed, one could use cut-off functions to view a function on $Y$ has having a component on $X \setminus D$ that lies in $C^{k+4, \alpha}_\eta (X \setminus D)$ and a component on $X \setminus \{ p_1, \cdots, p_k \}$ in $C^{4,\alpha}_{\delta} (X \setminus \{ p_1, \cdots, p_k \})$, or the relevant weighted Sobolev spaces. From this one can establish estimates similar to those of Propositions \ref{globalw2kestimate} and \ref{ptholderreg}. Then the result follows by going through the argument of Section \ref{ptfredholmthmpf} again. Note that on the weighted spaces for $X \setminus \{ p_1, \cdots, p_k \}$, the Lichnerowicz operator has a similar characterisation of its image in terms of the orthogonal complement to a complementary weight. There the image of $C^{k+4, \alpha}_{\delta}$ is the orthogonal complement to the kernel of the Lichnerowicz operator acting on $C^{k+4, \alpha}_{4-2n-\delta}$.

We end this section with characterising the (co)-kernel for the weights relevant to us. We can also define modified doubly weighted H\"older spaces, like in Section \ref{modifiedholder}. If we pick the cut-off function $\chi$ to be $0$ sufficiently close the blow-up points, then these functions will not interact with the $\delta$ weights. We can then define the map $$\Phi : C^{4,\alpha}_{\delta, \eta} \times \overline{\mathfrak{h}_D} \rightarrow \widetilde{C}^{0,\alpha}_{\delta - 4, \eta}$$ as in equation \eqref{phidefn}. 
\begin{prop}\label{dwtdexplicit} Suppose that $\delta \in (4-2n, 0)$ and that $\eta \in (- \kappa, 0)$.Then 
\begin{align*}
\textnormal{Ker } \mathcal{D}^* \mathcal{D}_{C^{4,\alpha}_{\eta}} &\subseteq \overline{\mathfrak{h}_0^{D}} \textnormal{ and is of codimension }1 , \\
 C^{0,\alpha}_{\eta} \cap   \overline{\mathfrak{h}_{//}^D}^{\perp} &= \textnormal{ Im} (\mathcal{D}^* \mathcal{D}_{C^{4,\alpha}_{\eta}}) \oplus \langle \mathcal{D}^* \mathcal{D} ( \psi_i ): i \in \{1, \cdots, s \} \rangle  \\
\textnormal{Ker } \Phi &= \textnormal{Ker } \big( \mathcal{D}^*_{\omega} \mathcal{D}_{\omega} \big)_{C^{4,\alpha}_{\eta}} \times \{0 \}  \\
\textnormal{Im }\Phi &= \big( \overline{\mathfrak{h}_{//}^D} \big)^{\perp}.
\end{align*} 
\end{prop}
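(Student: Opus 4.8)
The plan is to reduce Proposition \ref{dwtdexplicit} to the single-weight statement of Proposition \ref{kercokerlem} by decoupling the two weight regimes, and then to re-run the Fredholm index bookkeeping already used in Section \ref{analyseweights}. First I would observe that the kernel statement is purely local near $D$: if $f \in \textnormal{Ker }\mathcal{D}^*\mathcal{D}_{C^{4,\alpha}_{\delta,\eta}}$ with $\delta \in (4-2n,0)$, then by interior elliptic regularity $f$ extends across the punctures $p_1,\dots,p_k$ (the weight $\delta<0$ is above the bottom indicial root $4-2n$ for $\Delta^2$ on $\mathbb{C}^n\setminus\{0\}$, and no positive integer indicial root lies in $(4-2n,0)$, so no genuine singular behaviour is allowed), giving $f \in \textnormal{Ker }\mathcal{D}^*\mathcal{D}_{C^{4,\alpha}_{\eta}}(X\setminus D)$. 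Conversely any element of the latter kernel lies in $C^{4,\alpha}_{\delta,\eta}$ since it is smooth near each $p_j$ and $\delta<0$. Hence $\textnormal{Ker }\mathcal{D}^*\mathcal{D}_{C^{4,\alpha}_{\delta,\eta}} = \textnormal{Ker }\mathcal{D}^*\mathcal{D}_{C^{4,\alpha}_{\eta}}$, and the first line of the proposition is then exactly the second line of Proposition \ref{kercokerlem}. The $\textnormal{Ker }\Phi$ statement follows identically, since the extra summands ($\mathbb{R}$ and $\mathfrak{h}_D$, or equivalently $\overline{\mathfrak{h}_D}$) and the operator $\varrho$ are unchanged from the single-weight setting, and the cut-off $\chi$ is chosen to vanish near the $p_j$, so these added functions are genuinely in $C^{4,\alpha}_{\delta,\eta}$ for any $\delta<0$.

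Next I would handle the image. Apply Theorem \ref{dwtdfredholm}: $\textnormal{Im }\mathcal{D}^*\mathcal{D}_{C^{4,\alpha}_{\delta,\eta}} = \big(\textnormal{Ker }\mathcal{D}^*\mathcal{D}_{C^{4,\alpha}_{4-2n-\delta,1-\eta}}\big)^{\perp}$. Now $4-2n-\delta \in (4-2n,0)$ as well (since $\delta\in(4-2n,0)$), so by the same decoupling argument the kernel at weights $(4-2n-\delta,1-\eta)$ equals $\textnormal{Ker }\mathcal{D}^*\mathcal{D}_{C^{4,\alpha}_{1-\eta}}(X\setminus D)$, which since $1-\eta \in (1,1+\kappa)$... here I must be a little careful: $\eta\in(-\kappa,0)$ gives $1-\eta\in(1,1+\kappa)$, not $(0,1)$. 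But Theorem \ref{ptfredholmthm} with $\eta\in(-\kappa,0)$ already identifies the cokernel at weight $\eta$ with $\textnormal{Ker }\mathcal{D}^*\mathcal{D}_{C^{4,\alpha}_{1-\eta}}$, and Proposition \ref{kercokerlem} computes the image at weight $\eta$ directly as $C^{0,\alpha}_\eta \cap \overline{\mathfrak{h}_{//}^D}^\perp$ minus the span of the $\mathcal{D}^*\mathcal{D}(\psi_i)$ for $i\in\{s+1,\dots,r\}$; so I would instead feed the known single-weight image description through Theorem \ref{dwtdfredholm}'s proof structure rather than recomputing the dual kernel. Concretely: the punctures at $p_j$ contribute nothing new to the cokernel in the weight range $\delta\in(4-2n,0)$ because, as recalled in the discussion after Theorem \ref{dwtdfredholm}, on $X\setminus\{p_j\}$ the image of $C^{4,\alpha}_\delta$ is the orthogonal complement of $\textnormal{Ker }\mathcal{D}^*\mathcal{D}_{C^{4,\alpha}_{4-2n-\delta}}$, and for $\delta,4-2n-\delta\in(4-2n,0)$ this kernel is just the holomorphic vector fields vanishing at the $p_j$ that extend across — but it is cleanest to note the index contribution of the punctures vanishes in this range, so the total index equals the $X\setminus D$ index, forcing the cokernel to be exactly the one from Proposition \ref{kercokerlem}. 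Then the second line of the proposition (note: with $i\in\{1,\dots,s\}$, which I read as a relabelling; I would double-check the indexing against how $V_1,V_2$ were defined so the surviving cokernel generators match) follows, and the $\textnormal{Im }\Phi = \overline{\mathfrak{h}_{//}^D}^\perp$ line follows exactly as in Lemma \ref{modifiedkercokerlem}, since $\varrho$ surjects onto the missing cokernel directions.

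So the overall structure is: (1) a decoupling/removable-singularity lemma showing that for weights $\delta\in(4-2n,0)$ nothing changes at the punctures, reducing kernel and cokernel of the doubly weighted operator to those of the singly weighted operator on $X\setminus D$; (2) quote Proposition \ref{kercokerlem} and Lemma \ref{modifiedkercokerlem} for the explicit answers; (3) check the added summands for $\Phi$ sit in $C^{4,\alpha}_{\delta,\eta}$ and that $\varrho$ is unaffected. I expect step (1) — specifically, pinning down precisely which indicial roots of $\Delta^2$ on $\mathbb{C}^n\setminus\{0\}$ lie in the relevant interval and confirming that the index contribution from each puncture is zero when $\delta\in(4-2n,0)$ — to be the main obstacle, or at least the step where one has to be most careful, since one must ensure no spurious kernel elements (e.g.\ harmonic-type functions homogeneous of negative degree) or cokernel elements are introduced by the blow-up locus. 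Everything else is a bookkeeping exercise essentially identical to the proof of Proposition \ref{kercokerlem}, combined with the index-additivity statement that was already asserted in the paragraph following Theorem \ref{dwtdfredholm}.
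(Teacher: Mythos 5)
Your proposal correctly identifies the removable-singularity mechanism and follows essentially the same route as the paper's one-paragraph sketch: for $\delta \in (4-2n,0)$ kernel elements of the doubly weighted operator extend smoothly across each $p_j$ (there are no indicial roots of $\Delta^2$ in that interval, and each puncture contributes zero to the index), so all four lines reduce to Proposition \ref{kercokerlem} and Lemma \ref{modifiedkercokerlem}. Your pivot from the dual-kernel at weight $(4-2n-\delta, 1-\eta)$ to index bookkeeping, prompted by noticing $1-\eta \in (1,1+\kappa)$, is exactly the kind of care the paper elides; either route works, since Theorem \ref{ptfredholmthm} supplies the duality for any non-indicial weight and the index additivity across the punctures forces the cokernel to be the one already computed on $X \setminus D$.

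One point you should make firmer rather than hedge: the index set $\{1,\dots,s\}$ in the statement is simply a typo for $\{s+1,\dots,r\}$, not a relabelling. By the conventions fixed before Proposition \ref{kercokerlem}, $f_1,\dots,f_s$ label potentials for vector fields on $D$ that \emph{are} induced from $X$, and the argument via Lemma \ref{psiperturblem} shows $\mathcal{D}^*\mathcal{D}(\psi_i)$ lies in $\textnormal{Im}\,\mathcal{D}^*\mathcal{D}_{C^{4,\alpha}_\eta}$ precisely when $i \leq s$. So the displayed sum $\textnormal{Im}(\mathcal{D}^*\mathcal{D}_{C^{4,\alpha}_\eta}) \oplus \langle \mathcal{D}^*\mathcal{D}(\psi_i): i \in \{1,\dots,s\}\rangle$ cannot be direct as written. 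The surviving cokernel generators are exactly the $\psi_i$ with $i>s$, i.e.\ those coming from holomorphic vector fields on $D$ \emph{not} induced from $X$, which is also what the index count $-(r+1)$ from Proposition \ref{kercokerlem} requires once the puncture contributes nothing.
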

This follows from Theorem \ref{dwtdfredholm} and that when the weights are in $(4-2n,0)$, the elements of the kernel on the doubly weighted spaces actually extend across the punctured points. Thus such functions can be considered as elements in $C^{k+4, \alpha}_{\eta} ( X \setminus D )$, and the result is then a direct consequence of Proposition \ref{kercokerlem} and Lemma \ref{modifiedkercokerlem}.

%%%%%%%%%%%%%%%%%%

\section{Solving the non-linear equation}\label{nonlinearsect}

Having the linear theory in place, we are now ready to solve the extremal equation. We begin by stating the system of equations we would like to solve in order to solve the extremal equation on the blow-up. This is identical to the case of \cite{arezzopacardsinger11}. Let $\BlX$ denote the blow-up of $X$ in the points $p_1, \cdots, p_k$. We identify $D \subseteq X$ with its pull-back to $\BlX$ via the blow-down map, and still denote this $D$.

Let $\ball$ denote the (open) ball of radius $r_{\varepsilon} = \varepsilon^{\frac{2n-1}{2n+1}}$ about $p_j$ in $X \setminus D$, where the radius is measured as the Euclidean distance in some fixed $T$-invariant holomorphic normal coordinates about $p_j$. Let $Y_{\varepsilon}$ denote the complement in $X \setminus D$ of the union of these balls. Let $\Blball$ denote the subset of $\Bl_0 \mathbb{C}^n$ given as the pre-image via the blow-down map of a ball of radius $\frac{R_{\varepsilon}}{\sqrt{a_j}}$ about the origin in $\mathbb{C}^n$, where $R_{\varepsilon} = \varepsilon^{-\frac{2}{2n+1}}$. For each $\varepsilon > 0 $ we have an identification 
\begin{align}\label{connectedsum} \BlX \setminus D \cong Y_{\varepsilon} \coprod_{j=1}^k \Blball / \sim
\end{align} 
of the blow-up with a connected sum, where the equivalence relation $\sim$ is the gluing of the boundary $\partial B^j_{\varepsilon}$ of $B^j_{\varepsilon}$ with the boundary $\partial \Blball$ of $\Blball$ via the coordinate change $$ \underline{z}^j = \varepsilon \sqrt{a_j} \cdot \underline{w}^j.$$ Here $\underline{z}^j =(z_1^j, \cdots, z_n^j)$ are the holomorphic normal coordinates about $p_j$ fixed earlier, and $\underline{w}^j$ are the coordinates on the complement of the exceptional divisor in $\Bl_0 \mathbb{C}^n$ coming from its identification with $\mathbb{C}^n \setminus \{ 0 \}$.

The aim is then to construct extremal metrics on each of the pieces in \eqref{connectedsum} and show that we can match them to sufficiently high order over their common boundary. This is done in several steps. We follow \cite{arezzopacardsinger11} and make an initial perturbation of an approximate metric $\omega_{\varepsilon}$ constructed earlier to obtain a metric which is extremal to a high order (in terms of the distance function to the blown up points). This initial perturbation only depends on the constants $a_j$. Given boundary data, we then perform a second perturbation to construct a metric which extremal to higher order. We then use this metric to construct metrics that are extremal up to a finite dimensional set of obstructions on the two pieces $Y_{\varepsilon}$ and $\cup_{j=1}^{k} \Blball$, parametrised by certain boundary data. We then show that for all sufficiently small $\varepsilon > 0$ we can use these metrics to solve the same boundary value problem and thus solve the extremal equation. This is the content of the next sections.

\begin{rem} In the weighted analysis near the blow-up points, we need to take special care with the case of surfaces as one then needs to work with different weights. This features in both \cite{arezzopacardsinger11} and \cite{szekelyhidi12}. However, the way to approach this is no different in our case than in the compact case. Since our focus is on the new behaviour the Poincar\'e type asymptotics introduce, we will not go further in discussing how to alter the argument for the surface case, and instead simply refer to \cite{arezzopacardsinger11}.
\end{rem}

\subsection{The initial perturbation}
In order to solve the extremal equation, one can make an initial approximate solution in the appropriate class as follows. This step features in both the approach of \cite{arezzopacardsinger11} and \cite{szekelyhidi12}, but we will follow an argument closer to that of the latter. We focus on the case of one point, with the construction for several points simply being that one does exactly the same construction around each point separately, with appropriate scaling.  Around a point $p$ to be blown up, recall that we use $T$-invariant holomorphic normal coordinates to write the K\"ahler form as
\begin{align*} \omega = i \partial \overline{\partial} \bigg( \frac{|z|^2}{2} + \phi (z) \bigg)
\end{align*}
for some $\phi$ which is $O(|z|^4)$. After scaling $\omega$, we can assume the normal coordinates are defined for $|z| \leq 2$ (when blowing up several points we scale $\omega$ so that this holds for all the points and such that all these balls are disjoint).

The Burns-Simanca metric $\zeta$ is a metric on the blow-up $Bl_0 \mathbb{C}^n$ of $\mathbb{C}^n$ at the origin which is scalar flat and asymptotically Euclidean. We write
\begin{align*} \zeta = i \partial \overline{\partial} \bigg( \frac{|w|^2}{2} + \psi (w) \bigg),
\end{align*}
where $w$ is the coordinates away from the exceptional divisor in $\Bl_0 \mathbb{C}^n$ induced from its identification with $\mathbb{C}^n \setminus \{ 0 \}$.

Let $r_{\varepsilon} = \varepsilon^{\frac{2 n -1}{2n+1}}$. By taking a slightly different viewpoint in the connected sum construction, we consider $Bl_p X$ as the manifold obtained by gluing the complement of a ball around $p$ in $X$ with a neighbourhood of the expectional divisor in the blow-up of $\mathbb{C}^n$ in the origin. This is achieved by identifying the annulus $B_{2r_{\varepsilon}} \setminus B_{r_{\varepsilon}} $ with a corresponding annulus around the exceptional divisor on $Bl_0 \mathbb{C}^n$, using the coordinate transformation $w = \varepsilon^{-1} z$. The approximate solution will be constructed by gluing $\omega$ and $\zeta$ on this annulus. 

Let $\gamma$ be a cut-off function $\mathbb{R} \rightarrow [0,1]$ with 
\begin{align*} \gamma (x) = 0, & \textnormal{ }x < 1, \\
\gamma (x) = 1, &\textnormal{ } x > 2.
\end{align*}
Define $\gamma_1$ to be
\begin{align*} \gamma_1 (r) = \gamma (\frac{r}{r_{\varepsilon}}),
\end{align*}
and let $\gamma_2 = 1- \gamma_1$. We define the approximate solution to be $\omega$ on the complement of $B_1$ and 
\begin{align*} i \partial \overline{\partial} \big(\frac{|z|^2}{2} + \gamma_1 (|z|) \phi (z) +  \varepsilon^2 \gamma_2 (|z|) \psi (\varepsilon^{-1} z)  \big)
\end{align*}
on $B_1 \setminus B_{\varepsilon}$. 

Since $\frac{2n-1}{2n+1} < 1$, we have that $r_{\varepsilon} > \varepsilon$ and so $B_{\varepsilon} \subseteq B_{r_{\varepsilon}}$. On $B_{r_{\varepsilon}}$, we have $\gamma_1=0$ and $\gamma_2 =1$, so that the approximate solution is 
\begin{align}\label{scaledclass} i \varepsilon^2 \partial \overline{\partial} ( \frac{|\varepsilon^{-1} z|^2}{2} + \psi (\varepsilon^{-1} z) ) &= \varepsilon^2 \zeta.
\end{align}
So in the pre-image of $B_{\varepsilon}$ in $Bl_p M$ under the blow-down map, we let the approximate solution equal the scaled Burns-Simanca metric $\varepsilon^2 \zeta$. 

When blowing up several points, we do not want to impose that the volume of all the exceptional divisors are equal. The change of coordinates is now $z^j = \varepsilon \sqrt{a_j} \cdot w^j$,  so we instead use the scaling 
\begin{align*} i \partial \overline{\partial} \big(\frac{|z|^2}{2} + \gamma_1 (|z|) \phi (z) +  a_j \varepsilon^2 \gamma_2 (|z|) \psi (\varepsilon^{-1} z)  \big)
\end{align*}
in the annular region around the point $p_j$, so that the approximate solution is in the correct class, i.e. we obtain $a_j \varepsilon^2 \zeta$ in \eqref{scaledclass} near $p_j$.

We now wish to find a better approximate solution to the extremal equation on $\BlX \setminus D$. We stress that in contrast to the next steps, finding the function $\Gamma = \Gamma_{a_1, \cdots, a_k}$ achieving this only depends on the direction into the K\"ahler cone we are going, i.e. only the $a_j$, and does not involve any boundary data. 

To find a better approximate solution, we need to match the metric glued in from $X 
\setminus D$ with the scaled Burns-Simanca metrics on around each point to higher order. We cannot find such a metric on the whole of $X \setminus D$, but under our assumptions we can achieve it on the complement on the blown-up points, applying the linear theory of Section \ref{lintheory}. 

When using the coordinate identifications above, the Burns-Simanca metric $a_j \zeta$ has an asymptotic expansion $$ a_j \zeta = \ddb \bigg( \frac{| z |^2}{2} - a_j^{n-1} |z|^{4-2n} + \phi (z) \bigg) $$ where $\phi$ is $O(| z |^{6-2n})$ when $\textnormal{dim}(X) > 3$ and $O(\log (| z |))$ when $\textnormal{dim}(X) = 3$, and $$a_j \zeta = \ddb \bigg( \frac{| z |^2}{2} - a_j \log (z) + \phi(| z |) \bigg)$$ with $\phi = O(1)$ when $\textnormal{dim}(X) =2$. Here $z = \underline{z}^j$ is the holomorphic normal coordinates about $p_j$. Thus to match $\omega + \ddb \big( \varepsilon^{2n-2} \Gamma \big)$ up with $a_j \varepsilon^2  \zeta$ to higher order, we wish to find a solution to 
\begin{align} \mathcal{D}^* \mathcal{D} \big( \Gamma \big) = h - \sum_{j=1}^k a_j^{n-1} \delta_{p_j} ,
\end{align}
where $h$ is a holomorphy potential. In the compact case, what we require is that $h$ is a potential for a vector field in $\mathfrak{t}$. In our case, $\overline{\mathfrak{h}} \not\subseteq C^{0, \alpha}_{\eta}$, so we cannot simply do this. However, using the modified H\"older spaces of Section \ref{modifiedholder}, we can find a function $\Gamma$ decaying near the divisor, an average zero holomorphy potential $f_{\Gamma} \in \mathfrak{h}_D$, a constant $\lambda_{\Gamma}$, and  $h_{\Gamma} \in  \mathfrak{h}_{//}^D$ such that this holds, i.e. we can solve 
\begin{align}\label{gammadefn} \mathcal{D}^* \mathcal{D} \big( \Gamma + \lambda_{\Gamma} t \big) = h_{\Gamma}  + \varrho (f_{\Gamma}) - \sum_{j=1}^k a_j^{n-1} \delta_{p_j} .
\end{align}
This follows from Proposition \ref{dwtdexplicit}. 

Since we also want this to give an approximate solution to the extremal equation on the blow-up, we need to be able to lift $h$ to a holomorphic vector field on the blow-up. This is only possible when $h$ induces a holomorphic vector field that lies in the subalgebra $\mathfrak{t}$ of $\mathfrak{h}_{//}^D$, i.e. if the requirement \eqref{condbala} in Theorem \ref{mainblthm} hold. 

Note that the functions induced from the vector fields on $D$ do lift to the blow-up, but are not potentials for holomorphic vector fields on the blow-up. We will see that these functions give the new obstructions to obtaining an extremal metric on the blow-up in the Poincar\'e type case.

\begin{rem} In contrast to imposing that $h \in \overline{\mathfrak{t}}$, we do not impose that $f=0$ now, because in the Arezzo-Pacard type argument we will need to let the divisor volumes vary. Therefore it is only at the end, when we know the actual divisor volumes, that we will check that no term like $\varrho(f)$ was needed. This will use the assumption \eqref{extremalvf}. A posteriori we see that $f$ would have to be $0$ for the classes we consider, by differentiating the family $\mathfrak{X}_{\varepsilon}$ with respect to $\varepsilon$.
\end{rem}

\subsection{The second perturbation}

The next step in the proof is to construct an even better approximate solution near the gluing region to the extremal equation, given boundary data on the common boundary of the pieces in the connected sum presentation \eqref{connectedsum} of $\BlX \setminus D$. 

The highest order term of the Lichnerowicz operator agrees with that of the bi-Laplacian $\Delta^2$. In the gluing region, the metrics are approximately Euclidean, and so in this region the metric Laplacian agrees with the usual Laplacian to high order. Using the $\varepsilon$-dependent identification of the fixed annular region $B_2 \setminus B_{\frac{1}{2}}$ with such a region either in the punctured manifold $Y$ or $\Bl_0 \mathbb{C}^n$, we will get the approximate solutions to match up to higher order by pulling back functions that are biharmonic with respect to the Euclidean Laplacian on $B_2 \setminus B_{\frac{1}{2}}$.

We will pull back functions given by the following 
\begin{prop}[{\cite[Prop. 5.3.1]{arezzopacardsinger11}}]\label{secondapprox} Suppose $\upsilon \in C^{4,\alpha} \big( \partial B_1 \big) $ and $\varsigma \in C^{2, \alpha}\big( \partial B_1 \big)$. There is a constant $C>0$ such that: 

If $$ \int_{\partial B_1} 4n \upsilon - \varsigma = 0$$ 
then there exists a biharmonic function $V \in C^{4, \alpha}_1 (B_1 \setminus \{ 0 \} )$ such that 
\begin{align*} V =& \upsilon \\
\Delta V=& \varsigma
\end{align*}
on $\partial B_1$, and
\begin{align*} \| V \|_{C^{4,\alpha}_1 ( B_1 \setminus \{ 0\} )} \leq C \big(  \| \upsilon \|_{C^{4,\alpha} ( \partial B_1 )} +  \| \varsigma \|_{C^{2,\alpha}( \partial B_1  )}  \big) .
\end{align*}

If $$ \int_{\partial B_1} k = 0$$ 
then there exists a biharmonic function $W \in C^{4, \alpha}_{3-2n} (\mathbb{C}^n \setminus B_1 )$ such that 
\begin{align*} W =& h \\
\Delta W=& k
\end{align*}
on $\partial B_1$, and
\begin{align*} \| W \|_{C^{4,\alpha}_1 (\mathbb{C}^n \setminus B_1 )} \leq C \big(  \| h \|_{C^{4,\alpha} ( \partial B_1 )} +  \| k \|_{C^{2,\alpha}( \partial B_1  )}  \big) .
\end{align*}

Moreover, if $\upsilon, \varsigma$ are torus-invariant with respect to the action of some torus contained in $U(n)$, then so are $V$ and $W$.
\end{prop}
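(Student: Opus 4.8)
The plan is to reduce the problem to an explicit analysis on the punctured ball (resp.\ the exterior of a ball) using separation of variables in spherical harmonics, exactly following \cite[Prop. 5.3.1]{arezzopacardsinger11}, and then to verify that the single-valued-ness obstruction is precisely the integral condition stated. First I would recall that on $\mathbb{C}^n \setminus \{0\} = \mathbb{R}^{2n} \setminus \{0\}$, biharmonic functions admit a decomposition into products $r^{\gamma} Y(\theta)$ (possibly with an extra $\log r$ factor), where $Y$ ranges over spherical harmonics on $S^{2n-1}$ and $\gamma$ runs over the indicial roots of $\Delta^2$ attached to that harmonic: for the harmonic of degree $\ell$ these are $\gamma \in \{\ell,\, \ell+2,\, 2-2n-\ell,\, 4-2n-\ell\}$. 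The Dirichlet data $(\upsilon,\varsigma)$ on $\partial B_1$ expand in spherical harmonics, and for each nonzero mode $\ell \geq 1$ the two growth rates compatible with the required decay ($V \in C^{4,\alpha}_1$ near $0$ means we keep the $r^{\ell}, r^{\ell+2}$ branches; $W \in C^{4,\alpha}_{3-2n}$ near infinity means we keep the $r^{2-2n-\ell}, r^{4-2n-\ell}$ branches) give a $2\times 2$ linear system for the two coefficients, with invertible coefficient matrix (Wronskian-type nonvanishing). This solves the mode-by-mode problem with the stated norm bound, the constant $C$ being uniform because the family of $2\times2$ systems is uniformly invertible and the series converge in Hölder norm by standard elliptic estimates on the annulus.

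The only obstruction occurs in the $\ell = 0$ (radially symmetric) mode. For $V$ near the origin, the admissible radial biharmonic functions with the decay $C^{4,\alpha}_1$ are spanned by $r^2$ alone (the constant, $r^{4-2n}$ for $n\ge 3$, and $r^2\log r$ or $\log r$ type branches are excluded by the weight), so one has a single free parameter but must hit two pieces of data, namely the $\ell=0$ parts of $\upsilon$ and of $\varsigma$; since $\Delta(r^2) = 4n$ is constant, matching $V|_{\partial B_1}$ and $\Delta V|_{\partial B_1}$ forces exactly $\overline{\Delta V} = 4n\,\overline{V}$ on the average level, i.e.\ $\int_{\partial B_1}(4n\upsilon - \varsigma) = 0$. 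Dually, for $W$ in the exterior region the admissible radial branch under the weight $3-2n$ is spanned by $r^{2-2n}$ (for $n \geq 2$), whose Laplacian vanishes, so matching $W$ and $\Delta W$ forces $\int_{\partial B_1} k = 0$; here one needs the half-dimensional bookkeeping from \cite{arezzopacardsinger11} in the surface case $n=2$, where $r^{2-2n}=r^{-2}$ still works but the weight conventions differ slightly, and I would simply quote their treatment. Once the $\ell=0$ compatibility holds, solving all modes and summing yields $V$ (resp.\ $W$) with the asserted regularity and estimate.

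Finally, the torus-invariance statement is immediate from the construction: if $(\upsilon,\varsigma)$ are invariant under a torus $\subseteq U(n)$ acting on $S^{2n-1}$, then their spherical-harmonic expansions only involve torus-invariant harmonics, and the mode-by-mode solutions $r^{\gamma}Y(\theta)$ built from those same harmonics are again torus-invariant; equivalently, averaging the solution over the torus produces another solution with the same boundary data, and by the uniqueness within the prescribed weighted space it must coincide with $V$ (resp.\ $W$). The main obstacle in writing this out carefully is not any single step but the uniformity of the constant $C$ across all modes together with the convergence of the spherical-harmonic series in the weighted Hölder norms; this is handled exactly as in \cite[Sec. 5.3]{arezzopacardsinger11}, so I would present the argument at the level of detail above and refer to that source for the remaining bookkeeping, since nothing about the Poincar\'e type setting enters here — this proposition is purely local near the blow-up points and is imported verbatim.
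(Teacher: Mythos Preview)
Your sketch is correct and faithfully outlines the spherical-harmonic argument of \cite[Prop.~5.3.1]{arezzopacardsinger11}, including the identification of the $\ell=0$ obstruction as the stated integral conditions. Note, however, that the paper does not supply its own proof of this proposition at all: immediately after the statement it simply remarks that ``since this is a result for the Euclidean Laplacian, we do not require any modification to the result in Arezzo-Pacard-Singer,'' and moves on. So there is nothing to compare your argument against here beyond the observation that you have written out what the paper is content to cite; your closing remark that the proposition is purely local near the blow-up points and imported verbatim is exactly the paper's own stance.
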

Since this is a result for the Euclidean Laplacian, we do not require any modification to the result in Arezzo-Pacard-Singer. 

We end this section by explaining how we will pull $V$ and $W$ back to the preimage of balls $\Bl_0 \mathbb{C}^n$ and $Y_{\varepsilon}$, respectively, in order to create better approximate solutions. For the former, assume that $\upsilon$ and $\varsigma$ are $T$-invariant functions on $\partial B_1$ satisfying $\int 4n \upsilon - \varsigma = 0$. Given a positive parameter $a >0$, we define a $T$-invariant function $V_{\varepsilon, a}$ on $\Bl_0 \overline{B}_{\frac{R_{\varepsilon}}{a}}$ as follows, where we recall $R_{\varepsilon} = \varepsilon^{-\frac{2}{2n+1}}$. 

Let $\chi$ be a $T$-invariant cut-off function vanishing on $\Bl_0 B_{1} $ and equal to $1$ in the complement of $\Bl_0 B_2$ in $\Bl_0 \mathbb{C}^n$. We will let $V_{\varepsilon,a}$ be the function on $\Bl_0 \overline{B}_{\frac{R_{\varepsilon}}{a}}$ that vanishes inside $\Bl_0 B_{1}$ and which outside this region satisfies
\begin{align}\label{Vepsdefn} V_{\varepsilon, a} (w) = \chi (w) V (a \frac{w}{R_{\varepsilon}}),
\end{align}
where $V$ is the function given by Proposition \ref{secondapprox}.

Next, we suppose we are given $T$-invariant functions $\upsilon_1, \cdots, \upsilon_k$ and $\varsigma_1, \cdots , \varsigma_k$ on $\partial B_1$ and that $\int \varsigma_j = 0$ for all $j$. Then we define a $T$-invariant function $W_{\varepsilon}$ on $Y_{\varepsilon}$ as follows. Let $\chi_j$ be a $T$-invariant cut-off function equal to $0$ outside $B_{2}^j (p_j)$ and equal to $1$ in $B^j_1 (p_j)$. We will let $W_{\varepsilon}$ vanish outside $B_{2}^j$ and on $B^j_2 \setminus B_{r_{\varepsilon}}^j$ we let 
\begin{align}\label{Wepsdefn} W_{\varepsilon} (z) = \chi_j (z) W (\frac{z}{r_{\varepsilon}}),
\end{align}
where $W$ is the function given by Proposition \ref{secondapprox}.

\subsection{Constructing extremal metrics on the two pieces}

As mentioned before, we wish to solve a boundary value problem on $Y_{\varepsilon}$ and on each of $ \Blball$ for the extremal equation. In reality we will solve a more general equation, because of the additional cokernel elements in the Poincar\'e type weighted space. Since the points blown up are not on the divisor $D$, the construction of such metrics on $\Blball$ is identical to the construction in \cite{arezzopacardsinger11}. We begin this section by stating our assumptions, before recalling these results of Arezzo-Pacard-Singer, and then prove the analogous result for the Poincar\'e type piece $Y_{\varepsilon}$.

As before, for each $\varepsilon$, we will pull functions back to a fixed annular region $A = B_2 \setminus B_{\frac{1}{2}} \subseteq \mathbb{C}^n$, where $B_r$ is the ball of radius $r$ in $\mathbb{C}^n$. When mapping to this annular region, the points outside of $B_1$ correspond to points in $Y_{\varepsilon}$ and the points in $\overline{B}_1$ lie in one of the $\Blball$. We will fix data on the boundary where the two regions meet that are sufficiently small in the weighted norm. 

We begin with the case of the construction of extremal metrics on the blow-up of all sufficently large balls in $\mathbb{C}^n$. This does not involve the Poincar\'e type behaviour and so is exactly as for Arezzo-Pacard-Singer. Recall that $R_{\varepsilon} = \varepsilon^{- \frac{2}{2n+1}}$. Suppose that $\upsilon \in C^{4,\alpha} (\partial B_1)$ and $\varsigma \in C^{2,\alpha} (\partial B_1)$ are torus-invariant functions satisfying 
\begin{align}\label{boundarycondin} \| \upsilon \|_{C^{4,\alpha}} + \| \varsigma \|_{C^{2,\alpha}} \leq \tau R_{\varepsilon}^4
\end{align}
for some $\tau>0$ that is to be determined. Suppose further that $$ \int_{\partial B_1} 4n\upsilon - \varsigma = 0 .$$

Provided this condition is satisfied, there are extremal metrics with prescribed extremal vector field $\mathfrak{X} \in \mathfrak{t}$, and prescribed average on the boundary, on the blow-up of all sufficently large balls in $\mathbb{C}^n$. 

\begin{prop}[{\cite[Prop.6.2.1]{arezzopacardsinger11}}]\label{interiorsoln} Let $\mathfrak{X} \in \mathfrak{t}$ and $\nu \in \mathbb{R}$. There is a $c>0$ and for every $\tau > 0$ there is a $\varepsilon_{\tau} > 0$ such that if $\varepsilon \in (0, \varepsilon_{\tau})$ then for any $\upsilon,\varsigma$ satisfying \eqref{boundarycondin}, there is a $T$-invariant function $\phi_{\varepsilon, a} \in C^{4,\alpha} \big( \Bl_0 \overline{B}_{\frac{R_{\varepsilon}}{a}} \big)$ such that 
\begin{align*} \zeta_{\varepsilon, a} = a^2 \zeta + \ddb \big( V_{\varepsilon, a} + \phi_{\varepsilon, a} \big)
\end{align*}
is K\"ahler, extremal with extremal vector field $\varepsilon^4 \mathfrak{X}$, and such that the scalar curvature $S_{\varepsilon , a} = S \big(\zeta_{\varepsilon, a} \big)$ satisfies $$\int_{\partial B_1} S_{\varepsilon , a} \big( R_{\varepsilon} \frac{x}{a}  \big) dx = \nu \varepsilon^2 |\partial B_1| .$$ Further, $$ \| \phi_{\varepsilon, a} \big( R_{\varepsilon} \frac{x}{a} \big) \|_{C^{4,\alpha} (\overline{B}_1 \setminus B_{\frac{1}{2}})} \leq c R_{\varepsilon}^{3-2n}.$$

If $\phi_{\varepsilon, a}$ and $\widetilde{\phi}_{\varepsilon, a'}$ are determined by the data $\upsilon, \varsigma, \mathfrak{X}, \nu$ and $\widetilde{\upsilon}, \widetilde{\varsigma}, \widetilde{\mathfrak{X}}, \widetilde{\nu}$, respectively, then for $\delta \in (0,1)$,
\begin{align}\label{intvariationbnd} & \| \phi_{\varepsilon, a} \big( R_{\varepsilon} \frac{x}{a} \big) - \widetilde{\phi}_{\varepsilon, a'} \big( R_{\varepsilon} \frac{x}{a'} \big) \|_{C^{4,\alpha} (\overline{B}_1 \setminus B_{\frac{1}{2}})} \nonumber \\
\leq c_{\tau} \bigg( & R_{\varepsilon}^{1-\delta} \| \upsilon - \widetilde{\upsilon} \|_{C^{4,\alpha}} + R_{\varepsilon}^{1-\delta} \| \varsigma - \widetilde{\varsigma} \|_{C^{2,\alpha}}  \\
& + R_{\varepsilon}^{3-2n} |\nu - \widetilde{\nu}| + R_{\varepsilon}^{3-2n} | a - \widetilde{a}| + R_{\varepsilon}^{4-4n} \| \mathfrak{X} - \widetilde{\mathfrak{X}} \| \bigg) \nonumber,
\end{align}
where $c_{\tau}$ depends on $\tau$, a uniform bound on the norms of $\nu, \nu'$ and the norms of the vector fields $\mathfrak{X}, \widetilde{\mathfrak{X}}$, as well as a uniform bound $$ a_0 \leq a \leq a_1$$ for $a$ and $a'$, where $a_0, a_1 >0.$
\end{prop}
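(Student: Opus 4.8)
The plan is to follow the argument of Arezzo--Pacard--Singer \cite{arezzopacardsinger11} essentially verbatim: since the blown-up points lie away from $D$, none of the Poincar\'e type behaviour enters here, and the Burns--Simanca metric $\zeta$ on $\Bl_0\mathbb{C}^n$ is scalar-flat and asymptotically Euclidean. First I would set up the extremal equation on $\Bl_0\overline{B}_{R_\varepsilon/a}$ as a perturbation problem: for a candidate $\zeta_{\varepsilon,a} = a^2\zeta + \ddb(V_{\varepsilon,a} + \phi)$, being extremal with extremal vector field $\varepsilon^4\mathfrak{X}$ amounts to solving $S(\zeta_{\varepsilon,a}) = \varepsilon^4 h_{\mathfrak{X}} + c$ for a constant $c$ and a holomorphy potential $h_{\mathfrak{X}}$ of $\mathfrak{X}$. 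Expanding around $a^2\zeta$, which is scalar-flat, this becomes $\mathcal{D}^*_{a^2\zeta}\mathcal{D}_{a^2\zeta}\,\phi = \mathcal{F}(V_{\varepsilon,a}, \mathfrak{X}) + Q(\phi)$, where $\mathcal{F}$ collects the error coming from the cut-off profile $V_{\varepsilon,a}$ and from the term $\varepsilon^4 h_{\mathfrak{X}}$, and $Q$ is the quadratic-and-higher nonlinearity. After rescaling by $R_\varepsilon$ one reads this on a fixed annulus glued to $\Bl_0\mathbb{C}^n$.

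Next I would invoke the linear theory for $\mathcal{D}^*_{a^2\zeta}\mathcal{D}_{a^2\zeta}$ on the weighted H\"older spaces $C^{k,\alpha}_\delta(\Bl_0\mathbb{C}^n)$: for $\delta$ in the appropriate range, avoiding the integer indicial roots of $\Delta^2$, this operator admits a bounded right inverse modulo the finite-dimensional space spanned by the constants and the holomorphy potentials of elements of $\mathfrak{t}$, with norms that remain uniform as the domain $B_{R_\varepsilon/a}$ exhausts $\mathbb{C}^n$. This uniformity, obtained by the usual scaling argument patching the interior ALE estimate with the Euclidean estimate on the large annulus, is the technical heart of the proposition. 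The prescribed boundary data $(\upsilon,\varsigma)$, namely the values of $V$ and $\Delta V$ on $\partial B_1$, enter via Proposition \ref{secondapprox} subject to the solvability constraint $\int_{\partial B_1} 4n\upsilon - \varsigma = 0$, which is precisely the compatibility condition with the cokernel generated by the constant function; the remaining finite-dimensional obstruction is absorbed by letting the free parameters $\nu$ (the prescribed average of $S_{\varepsilon,a}$ on $\partial B_1$) and $\mathfrak{X}$ play against the cokernel, so that the augmented linear map is surjective with uniformly bounded inverse $\mathcal{G}_\varepsilon$.

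With $\mathcal{G}_\varepsilon$ in hand, the equation becomes the fixed-point problem $\phi = \mathcal{G}_\varepsilon\big(\mathcal{F}(V_{\varepsilon,a},\mathfrak{X}) + Q(\phi)\big)$. The initial error satisfies $\|\mathcal{F}(V_{\varepsilon,a},\mathfrak{X})\| = O(R_\varepsilon^{3-2n})$ in the relevant weighted norm, which is exactly where the choices $r_\varepsilon = \varepsilon^{(2n-1)/(2n+1)}$ and $R_\varepsilon = \varepsilon^{-2/(2n+1)}$ are tuned; since $Q$ is quadratic with locally bounded coefficients, for $\varepsilon < \varepsilon_\tau$ small enough the map is a contraction on the ball of radius $2\|\mathcal{G}_\varepsilon\|\cdot O(R_\varepsilon^{3-2n})$, yielding $\phi_{\varepsilon,a}$ with the stated bound $\|\phi_{\varepsilon,a}(R_\varepsilon x/a)\|_{C^{4,\alpha}(\overline B_1\setminus B_{1/2})} \leq c R_\varepsilon^{3-2n}$. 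For the Lipschitz estimate \eqref{intvariationbnd}, I would subtract the two fixed-point identities for data $(\upsilon,\varsigma,\mathfrak{X},\nu)$ and $(\widetilde\upsilon,\widetilde\varsigma,\widetilde{\mathfrak{X}},\widetilde\nu)$, use the uniform bound on $\mathcal{G}_\varepsilon$ together with the contraction property to solve for the difference, and then book-keep how each datum propagates through $\mathcal{G}_\varepsilon$ and the rescaling: boundary data contributes at order $R_\varepsilon^{1-\delta}$, the constant $\nu$ and the scale $a$ at order $R_\varepsilon^{3-2n}$, and the vector field $\mathfrak{X}$ at order $R_\varepsilon^{4-4n}$, which is \eqref{intvariationbnd}. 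The torus-invariance of $V,W$ and of $\zeta$, together with $T$-invariant cut-offs, means the entire construction can be run $T$-equivariantly, so $\phi_{\varepsilon,a}$ is automatically $T$-invariant. The one genuinely delicate point, as always in this circle of ideas, is establishing the $\varepsilon$-uniformity of the linear estimates; everything else is a standard contraction-mapping bookkeeping.
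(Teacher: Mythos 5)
The paper gives no proof of this proposition --- it is cited verbatim from Arezzo--Pacard--Singer \cite[Prop.~6.2.1]{arezzopacardsinger11}, with the remark that the construction on $\Bl_0\overline{B}_{R_\varepsilon/a}$ does not see the divisor $D$ and hence none of the Poincar\'e type analysis enters. Your sketch is a faithful reconstruction of the APS argument (perturbation off the scalar-flat Burns--Simanca metric, uniform right inverse on the ALE weighted spaces with the finite-dimensional cokernel absorbed by the parameters $\nu$ and $\mathfrak{X}$, contraction mapping, and $T$-equivariance), so it takes the same approach as the source the paper defers to.
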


We now turn to the case of constructing extremal metrics away from the blown-up points. Suppose $\upsilon_j \in C^{4,\alpha} \big( \partial B_1 \big)$ and $\varsigma_j \in C^{2,\alpha} \big( \partial B_1 \big)$ are $T$-invariant functions on $\partial B_1$ satisfying 
\begin{align}\label{boundarycondout} \| \upsilon_j \|_{C^{4,\alpha}} + \| \varsigma_j \|_{C^{2,\alpha}} \leq \tau r_{\varepsilon}^4
\end{align}
where, as before, $\tau>0$ is a constant that we will determine at the end of the proof. Moreover, we will assume
\begin{align}\label{kcond} \int_{\partial B_1} \varsigma_j = 0.
\end{align}

The key result of this section is the analogous result to Proposition \ref{interiorsoln} away from the blown up points. We will fix $a_1, \cdots , a_k >0$ and let $\Gamma, \lambda_{\Gamma}, h_{\Gamma}, f_{\Gamma}$  be chosen as in equation \eqref{gammadefn} with respect to this choice of these parameters. 
\begin{prop}\label{exteriorsoln} There is a $c, \theta >0$ and for every $\tau > 0$ there is a $\varepsilon_{\tau} > 0$ such that if $\varepsilon \in (0, \varepsilon_{\tau})$ then for any $\upsilon_j,\varsigma_j$ satisfying \eqref{boundarycondout} and \eqref{kcond}, and for any choice of constants $a_1, \cdots, a_k >0$ there is a $T$-invariant function $\phi_{\varepsilon} \in C_{\eta}^{4,\alpha} \big( Y_{\varepsilon} \big) $, a constant $\lambda_{\varepsilon} \in \mathbb{R}$ , 
an $h_{\varepsilon} \in \overline{\mathfrak{t}}$ and an $f_{\varepsilon} \in \mathfrak{h}_D$ such that
\begin{align*} \omega_{\varepsilon} = \omega + \ddb \big( \varepsilon^{2n-2} \big( \Gamma + \lambda_{\Gamma} t \big) + W_{\varepsilon} + \phi_{\varepsilon} + \lambda_{\varepsilon} t \big)
\end{align*}
is K\"ahler, whose associated vector field has  
potential 
$$ \mathcal{H}_{\varepsilon} + \frac{1}{2} \langle \nabla \big( \mathcal{H}_{\varepsilon} \big) ,  \nabla \big( \varepsilon^{2n-2} \big( \Gamma + \lambda_{\Gamma} t \big) + W_{\varepsilon} + \phi_{\varepsilon} + \lambda_{\varepsilon} t \big) \rangle , $$
where $$\mathcal{H}_{\varepsilon} = S(\omega) + \varepsilon^{2n-2} \big( h_{\Gamma} + \varrho (f_{\Gamma}) \big) + h_{\varepsilon} + \varrho (f_{\varepsilon}).$$ 
The scalar curvature $S_{\varepsilon} = S \big( \omega_{\varepsilon} \big)$ satisfies $$\| h_{\varepsilon} \| + \| f_{\varepsilon} \| + | S_{\varepsilon} - S(\omega)| \leq c \varepsilon^{\theta}.$$ 
Further, $$ \| \phi_{\varepsilon} \big( r_{\varepsilon} x \big) \|_{C^{4,\alpha} (\overline{B}_2 \setminus B_{1})} \leq c r_{\varepsilon}^{4}$$ for all $j$.

If $\phi_{\varepsilon}, \lambda_{\varepsilon}$ and $\widetilde{\phi}_{\varepsilon}, \widetilde{\lambda}_{\varepsilon}$ are determined by the data $\upsilon_j, \varsigma_j$ and $\widetilde{\upsilon}_j, \widetilde{\varsigma}_j$, respectively (but with the same choice of $a_j$), with corresponding holomorphy potentials $h_{\varepsilon}, f_{\varepsilon}$ and $\widetilde{h}_{\varepsilon}, \widetilde{f}_{\varepsilon}$, and scalar curvatures $S_{\varepsilon}, \widetilde{S}_{\varepsilon}$, then
\begin{align}\label{outvariationbnd}  \| h_{\varepsilon} - \widetilde{h}_{\varepsilon} \| +&\| f_{\varepsilon} - \widetilde{f}_{\varepsilon} \| + \| \lambda_{\varepsilon} - \widetilde{\lambda}_{\varepsilon} \| + | S_{\varepsilon} - \widetilde{S}_{\varepsilon} | \\ 
+ \sup_{j} \| \phi_{\varepsilon | \overline{B}^j_{2 r_{\varepsilon}} \setminus B^j_{ r_{\varepsilon}}} & \big( r_{\varepsilon} x \big) - \widetilde{\phi}_{\varepsilon | \overline{B}^j_{2 r_{\varepsilon}} \setminus B^j_{ r_{\varepsilon}}} \big( r_{\varepsilon} x \big) \|_{C^{4,\alpha} (\overline{B}_2 \setminus B_{1})} \nonumber \\
\leq & c_{\tau} \varepsilon^{\theta} \sum_{j} \bigg( \| \upsilon_j - \widetilde{\upsilon}_j \|_{C^{4,\alpha}} + \| \varsigma_j - \widetilde{\varsigma}_j \|_{C^{2,\alpha}} \bigg),
\end{align}
where $c_{\tau}$ depends only on $\tau$.
\end{prop}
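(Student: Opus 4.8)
The plan is to follow the Arezzo--Pacard--Singer strategy for the exterior piece, replacing their compact linear theory by the doubly weighted Poincar\'e type theory of Section \ref{lintheory}, and to track the extra holomorphy potential datum $f_{\varepsilon} \in \mathfrak{h}_D$ throughout. First I would set up the equation: write the approximate K\"ahler form as $\omega + \ddb(\varepsilon^{2n-2}(\Gamma + \lambda_{\Gamma} t) + W_{\varepsilon})$ on $Y_{\varepsilon}$, where $\Gamma, \lambda_{\Gamma}, h_{\Gamma}, f_{\Gamma}$ are fixed by \eqref{gammadefn} and $W_{\varepsilon}$ is the pulled-back biharmonic function of \eqref{Wepsdefn} encoding the boundary data $\upsilon_j, \varsigma_j$. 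The extremal equation for the perturbation $\omega_{\varepsilon} = \omega_{\text{app}} + \ddb(\phi_{\varepsilon} + \lambda_{\varepsilon} t)$ amounts to solving a nonlinear equation of the form $\Phi(\phi_{\varepsilon}, \lambda_{\varepsilon}, f_{\varepsilon}) = \mathcal{Q}(\phi_{\varepsilon}, \lambda_{\varepsilon}) + E_{\varepsilon}$, where $\Phi$ is the linearised operator from Proposition \ref{dwtdexplicit} (augmented by the $\varrho(f)$ term and by a term absorbing the variation of the scalar curvature into $h_{\varepsilon} \in \overline{\mathfrak t}$), $\mathcal{Q}$ collects the quadratic-and-higher terms, and $E_{\varepsilon}$ is the error of the approximate solution. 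The key points are (i) estimating $E_{\varepsilon}$ in $C^{0,\alpha}_{\delta-4,\eta}$ as $O(\varepsilon^{\theta})$, which comes from the mismatch of the Burns--Simanca expansion with $\omega + \ddb(\varepsilon^{2n-2}(\Gamma+\lambda_\Gamma t))$ in the annulus $B^j_{2r_\varepsilon}\setminus B^j_{r_\varepsilon}$ together with the cut-off errors from $W_{\varepsilon}$, exactly as in \cite{arezzopacardsinger11} away from the divisor, and contributes nothing new near $D$ since $\Gamma$ decays there; and (ii) bounding $\mathcal Q$: since the weights $(\delta,\eta)$ are chosen with $\delta \in (4-2n,0)$ and $\eta \in (-\kappa,0)$, multiplication is controlled by the doubly weighted H\"older algebra properties, giving a quadratic estimate on small balls of the appropriate scale.

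Next I would invert $\Phi$. By Proposition \ref{dwtdexplicit}, $\Phi : C^{4,\alpha}_{\delta,\eta} \times \mathbb{R} \times \mathfrak{h}_D \to \widetilde{C}^{0,\alpha}_{\delta-4,\eta}$ has image $(\overline{\mathfrak{h}^D_{//}})^{\perp}$ and kernel $\textnormal{Ker}(\mathcal{D}^*\mathcal{D})_{C^{4,\alpha}_\eta}\times\{0\}\times\{0\}$; after quotienting the domain by this kernel (which consists of potentials for vector fields in $\mathfrak{h}^D_0$, imposed via the normalisation of $\phi_\varepsilon$ on the boundary, as in the compact case) one gets a bounded right inverse $P$. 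The point is that this right inverse is \emph{$\varepsilon$-independent}, since $\omega$ itself does not depend on $\varepsilon$ — the $\varepsilon$-dependence is entirely in the approximate metric and hence in $E_\varepsilon$ and $\mathcal Q$. The scalar curvature component: the value $S_\varepsilon$ is read off from the $\langle 1\rangle$-component and the $h_\varepsilon \in \overline{\mathfrak t}$ component is the projection onto the (lifted) $\mathfrak t$-part, while $f_\varepsilon \in \mathfrak h_D$ is the $V_1$-component of the solution (the $V_2$-part being forced to vanish is what \eqref{extremalvf} will later guarantee, but that is deferred to the gluing step and is not part of this proposition). Then a standard fixed-point argument in a ball of radius $\sim\varepsilon^\theta$ in $C^{4,\alpha}_{\delta,\eta}\times\mathbb{R}\times\mathfrak h_D$ — using $\|E_\varepsilon\|\leq c\varepsilon^\theta$, $\|\mathcal Q(u)-\mathcal Q(u')\| \leq c(\|u\|+\|u'\|)\|u-u'\|$, and boundedness of $P$ — produces a unique small solution. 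The weighted estimate $\|\phi_\varepsilon(r_\varepsilon x)\|_{C^{4,\alpha}(\overline B_2\setminus B_1)} \leq c r_\varepsilon^4$ follows from the definition of the doubly weighted norm applied at scale $r_\varepsilon$, together with $\|\phi_\varepsilon\|_{C^{4,\alpha}_{\delta,\eta}} \leq c\varepsilon^\theta$ and the relation between $\delta$, $r_\varepsilon = \varepsilon^{\frac{2n-1}{2n+1}}$ and $\theta$; likewise $\|h_\varepsilon\|+\|f_\varepsilon\|+|S_\varepsilon-S(\omega)|\leq c\varepsilon^\theta$.

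For the variation estimate \eqref{outvariationbnd}, I would take two boundary data sets $\upsilon_j,\varsigma_j$ and $\widetilde\upsilon_j,\widetilde\varsigma_j$ (same $a_j$, so the same $\Gamma,\lambda_\Gamma,h_\Gamma,f_\Gamma$), write the two fixed-point equations, subtract, and estimate the difference of solutions. The linear term in the difference of the boundary data enters through $W_\varepsilon - \widetilde W_\varepsilon$, which by Proposition \ref{secondapprox} and the scaling \eqref{Wepsdefn} is controlled by $r_\varepsilon^{3-2n}$ times a $C^{k,\alpha}(\partial B_1)$-norm of the difference of the data, and translating through the weight bookkeeping (as in the compact case, where the analogous factor $R_\varepsilon^{1-\delta}$ appears in \eqref{intvariationbnd}) produces the factor $c_\tau\varepsilon^\theta$; the quadratic terms contribute only higher-order corrections since both solutions lie in the $\varepsilon^\theta$-ball. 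The differences $h_\varepsilon-\widetilde h_\varepsilon$, $f_\varepsilon-\widetilde f_\varepsilon$, $\lambda_\varepsilon-\widetilde\lambda_\varepsilon$, $S_\varepsilon-\widetilde S_\varepsilon$ are then read off the corresponding components of the difference of the solutions and obey the same bound.

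The main obstacle I anticipate is bookkeeping rather than conceptual: keeping the interaction between the two weight systems under control, in particular ensuring that the cut-off functions defining $W_\varepsilon$ (supported near the $p_j$, away from $D$) genuinely do not disturb the $\eta$-weighted behaviour near $D$ — this is where one uses that $\chi$ in the modified H\"older space construction of Section \ref{modifiedholder} is chosen to vanish near the blow-up points — and that the error terms coming from the model-operator approximation \eqref{metricasymptotics} near $D$ (which are $O(e^{\eta' t})$ with $\eta' < 0$) are genuinely absorbed into the target space $C^{0,\alpha}_{\delta-4,\eta}$. A second, more subtle point is the appearance of the extra datum $f_\varepsilon \in \mathfrak h_D$: one must verify that solving $\Phi(\phi,\lambda,f) = (\text{RHS})$ with $f$ free is exactly what the surjectivity statement $\textnormal{Im}\,\Phi = (\overline{\mathfrak h^D_{//}})^\perp$ of Proposition \ref{dwtdexplicit} provides, and that the nonlinear remainder $\mathcal{Q}(\phi,\lambda)$ still lands in $(\overline{\mathfrak h^D_{//}})^\perp$ — this holds because $\mathcal Q$ is a difference of scalar curvatures of metrics in the fixed class, hence $L^2$-orthogonal to the holomorphy potentials of $\overline{\mathfrak h^D_{//}}$ by the same integration-by-parts argument as Lemma \ref{ptintbyparts}.
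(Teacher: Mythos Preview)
Your overall strategy matches the paper's: recast the equation on $Y_{\varepsilon}$ as a fixed-point problem using the right inverse $P$ from Proposition~\ref{dwtdexplicit}, estimate the error of the approximate solution and the quadratic remainder, and run a contraction argument. The division into the estimate of $E_{\varepsilon}$, the quadratic bound on $\mathcal{Q}$, and the variation estimate by subtracting fixed-point equations is exactly the paper's line.

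There is however one genuine technical gap. You invoke the right inverse $P$ from Proposition~\ref{dwtdexplicit}, which is defined on the doubly weighted space over the \emph{full} punctured manifold $Y = X \setminus (D \cup \{p_1,\dots,p_k\})$, and you assert it is $\varepsilon$-independent. But the nonlinear equation you need to solve lives on $Y_{\varepsilon}$, the complement of balls of radius $r_{\varepsilon}$, and the right-hand side $Q_{\varepsilon}(\phi,\lambda,h,f)$ is only defined there. You never say how to apply $P$ to a function defined only on $Y_{\varepsilon}$. The paper closes this gap via an explicit extension operator $\mathcal{E}_{\varepsilon} : C^{0,\alpha}_{\delta-4,\eta}(Y_{\varepsilon}) \to C^{0,\alpha}_{\delta-4,\eta}(Y)$ (a cut-off and rescaled extension across each $B^j_{r_\varepsilon}$), bounded independently of $\varepsilon$, and then sets $\mathcal{N}_{\varepsilon} = P \circ \mathcal{E}_{\varepsilon} \circ Q_{\varepsilon}$. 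Without this device, the phrase ``the right inverse is $\varepsilon$-independent'' is not enough: either you are implicitly working on an $\varepsilon$-dependent domain (and then uniform bounds on an inverse need proof) or you are tacitly extending without controlling the extension.

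A second, smaller issue: your claim that $\mathcal{Q}$ automatically lands in $(\overline{\mathfrak{h}^D_{//}})^{\perp}$ because it is ``a difference of scalar curvatures'' is not quite how the paper proceeds, and is not obviously true for the \emph{remainder} $S(\omega_\phi) - S(\omega) - L(\phi)$ rather than for a scalar curvature itself. The paper sidesteps this by enlarging the domain of the linear operator to include the variable $h \in \overline{\mathfrak{h}}$ (not just $\overline{\mathfrak{t}}$), so that $(\phi,\lambda,h,f) \mapsto \mathcal{D}^*\mathcal{D}(\phi+\lambda t) + h + \varrho(f)$ is surjective onto all of $\widetilde{C}^{0,\alpha}_{\delta-4,\eta}$ and no orthogonality check is needed; the $\overline{\mathfrak{h}}$-component of the output is then what is called $h_{\varepsilon}$. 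That $h_\varepsilon$ actually lies in $\overline{\mathfrak{t}}$ is a consequence of $T$-invariance of all the data, not of an a priori orthogonality argument.
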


The proof of Proposition \ref{exteriorsoln} is via the Contraction Mapping Theorem. The idea is to use an extension operator to rewrite the equation as a fixed-point problem on the punctured manifold, and there apply the results of Section \ref{lintheory} to show that for sufficiently small boundary data, the operator that we are seeking a fixed point of indeed has a solution, provided $\varepsilon>0$ is sufficiently small.

We want to solve the equation 
\begin{align*} S\bigg(\omega + \ddb \big( \widetilde{\phi} + \widetilde{\lambda}t \big) \bigg) = \widetilde{h} + \varrho(\widetilde{f} ) + \frac{1}{2} \langle \nabla \big( \widetilde{h} + \varrho(\widetilde{f} ) \big), \nabla \big( \widetilde{\phi}  + \widetilde{\lambda}t \big) \rangle,
\end{align*}
for a function $\widetilde{\varphi}$, holomorphy potential $\widetilde{h}$, constant $\widetilde{\lambda}$ and zero average holomorphy potential $\widetilde{f}$ on $D$.  We want to recast this as a perturbation problem, using the approximate solutions of the previous sections. Using the functions $\Gamma, f_{\Gamma}, h_{\Gamma}$ and constant $\lambda_{\Gamma}$ of equation \eqref{gammadefn}, as well as $W_{\varepsilon}$ of \eqref{Wepsdefn} corresponding to our choice of $a_j, \upsilon_j$ and $\varsigma_j$, we expand 
\begin{align*} \widetilde{\phi} =& \varepsilon^{2n-2} \Gamma  + W_{\varepsilon} + \phi ,\\
 \widetilde{h} =& S(\omega)  + \varepsilon^{2n-2} h_{\Gamma} + h , \\
\widetilde{\lambda} =&\varepsilon^{2n-2} \lambda_{\Gamma}  + \lambda \\
\widetilde{f} =& \varepsilon^{2n-2} f_{\Gamma} + f.
\end{align*}
As we will see in Proposition \ref{finalestimatesprop}, $(\phi, h, \lambda, f) = (0,0,0,0)$ then gives a good approximate solution to the extremal equation.

The equation we wish to solve for $\phi$ and $h$ can then be written
\begin{gather*} S \bigg(\omega + \ddb \big( \varepsilon^{2n-2} ( \Gamma + \lambda_{\gamma}t ) + W_{\varepsilon} + \phi \big) \bigg) \\ 
= S(\omega) + \varepsilon^{2n-2} \big( h_{\Gamma} + \varrho (f_{\Gamma} ) \big) + h + \varrho (f_{\gamma}) \\
 + \frac{1}{2} \langle \nabla \big( S(\omega) + \varepsilon^{2n-2} ( h_{\Gamma}+\varrho (f_{\gamma}) ) + h + \varrho(f) \big), \nabla \big( \varepsilon^{2n-2} (\Gamma + \lambda_{\Gamma}t) + W_{\varepsilon} + \phi + \lambda t \big) \rangle 
\end{gather*}

Let $L$ be the linearisation of $S\big( \omega + \ddb ( \cdot ) \big)$ at $0$. Then $L$ is given by $$L(\cdot) = - \mathcal{D}^*_{\omega} \mathcal{D}_{\omega} + \frac{1}{2} \langle \nabla (S(\omega)), \nabla (\cdot) \rangle.$$ We also have an expansion
\begin{align*} S\big( \omega + \ddb ( \cdot ) \big) &= S(\omega) + L ( \cdot ) + Q (\cdot),
\end{align*}
for some non-linear operator $Q$. In particular, we can rewrite the above equation as
\begin{align*} &\mathcal{D}^*_{\omega} \mathcal{D}_{\omega} \big( \phi + \lambda t\big) + h + \varrho(f) \\
=& Q \big( \varepsilon^{2n-2} ( \Gamma + \lambda_{\gamma}t ) + W_{\varepsilon} + \phi + \lambda t\big)  - \varepsilon^{2n-2} \big( h_{\Gamma} + \varrho ( f_{\gamma} ) \big)  \\
& - \mathcal{D}^*_{\omega} \mathcal{D}_{\omega} \big(\varepsilon^{2n-2} (\Gamma+\lambda_{\gamma}t) + W_{\varepsilon}  \big) \\
 &- \frac{1}{2} \langle \nabla \big( \varepsilon^{2n-2} ( h_{\Gamma} + \varrho (f_{\gamma}) )  + h + \varrho (f) \big), \nabla \big( \varepsilon^{2n-2} ( \Gamma + \lambda_{\gamma}t) + W_{\varepsilon} + \phi + \lambda t \big) \rangle \\
=& Q \big(  \varepsilon^{2n-2} ( \Gamma + \lambda_{\gamma}t)  + W_{\varepsilon} + \phi + \lambda t\big)  - \mathcal{D}^*_{\omega} \mathcal{D}_{\omega} \big( W_{\varepsilon}  \big) \\
&- \frac{1}{2} \langle \nabla \big( \varepsilon^{2n-2} ( h_{\Gamma} + \varrho (f_{\Gamma} ) ) + h + \varrho (f) \big), \nabla \big( \varepsilon^{2n-2} (\Gamma+\lambda_{\Gamma}t) + W_{\varepsilon} + \phi +\lambda t \big) \rangle,
\end{align*}
using that $\mathcal{D}^* \mathcal{D} \big( \Gamma + \lambda_{\Gamma} \big)  = h_{\Gamma} + \varrho(f_{\Gamma})$ away from the blown-up points. We will let $Q_{\varepsilon}$ denote the right hand side of this equation, i.e. the operator 
\begin{align*} &Q \big(  \varepsilon^{2n-2} ( \Gamma + \lambda_{\gamma}t)  + W_{\varepsilon} + \phi + \lambda t\big)  - \mathcal{D}^*_{\omega} \mathcal{D}_{\omega} \big( W_{\varepsilon}  \big) \\
&- \frac{1}{2} \langle \nabla \big( \varepsilon^{2n-2} ( h_{\Gamma} + \varrho (f_{\Gamma} ) ) + h + \varrho (f) \big), \nabla \big( \varepsilon^{2n-2} (\Gamma+\lambda_{\Gamma}t) + W_{\varepsilon} + \phi +\lambda t \big) \rangle
\end{align*}

By Proposition \ref{dwtdexplicit}, the operator $$C_{\delta, \eta}^{4,\alpha} (Y) \times \mathbb{R} \times \overline{\mathfrak{h}} \times \mathfrak{h}_D \rightarrow \widetilde{C^{0,\alpha}_{\delta-4, \eta}} (Y) $$ given by $$ (\phi, \lambda, h, f) \mapsto \mathcal{D}^* \mathcal{D}\big( \phi + \lambda t \big) + h + \varrho (f)$$ has a right inverse $P$ when $\delta \in (4-2n, 0)$ and $\eta \in (- \kappa, 0)$. Note that we are here using the decomposition of $\overline{\mathfrak{h}}_D$ as $\mathbb{R} \times \mathfrak{h}_D$ like in Lemma \ref{modifiedkercokerlem}. If our functions were defined everywhere except the blown-up points, we could then apply $P$ to our original equation to recast it as a fixed point problem. However, since our functions are only defined on the complement $Y_{\varepsilon}$ of balls around the blown-up points, we cannot do this directly. Following Arezzo-Pacard-Singer, the remedy for this is to define an extension operator $$\mathcal{E} = \mathcal{E}_{\varepsilon}: C^{4,\alpha}_{\delta, \eta} (Y_{\varepsilon}) \rightarrow C^{4,\alpha}_{\delta, \eta} (Y),$$ and apply this to the equation before applying $P$. 

The extension operator is defined as follows. At a scale $r$ it is defined to be 
\begin{itemize} \item $\mathcal{E} (f) =f$ outside $\cup_{j=1}^k \overline{B}^j_r$ 
 \item $\mathcal{E} (f) (z^j) = \frac{2|z^j|-r}{r} f(r \frac{z^j}{|z^j|})$ in $\overline{B}^j_r \setminus B^j_{r/2}$
\item $\mathcal{E}(f) = 0$ in each $\overline{B}^j_{r/2}$.
\end{itemize}
We will let $\mathcal{E}_{\varepsilon}$ denote the above operator on the scale $r_{\varepsilon}$. A key property for us is that, independently of $\varepsilon$, $\mathcal{E}_{\varepsilon}$ is a bounded operator $C^{4,\alpha}_{\delta, \eta} (Y_{\varepsilon}) \rightarrow C^{4,\alpha}_{\delta, \eta} (Y)$.

Using the extension operator, we can then rewrite the equation as a fixed point problem
\begin{align}\label{fixedptproblem}  (\phi, \lambda, h, f) = \mathcal{N}_{\varepsilon} (\phi, \lambda, h, f)  ,
\end{align}
where $\mathcal{N}_{\varepsilon}$ is the operator
\begin{align}\label{fixedptop} \mathcal{N}_{\varepsilon}= P \circ \mathcal{E}_{\varepsilon} \circ Q_{\varepsilon}.
\end{align}
A solution to the fixed point problem will then give a solution to the extremal equation on $Y_{\varepsilon}$. Note that $\mathcal{Q}_{\varepsilon}$, and therefore also $\mathcal{N}_{\varepsilon}$, depends on the boundary data $\upsilon_j, \varsigma_j$.

The fixed point is guaranteed by the Contraction Mapping Theorem once the following Proposition is proved. Recall that the boundary data $\underline{\upsilon} = (\upsilon_1, \cdots, \upsilon_k) \in \big( C^{4,\alpha} ( \partial B_1) \big)^k$ and $\underline{\varsigma} = (\varsigma_1, \cdots, \varsigma_k) \in \big( C^{2,\alpha} ( \partial B_1) \big)^k$ is assumed to satisfy the estimate \eqref{boundarycondout} and condition \eqref{kcond}. Below we will let $\mathcal{N} = \mathcal{N}_{\varepsilon}$ denote the operator \eqref{fixedptop} associated to this boundary data. We will also let $\xi= (\phi, \lambda, h, f) $ and similarly for $\xi'$. 

\begin{prop}\label{finalestimatesprop} For each $\tau > 0$ there is a $c_{\tau} > 0$ and $\varepsilon_{\tau}>0$ such that for all $\varepsilon \in (0,\varepsilon_{\tau})$, 
\begin{align}\label{approxsolestimate} \| \mathcal{N} (0,0,0,0) \| \leq c_{\tau} \big( r_{\varepsilon}^{2n+1} + \varepsilon^{4n-4} r_{\varepsilon}^{6-4n-\delta}\big)
\end{align}
and
\begin{align}\label{changeinputestimate} \| \mathcal{N} (\xi) - \mathcal{N} (\xi') \| \leq  c_{\tau} \varepsilon^{2n-2} r_{\varepsilon}^{6-4n-\delta} \| \xi - \xi'\|,
\end{align}
provided $\xi, \xi'$ have norm at most $2c_{\tau}( r_{\varepsilon}^{2n+1} + \varepsilon^{4n-4} r_{\varepsilon}^{6-4n-\delta})$.

Moreover, if $\widetilde{\mathcal{N}}$ is the map associated to a different choice of boundary data $\widetilde{\upsilon}_j, \widetilde{\varsigma}_j$ also satisfying \eqref{boundarycondout} and  \eqref{kcond} then
\begin{align}\label{changeinitialestimate} \| \mathcal{N}(\xi) - \widetilde{\mathcal{N}} (\xi) \| \leq c_{\tau} \big( r_{\varepsilon}^{2n-3} + \varepsilon^{2n-2} r_{\varepsilon}^{2-2n-\delta}\big) \| (\underline{\upsilon} - \widetilde{\underline{\upsilon}},\underline{\varsigma} - \widetilde{\underline{\varsigma}}) \|
\end{align}
for all $\xi$ satisfying $\| \xi \| \leq 2 c_{\tau}\big( r_{\varepsilon}^{2n+1} + \varepsilon^{4n-4} r_{\varepsilon}^{6 - 4n - \delta} \big).$ 
\end{prop}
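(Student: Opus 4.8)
The plan is to prove Proposition \ref{finalestimatesprop} by carefully tracking the sizes of all the terms entering $\mathcal{Q}_{\varepsilon}$ and $\mathcal{N}_{\varepsilon} = P \circ \mathcal{E}_{\varepsilon} \circ Q_{\varepsilon}$, exactly as in \cite[Sec. 6]{arezzopacardsinger11}, but keeping track of the extra weight $\eta$ near $D$ and the extra factors of $\varepsilon^{2n-2}$ coming from the $\Gamma$-term of equation \eqref{gammadefn}. The key inputs are: (i) the right inverse $P$ from Proposition \ref{dwtdexplicit}, whose operator norm is bounded independently of $\varepsilon$ on the doubly weighted spaces $C^{4,\alpha}_{\delta,\eta}$ for $\delta \in (4-2n,0)$ and $\eta \in (-\kappa,0)$; (ii) the uniform boundedness of the extension operator $\mathcal{E}_{\varepsilon}$; (iii) the estimates on $W_{\varepsilon}$ coming from Proposition \ref{secondapprox} together with the scaling $W_{\varepsilon}(z) = \chi_j(z) W(z/r_{\varepsilon})$; and (iv) the quadratic estimate on the non-linear remainder $Q$ of $S(\omega + \ddb(\cdot))$, which in the doubly weighted setting takes the form $\|Q(u) - Q(u')\| \leq c(\|u\| + \|u'\|)\|u - u'\|$ on a ball of small radius in $C^{4,\alpha}_{\delta,\eta}$, where $c$ is controlled once $\varepsilon$ is small because the metrics $\omega + \ddb(\varepsilon^{2n-2}(\Gamma + \lambda_{\Gamma}t) + W_{\varepsilon})$ are uniformly quasi-isometric to $\omega$.

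First I would establish \eqref{approxsolestimate}: evaluate $\mathcal{Q}_{\varepsilon}(0,0,0,0) = Q(\varepsilon^{2n-2}(\Gamma + \lambda_{\Gamma}t) + W_{\varepsilon}) - \mathcal{D}^*\mathcal{D}(W_{\varepsilon}) - \tfrac12 \langle \nabla(\varepsilon^{2n-2}(h_{\Gamma} + \varrho(f_{\Gamma}))), \nabla(\varepsilon^{2n-2}(\Gamma + \lambda_{\Gamma}t) + W_{\varepsilon})\rangle$. The term $\mathcal{D}^*\mathcal{D}(W_{\varepsilon})$ is supported in the annuli $B^j_{2r_{\varepsilon}} \setminus B^j_{r_{\varepsilon}}$, where $W_{\varepsilon}$ is a cut-off of a biharmonic function, so its bi-Laplacian is an error from the cut-off of size $O(r_{\varepsilon}^{2n+1})$ in $C^{0,\alpha}_{\delta-4,\eta}$ after accounting for the volume of the annulus and the weight $r^{-\delta}$ in the doubly weighted norm (this is the $r_{\varepsilon}^{2n+1}$ term). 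The term $Q(\varepsilon^{2n-2}(\Gamma + \lambda_{\Gamma}t) + W_{\varepsilon})$ is quadratic in its (small) argument, yielding contributions bounded by $c(\varepsilon^{4n-4}\|\Gamma + \lambda_{\Gamma}t\|^2 + \|W_{\varepsilon}\|^2)$; tracking the scaling of $W_{\varepsilon}$ in the weighted norm near the blown-up points gives the $\varepsilon^{4n-4}r_{\varepsilon}^{6-4n-\delta}$ term, and the $\Gamma$-contribution is absorbed since $\varepsilon^{4n-4}$ is the smallest power present. The gradient-pairing term is similarly quadratic and bounded by the same quantities. Applying $\|P\| \cdot \|\mathcal{E}_{\varepsilon}\| \leq c$ gives \eqref{approxsolestimate}.

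Next, for the contraction estimate \eqref{changeinputestimate}, I would write $\mathcal{Q}_{\varepsilon}(\xi) - \mathcal{Q}_{\varepsilon}(\xi')$ as a sum of a difference of $Q$-terms and a difference of gradient-pairing terms (the $\mathcal{D}^*\mathcal{D}(W_{\varepsilon})$ term does not depend on $\xi$ and drops out). Using the Lipschitz-type bound on $Q$ together with the fact that the background perturbation $\varepsilon^{2n-2}(\Gamma + \lambda_{\Gamma}t) + W_{\varepsilon}$ plus the admissible $\xi, \xi'$ all have norm $O(\varepsilon^{2n-2}r_{\varepsilon}^{6-4n-\delta})$ on the given ball, one gets a factor of $\varepsilon^{2n-2}r_{\varepsilon}^{6-4n-\delta}$ multiplying $\|\xi - \xi'\|$; the gradient-pairing differences are handled the same way, noting $\|h_{\varepsilon}\| + \|f_{\varepsilon}\|$ enter linearly there but are themselves controlled by $\|\xi\|$. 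Again composing with $P$ and $\mathcal{E}_{\varepsilon}$ preserves the bound. For \eqref{changeinitialestimate}, the only $\varepsilon$-data entering $\mathcal{Q}_{\varepsilon}$ through the boundary values $\upsilon_j, \varsigma_j$ is via $W_{\varepsilon}$; so $\mathcal{Q}_{\varepsilon}(\xi) - \widetilde{\mathcal{Q}}_{\varepsilon}(\xi)$ is a difference involving $W_{\varepsilon} - \widetilde{W}_{\varepsilon}$, which by the linear estimate of Proposition \ref{secondapprox} and the scaling is bounded by $c(r_{\varepsilon}^{2n-3} + \varepsilon^{2n-2}r_{\varepsilon}^{2-2n-\delta})\|(\underline{\upsilon} - \widetilde{\underline{\upsilon}}, \underline{\varsigma} - \widetilde{\underline{\varsigma}})\|$ — the first term from $\mathcal{D}^*\mathcal{D}(W_{\varepsilon} - \widetilde{W}_{\varepsilon})$ and the second from the quadratic and pairing terms.

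The main obstacle I anticipate is bookkeeping the interaction between the two weight systems: one must check that the extra cokernel terms $\varrho(f)$ and the $\lambda t$-direction do not spoil the uniform boundedness of $P$ as $\varepsilon \to 0$, and that the $t$-weighted pieces (which decay like $e^{\eta t}$, not in the $\delta$-direction) genuinely do not interact with the blow-up scale — this is where the choice in Section \ref{lintheory} that $\chi$ vanishes near the blown-up points is used. Beyond this, everything is parallel to \cite{arezzopacardsinger11}: the quadratic estimate on $Q$ in weighted norms, the scaling behaviour of $W_{\varepsilon}$ under $z \mapsto r_{\varepsilon}^{-1}z$, and the standard fact that cutting off a $\Delta^2$-harmonic function introduces an error supported on the cut-off annulus of controlled size. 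Once Proposition \ref{finalestimatesprop} is in place, the Contraction Mapping Theorem on the ball of radius $2c_{\tau}(r_{\varepsilon}^{2n+1} + \varepsilon^{4n-4}r_{\varepsilon}^{6-4n-\delta})$ in $C^{4,\alpha}_{\delta,\eta}(Y_{\varepsilon}) \times \mathbb{R} \times \overline{\mathfrak{t}} \times \mathfrak{h}_D$ yields the fixed point, and reading off the norms of the resulting $\phi_{\varepsilon}, \lambda_{\varepsilon}, h_{\varepsilon}, f_{\varepsilon}$ and the Lipschitz dependence on the boundary data gives Proposition \ref{exteriorsoln}.
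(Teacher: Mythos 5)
Your proposal follows essentially the same route as the paper: isolate $\mathcal{Q}_{\varepsilon}(0,0,0,0)$ into its three constituent terms, use the quadratic/Lipschitz estimates on $Q$ and on the difference $L_{\omega_{\varphi}}-L_{\omega}$ from \cite{szekelyhidi12} in the doubly weighted norms, reduce \eqref{changeinputestimate} via the Mean Value Theorem to a bound on the linearisation, and track the change in $W_{\varepsilon}$ for \eqref{changeinitialestimate}. Your observations about the uniform boundedness of $P$ and $\mathcal{E}_{\varepsilon}$, and about the fact that the cut-off $\chi$ in the $\varrho$-direction is chosen to vanish near the blow-up points precisely so the two weight systems do not interact, are exactly the points the paper leans on.

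However, your account of the bound on $\mathcal{D}^*_{\omega}\mathcal{D}_{\omega}(W_{\varepsilon})$ is not correct as written. You assert that this term ``is supported in the annuli $B^j_{2r_{\varepsilon}}\setminus B^j_{r_{\varepsilon}}$'' and that it is ``an error from the cut-off.'' Neither is the case: the cut-off $\chi_j$ entering the definition \eqref{Wepsdefn} of $W_{\varepsilon}$ is supported at the \emph{fixed} scale $B^j_2\setminus B^j_1$, not at scale $r_{\varepsilon}$, and $\mathcal{D}^*\mathcal{D}(W_{\varepsilon})$ is generically nonzero on all of $B^j_2\setminus B^j_{r_{\varepsilon}}$. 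The correct mechanism, which the paper uses, is this: in $B^j_1\setminus B^j_{r_{\varepsilon}}$ the function $W_{\varepsilon}(z)=W(z/r_{\varepsilon})$ is \emph{Euclidean} biharmonic, so $\Delta^2 W_{\varepsilon}=0$ there and $\mathcal{D}^*_{\omega}\mathcal{D}_{\omega}(W_{\varepsilon})=(\mathcal{D}^*_{\omega}\mathcal{D}_{\omega}-\Delta^2)(W_{\varepsilon})$; since the coordinates are normal at $p_j$ the operator $\mathcal{D}^*_{\omega}\mathcal{D}_{\omega}-\Delta^2$ has coefficients of order $O(|z|^2)$, and this gives the required decay near the puncture. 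The cut-off error is a separate contribution, localised at the fixed scale $|z|\in(1,2)$ where $W_{\varepsilon}$ itself is already of size $O(r_{\varepsilon}^{2n+1})$ by the boundary hypothesis \eqref{boundarycondout} and the decay $W=O(|z|^{3-2n})$ from Proposition \ref{secondapprox}; that is the origin of the $r_{\varepsilon}^{2n+1}$ term. As stated, your argument would not close, because it leaves the dominant contribution in $B^j_1\setminus B^j_{r_{\varepsilon}}$ unaccounted for. Once this is repaired your proposal matches the paper's.
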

In the above Proposition, the norm on the right hand side of \eqref{changeinitialestimate} is the product norm on $\big( C^{4, \alpha} (\partial B_1) \big)^{k}  \times \big( C^{2, \alpha}(\partial B_1) \big)^{k}$. 

This Proposition allows us to use the Contraction Mapping Theorem, because the estimate \eqref{changeinputestimate} shows that $\mathcal{N}$ is a contraction on the set 
\begin{align} \{ \xi \in C^{4,\alpha}_{\delta, \eta} (Y_{\varepsilon} ) \times \mathbb{R} \times \overline{\mathfrak{h}} \times \mathfrak{h}_D : \| \xi \| \leq 2 c_{\tau} \big( r_{\varepsilon}^{2n+1} + \varepsilon^{4n-4} r_{\varepsilon}^{6-4n-\delta} \big) \} ,
\end{align}
provided we (potentially) reduce $\varepsilon_{\tau}$ such that $$ c_{\tau} \leq \frac{1}{2} \varepsilon^{2n-2} r_{\varepsilon}^{6-4n-\delta}$$ for all $\varepsilon \in (0, \varepsilon_{\tau})$. Moreover, \eqref{approxsolestimate} shows that the origin is in this set. Finally, the estimate \eqref{changeinitialestimate} then shows that the metrics constructed when applying the Contraction Mapping Theorem satisfy the estimate \eqref{outvariationbnd}.

We will now prove Proposition \ref{finalestimatesprop}, which, by the above argument, completes the proof of Proposition \ref{exteriorsoln}. We follow very closely the argument of \cite{arezzopacardsinger11}, with some input from \cite{szekelyhidi12}.
\begin{proof} The right inverse $P$ is bounded independently of $\varepsilon$. By the boundedness of $\mathcal{E}$, it therefore suffices to establish a corresponding bound for $\mathcal{Q}_{\varepsilon} (0,0,0,0)$ to show that \eqref{approxsolestimate} holds. Note that 
\begin{align*} Q_{\varepsilon} (0,0,0, 0) =& Q \big( \varepsilon^{2n-2} ( \Gamma + \lambda_{\Gamma} t) + W_{\varepsilon} \big)  - \mathcal{D}^*_{\omega} \mathcal{D}_{\omega} \big( W_{\varepsilon}  \big)  \\
&- \frac{1}{2} \langle \nabla \big( \varepsilon^{2n-2}( h_{\Gamma}  + \varrho (f_{\Gamma} )) \big), \nabla \big( \varepsilon^{2n-2} ( \Gamma + \lambda_{\Gamma}t) + W_{\varepsilon} \big) \rangle.
\end{align*}
The latter of these terms satisfies the required bound because of linearity and so we can take the $\varepsilon$-dependent terms out as a factor (here we are using that $\nabla h_{\Gamma}$ vanishes at each blow-up point to get a sufficiently good bound). The bound on the second term follows because $$ \mathcal{D}^*_{\omega} \mathcal{D}_{\omega} \big( W_{\varepsilon}  \big) = \mathcal{D}^*_{\omega} \mathcal{D}_{\omega} \big( W_{\varepsilon}  \big) - \Delta^2 (W_{\varepsilon})$$ in the complement of $ B^j_{\varepsilon}$ in the ball of radius $1$ about $p_j$. Here $\Delta$ is the \textit{Euclidean} Laplacian. Since we are in normal coordinates, and the leading order term of $ \mathcal{D}^*_{\omega} \mathcal{D}_{\omega}$ equals $\Delta^2$, this implies the bound we require on the middle term.

We are left with estimating $Q(\varepsilon^{2n-2} (\Gamma + \lambda_{\Gamma}t ) + W_{\varepsilon}),$ and the key is to obtain an estimate near the blow-up points. To establish this bound near these points, we use the fact that for any subset $U$ of the blow-up of $X$, and negative $\delta$, there is a $c>0$ such that if some function $v$ is sufficiently small in $C^{4,\alpha}_{2} (U)$, then 
$$\| Q (v) \|_{C^{0,\alpha}_{\delta-4}(U)} \leq c \| v \|_{C^{4,\alpha}_{2} (U)} \| v \|_{C^{4,\alpha}_{\delta} (U)},$$ see \cite[Proposition 25]{szekelyhidi12}. Note that since we are applying this to a subset $U$ close the the blow-up points, we can assume that $D$ is far away from $U$, and so we are considering only the blow-up weights here, and can ignore the divisor weight $\eta$. Using the $\varepsilon$-dependence of the functions we are applying this to, we get precisely the required bound, as in the compact case.

For the second estimate, \eqref{changeinputestimate}, the boundedness of $P$ and $\mathcal{E}$ together with the Mean Value Theorem implies that it suffices to establish the bound for the linearised operator of $\mathcal{Q}_{\varepsilon}$ at a convex combination $ \varphi $ of the two functions. But the linearised operator of $\mathcal{Q}_{\varepsilon}$ at $\varphi$ equals $L_{\omega_{\varphi}} - L_{\omega}$, see \cite[Lemma 21]{szekelyhidi12}. 

Near the blown up points, this bound is similar to the bound on $Q(v)$ above. The key fact is that for any subset $U$ of the blow-up of $X$, and negative $\delta$, there is a $c>0$ such that if $\varphi$ is sufficiently small in $C^{4,\alpha}_{2} (U)$, then 
$$\| L_{\omega_{\varphi}} (v) - L_{\omega} (v) \|_{C^{0,\alpha}_{\delta-4}(U)} \leq c \| \varphi \|_{C^{4,\alpha}_{2} (U)} \| v \|_{C^{4,\alpha}_{\delta} (U)},$$ see \cite[Proposition 20]{szekelyhidi12}. This gives us exactly the required bound. Indeed, $\varphi$ is a convex combination of $\phi$ and $\phi'$, and so $\| \varphi \|_{C^{4,\alpha}_{2} (U)}$ will be bounded above by $\| \phi \|_{C^{4,\alpha}_{2} (U)}  + \| \phi' \|_{C^{4,\alpha}_{2} (U)}$. But $\| \phi \|_{C^{4,\alpha}_{2} (U)} \leq \varepsilon^{\delta-2} \| \phi \|_{C^{4,\alpha}_{\delta} (U)},$ and similarly for $\phi'$, using the comparison of weights (see e.g. \cite[p. 167]{szekelyhidi14book}). Combining this with the assumption on the $\delta$-norm of $\phi$ and $\phi'$ then gives the required inequality, after possibly reducing $\varepsilon_{\tau}$, by using that $\delta > 4 - 2n$. Near the divisor, the argument works in the same way: the two operators agree with the model one to highest order, which allows us to obtain a similar bound using the same strategy. 

Finally, the third estimate is also obtained using an analogous strategy. The key is to use the change in the functions like $W_{\varepsilon}$ that are associated to the boundary data $\upsilon_j, \varsigma_j$ in the corresponding estimates. For details, we refer to \cite{arezzopacardsinger11} and the earlier works \cite{arezzopacard06} and \cite{arezzopacard09}.
\end{proof}

\subsection{Matching the metrics} 

Following \cite{arezzopacardsinger11}, in order to see that we can match up the metrics created in the previous section, we will near where the regions $Y_{\varepsilon}$ and $\cup_{j=1}^k \Blball$ pull the potentials back to some fixed annular region $B_2 \setminus B_{\frac{1}{2}}$.

The system we need to solve is the following.
\begin{prop}[{\cite[Section 7]{arezzopacardsinger11}}]\label{matchingprop} Suppose that $\phi_j \in C^{4,\alpha} \big( B_2 \setminus B_1 \big)$ and $\psi_j \in C^{4,\alpha} \big( B_1 \setminus B_{\frac{1}{2}} \big)$ are the functions obtained via the $\varepsilon$-dependent charts from Proposition \ref{exteriorsoln} and \ref{interiorsoln}, respectively, for the same vector field in $\mathfrak{t}$, in such a way that on $\partial B_1$
\begin{align*} \psi_j =& \phi_j ,\\
\partial_r \psi_j =& \partial_r \phi_j ,\\
\Delta \psi_j =& \Delta \phi_j ,\\
\partial_r  \Delta \psi_j =& \partial_r \Delta \phi_j .
\end{align*} 
Then $\phi_j$ and $\psi_j$ glue across $\partial B_1$ to produce a smooth function on $\BlX \setminus D$ inducing a Poincar\'e type metric in the class $\Omega_{\varepsilon}$ given by equation \eqref{epsilonclass}, which is extremal provided the assumption \eqref{extremalvf} holds. 
\end{prop}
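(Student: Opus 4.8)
The plan is to run the Arezzo--Pacard--Singer matching argument of \cite{arezzopacardsinger11}, the only genuinely new input being the r\^ole of \eqref{extremalvf} in the final extremality claim. First, for the gluing: the four conditions imposed on $\partial B_1$ are exactly the matching data for a function with bi-Laplacian principal part, and together with the fact that tangential derivatives along $\partial B_1$ are determined by the boundary values they force all derivatives of $\psi_j$ and $\phi_j$ up to order $3$ to agree there. Hence $\psi_j$ and $\phi_j$ glue to a $C^{3,\alpha}$ function across $\partial B_1$, so to a $C^{3,\alpha}$ relative K\"ahler potential on $\BlX\setminus D$; call the resulting $(1,1)$-form $\omega_\varepsilon$. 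On the fixed annulus $B_2\setminus B_{1/2}$ the cut-off $\chi$ entering the operator $\varrho$ vanishes, being supported near $D$, so there the two pieces solve the \emph{same} fourth order elliptic equation, namely the extremal equation for the common vector field of $\mathfrak{t}$ figuring in Propositions \ref{interiorsoln} and \ref{exteriorsoln}. Thus the glued potential is a $C^{3,\alpha}$, hence weak, solution of this equation across $\partial B_1$, and interior elliptic regularity upgrades it to a smooth solution there; elsewhere smoothness is already known. Positivity of $\omega_\varepsilon$ for small $\varepsilon$ follows as in the compact case, since on the overlap the two pieces, and the Euclidean metric, differ from each other by $O(\varepsilon^{\theta})$ in the relevant norms.

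Next, that $\omega_\varepsilon$ is of Poincar\'e type in the class $\Omega_\varepsilon$. Near $D$ one has $\omega_\varepsilon=\omega+\ddb\big((\varepsilon^{2n-2}\lambda_\Gamma+\lambda_\varepsilon)\,t+\rho_\varepsilon\big)$, with $W_\varepsilon$ supported away from $D$ and $\rho_\varepsilon=\varepsilon^{2n-2}\Gamma+\phi_\varepsilon\in C^{4,\alpha}_\eta$ for some $\eta<0$. Adding $\ddb(kt)$ to a Poincar\'e type metric preserves the Poincar\'e type condition (it changes only the parameter $a$ of \eqref{metricasymptotics}, as recalled in Section \ref{fredholmpropssect}), and $\ddb\rho_\varepsilon=O(e^{\eta t})$ is a controlled perturbation satisfying the bounds of Definition \ref{poincaretypemetdefn}; hence $\omega_\varepsilon$ is of Poincar\'e type with singularity along $\pi^{-1}(D)\cong D$. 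For the cohomology class, the $\ddb t$, $\ddb W_\varepsilon$ and $\ddb\phi_\varepsilon$ contributions are exact on $\BlX\setminus D$, while the combination of $\ddb(\varepsilon^{2n-2}\Gamma)$ with the scaled Burns--Simanca pieces $a_j\varepsilon^2\zeta$ contributes exactly $-\varepsilon^2\sum_j a_j[E_j]$, by the same computation as in the compact case \cite{arezzopacardsinger11} (using that the singular part of $\mathcal{D}^*\mathcal{D}(\Gamma+\lambda_\Gamma t)$ is $-\sum_j a_j^{n-1}\delta_{p_j}$, see \eqref{gammadefn}). Thus $[\omega_\varepsilon]=\pi^*\Omega-\varepsilon^2\sum_j a_j[E_j]=\Omega_\varepsilon$.

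It remains to discuss extremality. On each $\Blball$ the metric $\omega_\varepsilon$ is extremal by Proposition \ref{interiorsoln}. On $Y_\varepsilon$, Proposition \ref{exteriorsoln} gives $S(\omega_\varepsilon)=\mathcal{H}_\varepsilon+\tfrac12\langle\nabla\mathcal{H}_\varepsilon,\nabla(\,\cdot\,)\rangle$ with $\mathcal{H}_\varepsilon=S(\omega)+\varepsilon^{2n-2}\big(h_\Gamma+\varrho(f_\Gamma)\big)+h_\varepsilon+\varrho(f_\varepsilon)$. If the term $\varepsilon^{2n-2}\varrho(f_\Gamma)+\varrho(f_\varepsilon)$ vanishes, then $\mathcal{H}_\varepsilon$ is an honest holomorphy potential, of a holomorphic vector field $V$ on $X$ tangent to $D$, and the displayed identity says precisely that $S(\omega_\varepsilon)$ is the holomorphy potential of $V$ with respect to $\omega_\varepsilon$, so $\nabla^{1,0}S(\omega_\varepsilon)=V^{1,0}$ is holomorphic and $\omega_\varepsilon$ is extremal on $Y_\varepsilon$ too; with the interior pieces and the regularity above, $\omega_\varepsilon$ is then a global extremal Poincar\'e type metric in $\Omega_\varepsilon$. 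Conversely, the description of $\mathrm{Ker}\,\Phi$ in Lemma \ref{modifiedkercokerlem} shows that no nonzero $\varrho(f)$, $f\in\mathfrak{h}_D$, can be written as $\mathcal{D}^*\mathcal{D}(\phi+\lambda t)$ with $\phi\in C^{4,\alpha}_\eta$ and $\lambda\in\mathbb{R}$, so $\omega_\varepsilon$ can be extremal on $Y_\varepsilon$ only if $\varepsilon^{2n-2}f_\Gamma+f_\varepsilon=0$ in $\mathfrak{h}_D$. To conclude, run the construction with the common vector field of $\mathfrak{t}$ taken to be (the lift of) the extremal vector field $\mathfrak{X}_\varepsilon$ of $\Omega_\varepsilon$, which by Lemma \ref{extvfistangent} and the remark following Conjecture \ref{ptconj} is a well-defined vector field on $X$ tangent to $D$ once $T$ is fixed. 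Using the definition \eqref{psif} together with Lemmas \ref{pullbackperturb} and \ref{psiperturblem}, the holomorphy potential on $D$ attached to $\varepsilon^{2n-2}f_\Gamma+f_\varepsilon$ is identified, under the natural normalisations, with $\mathfrak{X}_{\varepsilon|D}-\mathfrak{X}_D$; equivalently, since $\vext$ satisfies $\vext_{|D}=\mathfrak{X}_D$ for the original extremal metric, this follows by differentiating the family $\varepsilon\mapsto\mathfrak{X}_\varepsilon$. Hence \eqref{extremalvf}, i.e.\ $\mathfrak{X}_{\varepsilon|D}=\mathfrak{X}_D$, forces $\varepsilon^{2n-2}f_\Gamma+f_\varepsilon=0$, and $\omega_\varepsilon$ is extremal.

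The main obstacle is this last step, and within it the identification of the finite-dimensional obstruction $\varepsilon^{2n-2}f_\Gamma+f_\varepsilon\in\mathfrak{h}_D$ with the discrepancy $\mathfrak{X}_{\varepsilon|D}-\mathfrak{X}_D$: this is exactly the point at which the weighted cokernel analysis of Section \ref{analyseweights}, the construction of the functions $\psi_f$, and the notion of an extremal vector field on $D$ have all to be reconciled, and it is the only ingredient with no counterpart in the compact Arezzo--Pacard--Singer theorem. The gluing, Poincar\'e-type and cohomology-class arguments, by contrast, are essentially those of the compact case, the divisor entering only through the harmless observation that the cut-off $\chi$ defining $\varrho$ is supported away from the gluing region. (The solvability of the four matching equations, which precedes this proposition in the scheme of proof, is obtained as in \cite{arezzopacardsinger11} from the variation estimates \eqref{intvariationbnd} and \eqref{outvariationbnd} via a fixed-point argument.)
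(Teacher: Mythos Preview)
Your treatment of the gluing, the Poincar\'e type condition, and the cohomology class is essentially the paper's argument: refer to \cite{arezzopacardsinger11} for the $C^{3,\alpha}$ matching and bootstrap, and observe that $\chi$ (hence $\varrho$) is supported away from the annuli so the two pieces solve the same equation there. That part is fine.

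The gap is in the extremality step. You correctly isolate the obstruction as $\varepsilon^{2n-2}f_\Gamma+f_\varepsilon\in\mathfrak{h}_D$ and correctly see that the task is to show \eqref{extremalvf} forces it to vanish. But your proposed mechanism---``run the construction with the common vector field taken to be $\mathfrak{X}_\varepsilon$'' and then identify the obstruction with $\mathfrak{X}_{\varepsilon|D}-\mathfrak{X}_D$ via Lemmas \ref{pullbackperturb}, \ref{psiperturblem}, or by ``differentiating the family $\varepsilon\mapsto\mathfrak{X}_\varepsilon$''---is not justified. The extremal vector field $\mathfrak{X}_\varepsilon$ is defined as the projection of the scalar curvature onto $\overline{\mathfrak{h}}$, and for your argument you need to know \emph{a priori} that this projection is the same for every Poincar\'e type metric in $\Omega_\varepsilon$ of the kind produced by the construction (otherwise you cannot compare the obstruction arising from your particular $\omega_\varepsilon$ with the class invariant $\mathfrak{X}_\varepsilon$). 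You cite the remark after Conjecture \ref{ptconj} for this well-definedness, but that remark itself points forward to Lemma \ref{projectionchange}, which is exactly the missing ingredient and is part of the paper's proof of the proposition you are trying to prove; so the citation is circular. Lemmas \ref{pullbackperturb} and \ref{psiperturblem} describe the structure of $\varrho$ and its relation to vector fields on $D$, but they do not by themselves give the identification with $\mathfrak{X}_{\varepsilon|D}$. The ``differentiate in $\varepsilon$'' remark is only an \emph{a posteriori} observation in the paper (see the Remark after \eqref{gammadefn}), not an argument one can run before the metric is known to be extremal.

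What the paper actually does is prove Lemma \ref{projectionchange}: for any $\phi\in C^{4,\alpha}_\eta$, the vector field obtained by projecting $S(\omega_\phi)$ onto $\overline{\mathfrak{h}}$ is independent of $\phi$. The proof is a short computation showing the derivative of this projection in $\phi$ reduces to $\psi\mapsto -\int \mathcal{D}^*_\omega\mathcal{D}_\omega(\psi)\,f_j\,\omega^n$, which vanishes by Proposition \ref{kercokerlem}. With this in hand, the $\varrho(f)$-component of the projection is a class invariant; comparing with the approximate metric (whose associated vector field restricts to $\mathfrak{X}_D$ on $D$ by construction) and using \eqref{extremalvf} then forces $\varrho(f)=0$. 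Your outline recovers the right statement, but you need to supply this invariance lemma (or an equivalent argument) to close the loop.
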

Note that it is because of the requirement that the vector fields above are the same that we must insist that $h_{\Gamma} \in \overline{\mathfrak{t}}$ in equation \eqref{gammadefn}.

The proof that this indeed is sufficient is exactly as in \cite{arezzopacardsinger11}, with one extra point to take care of. Their argument shows that establishing the above allows us to construct a potential of class $\Omega_{\varepsilon}$ for a Poincar\'e type metric on $\BlX \setminus D$. We then need to show that this metric is in fact extremal. In the Arezzo-Pacard-Singer setting this is automatic, but we have the possibility that we used some of the additional cokernel elements coming from pulled back functions from the divisor, and so it may be that the metric constructed is not extremal in some region away from the blow-up points.

We remove this possibility by using our assumption \eqref{extremalvf}, together with the following Lemma.
\begin{lem}\label{projectionchange} Let $X$ be a compact complex manifold, $D \subseteq X$ a smooth divisor, and $\omega$ a metric of Poincar\'e type. Let $\mathfrak{X}$ be the vector field obtained by first projecting $S(\omega)$ to $\overline{\mathfrak{h}}$ and then taking the gradient. Then the vector field is unchanged if we replace $\omega$ by $\omega_{\phi} = \omega + \ddb \phi$ with $\phi \in C^{4,\alpha}_{\eta}.$
\end{lem}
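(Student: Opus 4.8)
The plan is to transfer the Futaki--Mabuchi argument \cite{futakimabuchi95} for the invariance of the extremal vector field within a K\"ahler class to the Poincar\'e type setting; the one genuinely new ingredient is the control of integration by parts near $D$, for which Lemma \ref{ptintbyparts} is tailor-made.

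\emph{Reformulation.} By Proposition \ref{kercokerlem}, applied with a weight in $(0,1)$, every element of $\overline{\mathfrak{h}_{//}^D}$ lies in $C^{4,\alpha}_{\eta}(X\setminus D)$ for all $\eta>0$, hence in $L^2(X\setminus D,\omega)$ by Lemma \ref{holderinsobolev}; and $S(\omega)\in L^2$ since it is bounded and $X\setminus D$ has finite volume. So the $L^2(\omega)$-orthogonal projection $\pi_\omega$ onto the finite-dimensional space $\overline{\mathfrak{h}_{//}^D}$ is well defined and $\mathfrak{X}=\mathfrak{X}(\omega)$ is the holomorphic vector field of which $\pi_\omega(S(\omega))$ is the $\omega$-potential. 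Since $\overline{\mathfrak{h}_{//}^D}$ contains the constants, on which the gradient vanishes, we may replace $S(\omega)$ by $S(\omega)-\underline{S}(\omega)$, with $\underline{S}(\omega)$ the mean value of $S(\omega)$ with respect to $\omega^n$, and normalise all holomorphy potentials to have zero $\omega$-mean; then $\mathfrak{X}(\omega)$ is the unique $V\in\mathfrak{h}_{//}^D$ with
\[
\mathcal{B}_\omega(V,W)=\mathcal{F}_\omega(W)\quad\text{for all }W\in\mathfrak{h}_{//}^D,
\]
where $\mathcal{B}_\omega(V,W)=\int_{X\setminus D}\theta_V^\omega\theta_W^\omega\,\omega^n$ and $\mathcal{F}_\omega(W)=\int_{X\setminus D}\bigl(S(\omega)-\underline{S}(\omega)\bigr)\theta_W^\omega\,\omega^n$, the form $\mathcal{B}_\omega$ being positive definite with this normalisation. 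It therefore suffices to show that $\mathcal{B}_\omega$ and $\mathcal{F}_\omega$ do not change when $\omega$ is replaced by $\omega_\phi=\omega+\ddb\phi$ with $\phi\in C^{4,\alpha}_{\eta}$ (with $\omega_\phi$ positive, as it is for $\phi$ small, hence in the situations where the lemma is applied).

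\emph{Differentiation along the straight-line path.} For $s\in[0,1]$ set $\omega_s=\omega+s\,\ddb\phi=(1-s)\omega+s\,\omega_\phi$, a convex combination of Poincar\'e type metrics in the class, and hence again one. It is enough to prove $\frac{d}{ds}\mathcal{B}_{\omega_s}(V,W)=0$ and $\frac{d}{ds}\mathcal{F}_{\omega_s}(W)=0$ for $V,W\in\mathfrak{h}_{//}^D$ fixed. Substituting the variation formulae $\frac{d}{ds}S(\omega_s)=-\mathcal{D}^*_{\omega_s}\mathcal{D}_{\omega_s}\phi+\frac{1}{2}\langle\nabla S(\omega_s),\nabla\phi\rangle$ (as in Section \ref{nonlinearsect}), $\frac{d}{ds}\omega_s^n=(\Delta_{\omega_s}\phi)\,\omega_s^n$, and $\frac{d}{ds}\theta_V^{\omega_s}=d\phi(V)+c_V(s)$ (cf.\ \cite[Prop. 1.2]{auvray14}), where $c_V(s)$ is a constant forced by the normalisation, one sees that the constants drop out: a constant paired with $\theta_W^{\omega_s}$ integrates to zero by the normalisation, and a constant paired with $S(\omega_s)-\underline{S}(\omega_s)$ integrates to zero by definition of $\underline{S}$. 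The term carrying $\mathcal{D}^*_{\omega_s}\mathcal{D}_{\omega_s}\phi$ is paired with the holomorphy potential $\theta_W^{\omega_s}$, for which $\mathcal{D}_{\omega_s}\theta_W^{\omega_s}=0$; Lemma \ref{ptintbyparts} applies, since $\phi$ and $\theta_W^{\omega_s}$ have weights summing to less than $1$, and the term vanishes. What remains is exactly the combination of integrals that cancels in the compact calculation of \cite{futakimabuchi95}, each cancellation there being an integration by parts together with the holomorphy of the relevant vector field; in our situation each of these integrations by parts is legitimate by Lemma \ref{ptintbyparts} and its elementary analogue for the Laplacian, the crucial point being that the Poincar\'e type volume form is mutually bounded with $e^{-t}\,dt\wedge d\theta\wedge\omega_D^{n-1}$, so that the collar boundary terms over $\{t\in[a,a+1]\}$ tend to $0$ as $a\to\infty$.

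\emph{Main obstacle.} The substance of the proof is precisely this last point: differentiating $\mathcal{B}$ and $\mathcal{F}$ produces a handful of integrals with up to three derivatives falling on $\phi$ and various derivatives of $\theta_V^{\omega_s}$, $\theta_W^{\omega_s}$ and $S(\omega_s)$, and for each one must check that the cutoff-and-limit scheme of Lemma \ref{ptintbyparts} applies. The decay one needs is available, since $\overline{\mathfrak{h}_{//}^D}\subseteq\bigcap_{\eta'>0}C^{4,\alpha}_{\eta'}$ by Proposition \ref{kercokerlem}, $S(\omega)$ has weight $0$, and $\phi$ has weight at most $0$ (for otherwise $\omega_\phi$ would not be of Poincar\'e type in $\Omega$), so every integrand is a product of factors whose weights sum to less than $1$. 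Equivalently, one may argue directly that for each fixed $W$ the function $s\mapsto\int_{X\setminus D}\bigl(S(\omega_s)-\theta_{\mathfrak{X}(\omega)}^{\omega_s}\bigr)\theta_W^{\omega_s}\,\omega_s^n$, where $\theta_{\mathfrak{X}(\omega)}^{\omega_s}$ is the $\omega_s$-potential of the \emph{fixed} vector field $\mathfrak{X}(\omega)$, vanishes at $s=0$ and has vanishing derivative, whence $\pi_{\omega_s}(S(\omega_s))=\theta_{\mathfrak{X}(\omega)}^{\omega_s}$, i.e.\ $\mathfrak{X}(\omega_s)=\mathfrak{X}(\omega)$, for all $s$; here one additionally uses that $S(\omega)-\pi_\omega(S(\omega))\in\textnormal{Im}\,(\mathcal{D}^*_\omega\mathcal{D}_\omega|_{C^{4,\alpha}_{\eta'}})$ for every $\eta'\in(0,1)$, again from Proposition \ref{kercokerlem}. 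Once these integrations by parts are justified, the invariance of $\mathcal{B}_\omega$ and $\mathcal{F}_\omega$ follows, and with it the lemma.
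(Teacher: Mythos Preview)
Your proof is correct and rests on the same idea as the paper's: differentiate along the straight-line path $\omega_s=\omega+s\,\ddb\phi$, use the variation formula $\frac{d}{ds}S(\omega_s)=-\mathcal{D}^*_{\omega_s}\mathcal{D}_{\omega_s}\phi+\tfrac12\langle\nabla S(\omega_s),\nabla\phi\rangle$, and justify the integrations by parts near $D$ via Lemma~\ref{ptintbyparts}. The organisation differs, however. You follow the classical Futaki--Mabuchi scheme, separately proving the invariance of the bilinear form $\mathcal{B}_{\omega}(V,W)=\int\theta_V^\omega\theta_W^\omega\,\omega^n$ and of the Futaki-type functional $\mathcal{F}_\omega(W)$, and then appealing to the known compact cancellations. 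The paper instead differentiates the pairing $\phi\mapsto\int S(\omega_\phi)f_j^{\phi}\,\omega_\phi^n$ directly and observes that, after integrating the $\Delta\psi$ term by parts and using $L=-\mathcal{D}^*\mathcal{D}+\tfrac12\langle\nabla S,\nabla\cdot\rangle$, the three terms collapse in one stroke to $-\int \mathcal{D}^*_\omega\mathcal{D}_\omega(\psi)\,f_j\,\omega^n$, which vanishes by the image characterisation in Proposition~\ref{kercokerlem}. The paper's route is shorter and avoids invoking the compact computation as a black box; your route is more systematic and has the merit of making explicit the invariance of $\mathcal{B}_\omega$, which the paper's formulation of the projection tacitly uses but does not spell out.
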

The above Lemma implies that the assumption \eqref{extremalvf} on the extremal vector field of the class $\Omega_{\varepsilon}$ ensures that projection of the scalar curvature of any metric of the type we construct in $\Omega_{\varepsilon}$ has 0 component coming from the pulled back functions from $D$, i.e. the component $\varrho(f)$ is actually $0$. This is because the approximate metric built from the Burns-Simanca metric has associated vector field which restricts to the extremal vector field of $D$ on $D$, and if two vector fields in $\mathfrak{h}$ have the same restriction to $D$ and same projection to $\mathfrak{h}_{//}^D$, then they are equal. Therefore the metrics constructed by the Cauchy matching technique have scalar curvature that actually lies in $\overline{\mathfrak{t}}$, and so the metrics are extremal.

We now prove the Lemma.
\begin{proof} Let $f_1, \cdots, f_k$ be a basis for $\overline{\mathfrak{h}}_{\omega}$. Then a basis for $\overline{\mathfrak{h}}_{\omega_{\phi}}$ is given by replacing $f_j$ by the function $$f_j^{\phi} = f_j + \frac{1}{2} \langle \nabla f_j , \nabla \phi \rangle.$$ The projection map is therefore \begin{align}\label{projectionop} \phi \mapsto \sum_{j=1}^k \big( \int_X S(\omega_{\phi}) f_j^{\phi} \omega_{\phi}^n \big) f_j^{\phi}.\end{align}

We think of this as a map into $\mathbb{R}^k$ using our chosen $\phi$-dependent bases. It suffices to show that the derivative of this map at any point is $0$. 

Let $L$ denote the derivative of the scalar curvature operator at $\phi = 0$. Recall also that the derivative of the operator $\phi \mapsto \omega_{\phi}^n$ at $\phi =0$ is $\psi \mapsto \Delta (\psi) \omega^n$, where $\Delta$ is the Laplace operator of $\omega$. The derivative at $\phi=0$ of the $j^{\textnormal{th}}$ component of the map \eqref{projectionop} is therefore the map $C^{4,\alpha}_{\eta} (X \setminus D) \rightarrow \mathbb{R}$ given by
\begin{align*} \psi \mapsto & \int_X L(\psi) f_j \omega^n + \frac{1}{2}  \int_X S(\omega)\langle \nabla f_j , \nabla \psi \rangle \omega^n + \int_X S(\omega) f_j \Delta(\psi) \omega^n .
\end{align*}
Integrating the term involving the Laplacian by parts and using that $L (\psi ) = - \mathcal{D}_{\omega}^* \mathcal{D}_{\omega} (\psi) + \frac{1}{2} \langle \nabla S(\omega), \nabla \psi \rangle$ we therefore get that the derivative is simply
\begin{align*} \psi \mapsto & - \int_X \mathcal{D}_{\omega}^* \mathcal{D}_{\omega} (\psi) f_j \omega^n .\end{align*}
But from Proposition \ref{kercokerlem} we know that the image of $\mathcal{D}_{\omega}^* \mathcal{D}_{\omega} $ on $C^{4,\alpha}_{\eta}$ is the $L^2$-orthogonal complement to $\overline{\mathfrak{h}}$. So since $f_j \in \overline{\mathfrak{h}}$, the derivative map is just $0$. Calculating the derivative of the operator at any other $\phi$ is the same, just replacing $\omega$ by $\omega_{\phi}$ above. Thus the derivative is $0$ at any $\phi$ and the Lemma is proved.
\end{proof}

The argument to show that we can actually find functions satisfying Proposition \ref{matchingprop} is exactly as in \cite[pp.39-41]{arezzopacardsinger11}. By letting the divisor volume factors $a_j$ vary for the blown-up regions we glue in, we recover the degrees freedom lost by the conditions on the boundary data $\upsilon_j, \varsigma_j$. By expanding using the low order approximations \eqref{Vepsdefn} and \eqref{Wepsdefn}, it can be shown that the matching can be achieved. This step hinges on \cite[Lemma 7.0.2]{arezzopacardsinger11}, an isomorphism result for a map between boundary data. Since this does not see the Poincar\'e type behaviour, we omit the details and refer to \cite{arezzopacardsinger11}. 

Note that this step makes us lose control of the K\"ahler class in general. However, under the assumption that any vector field in $\mathfrak{h}$ vanishing at all the points $p_i$ necessarily is in $\mathfrak{t}$, we regain this control, i.e. we can enter the K\"ahler cone in a straight line with the metrics produced, as in the compact case.

\section{Examples}\label{egsect}

In this section, we give three contexts in which we get new examples of extremal Poincar\'e type metrics using the main theorem. 

\subsection{Blowing up K\"ahler-Einstein metrics} In \cite{chengyau80}, \cite{rkobayashi84} and \cite{tianyau87} K\"ahler-Einstein Poincar\'e type metrics were constructed in the situation when $D$ is a divisor such that $K_X - D$ is ample. In this case, $X$ has no holomorphic vector fields tangent to $D$. Morever, by the adjunction formula, $K_D$ is ample in this case, and so $D$ has no holomorphic vector fields.  Therefore there are no obstructions to applying Theorem \ref{mainblthm}, and so we can blow up any finite collection of points on such manifolds, in any direction into the K\"ahler cone. The resulting metrics are then constant scalar curvature Poincar\'e type metrics. 

%\subsection{Blowing up Sz\'ekelyhidi's extremal metrics}

\subsection{Blowing up extremal toric metrics}

Another situation where Theorem \ref{mainblthm} applies is the case of toric manifolds. For these manifolds, the conditions of the theorem simplify and we begin by describing this simplification, which could be computed easily in any given example. We then give some particular cases of toric manifolds where the results apply.

Recall that a toric manifold $X$ with a K\"ahler class $\Omega$ is determined by a moment polytope $P$. Moreover, if $D$ is torus-invariant it corresponds to a facet of $F$ of the polytope. For such manifolds, all the assumptions apart from \eqref{extremalvf} in Theorem \ref{mainblthm} become redundant, by taking $T$ to be a maximal torus, which we can do provided the points we blow up are fixed points of the torus action. Thus in the compact case, one can always produce extremal metrics on the blow-up in any direction into the K\"ahler cone for such manifolds, provided one blows up fixed points of the torus action. Due to the condition \eqref{extremalvf}, this is not always sufficient in our case.

In terms of the moment polytope $P_{\varepsilon}$ of the blow-up, the condition \eqref{extremalvf} on the extremal vector field becomes that the associated affine linear function $A_{\varepsilon}$ associated to the pair $(P_{\varepsilon},F)$ differs along $F$ from the extremal affine function $A_F$ of $F$ by a constant. Here the associated affine linear function of a pair $(Q,E)$ is the unique affine linear function $A$ such that $$f \mapsto \int_{\partial Q \setminus  F} f d\sigma -  \int_{Q} f A d \lambda$$ vanishes on all affine linear function, see \cite{Don02} for details.

We begin with the case of the Abreu/Bryant metric  of \cite{abreu01} and \cite{bryant01} on $\mathbb{P}^n \setminus \mathbb{P}^{n-1}$, say with class $c_1 \big( \mathcal{O}(1) \big)$. This is an extremal Poincar\'e type metric which is not of constant scalar curvature. 

The moment polytope $P$ of $\mathbb{P}^n$ with this class is the standard simplex $$P = \{ x : x_i \geq 0, x_1 + \cdots +x_n \leq 1 \} \subseteq \mathbb{R}^n$$ and the facet $F$ corresponding to the divisor $\mathbb{P}^{n-1}$ is the boundary component $$F = \{ x  \in P : x_1 + \cdots + x_n =1 \}.$$ There is only one fixed point not on the divisor, and this corresponds to the origin in the moment polytope. Letting $y= x_1 + \cdots + x_n$, we have that the associated affine linear function $A_{(P,F)}$ is of the form $$ A_{(P,F)} = a + b y.$$

The $\varepsilon$-blow-up of $\mathbb{P}^n$ in this point has moment polytope $$P_{\varepsilon} = \{ x : x_i \geq 0, \varepsilon \leq  x_1 + \cdots +x_n \leq 1 \} .$$ Since this polytope keeps the symmetry of the function $y$, the associated affine linear function $A_{\varepsilon}$ of $(P_{\varepsilon},F)$ is of the form $$ A_{\varepsilon} = a_{\varepsilon} + b_{\varepsilon} y.$$ In particular, the restriction to $F$ is a constant, and so Theorem \ref{mainblthm} applies. In other words, there is a $\varepsilon_0 >0$ such that there is an extremal Poincar\'e type metric on the $\varepsilon$-blow-up for all $\varepsilon \in (0,\varepsilon_0).$

\begin{rem} In \cite{apostolovauvraysektnan17}, it was actually shown for the case of surfaces that $\varepsilon_0$ is as large as it can be, i.e. the Seshadri constant of the blow-up point.
\end{rem}

We can now try to blow up these again. When doing so, there is now more than one point to choose from. Choosing a single point will destroy the symmetry in the function $y$, and so the associated affine linear function will no longer be constant along $F$. However, the symmetry is kept if we blow up in all the new fixed points, with the $\varepsilon$-dependent K\"ahler class having equal volumes for all the new exceptional divisors. So for this choice of blow-up points and divisor volumes, we can apply Theorem \ref{mainblthm}.

We could continue doing this inductively, all the time blowing up all new fixed points with equal volume for all the exceptional divisors of the blow-up. Thus Theorem \ref{mainblthm} can be applied to \textit{some} successive blow-ups of the Abreu/Bryant extremal Poincar\'e type manifold. In particular, we get an infinite family of different complex manifolds (of different topological type), all admitting extremal Poincar\'e type metrics.

\bibliography{biblibrary}

\begin{thebibliography}{Auv14b}

\bibitem[AAS17]{apostolovauvraysektnan17}
Vestislav Apostolov, Hugues Auvray, and Lars~Martin Sektnan.
\newblock Extremal {K\"ahler Poincar\'e} type metrics on toric varieties.
\newblock 2017.
\newblock {arXiv:1711.08424}.

\bibitem[Abr01]{abreu01}
Miguel Abreu.
\newblock K\"ahler metrics on toric orbifolds.
\newblock {\em J. Differential Geom.}, 58(1):151--187, 2001.

\bibitem[AN63]{agmonnirenberg63}
S.~Agmon and L.~Nirenberg.
\newblock Properties of solutions of ordinary differential equations in
  {B}anach space.
\newblock {\em Comm. Pure Appl. Math.}, 16:121--239, 1963.

\bibitem[AP06]{arezzopacard06}
Claudio Arezzo and Frank Pacard.
\newblock Blowing up and desingularizing constant scalar curvature {K}\"ahler
  manifolds.
\newblock {\em Acta Math.}, 196(2):179--228, 2006.

\bibitem[AP09]{arezzopacard09}
Claudio Arezzo and Frank Pacard.
\newblock Blowing up {K}\"ahler manifolds with constant scalar curvature. {II}.
\newblock {\em Ann. of Math. (2)}, 170(2):685--738, 2009.

\bibitem[APS11]{arezzopacardsinger11}
Claudio Arezzo, Frank Pacard, and Michael Singer.
\newblock Extremal metrics on blowups.
\newblock {\em Duke Math. J.}, 157(1):1--51, 2011.

\bibitem[Auv13]{auvray13}
Hugues Auvray.
\newblock Metrics of {P}oincar\'e type with constant scalar curvature: a
  topological constraint.
\newblock {\em J. Lond. Math. Soc. (2)}, 87(2):607--621, 2013.

\bibitem[Auv14a]{auvray14b}
Hugues Auvray.
\newblock Asymptotic properties of extremal {K\"a}hler metrics of {Poincar\'e}
  type.
\newblock 2014.
\newblock {arXiv:1401.0123, to appear in {\textit{Proceedings of the London
  Mathematical Society}}}.

\bibitem[Auv14b]{auvray14}
Hugues Auvray.
\newblock Note on {Poincar\'e} type {K\"a}hler metrics and {F}utaki characters.
\newblock 2014.
\newblock {arXiv:1401.0128, to appear in {\textit{Annales de L'Institut
  Fourier}}}.

\bibitem[Auv17]{auvray17}
Hugues Auvray.
\newblock The space of {P}oincar\'e type {K}\"ahler metrics on the complement
  of a divisor.
\newblock {\em J. Reine Angew. Math.}, 722:1--64, 2017.

\bibitem[Biq97]{biquard97}
Olivier Biquard.
\newblock Fibr\'es de {H}iggs et connexions int\'egrables: le cas logarithmique
  (diviseur lisse).
\newblock {\em Ann. Sci. \'Ecole Norm. Sup. (4)}, 30(1):41--96, 1997.

\bibitem[Bry01]{bryant01}
Robert~L. Bryant.
\newblock Bochner-{K}\"{a}hler metrics.
\newblock {\em J. Amer. Math. Soc.}, 14(3):623--715, 2001.

\bibitem[CG72]{carlsongriffiths72}
James Carlson and Phillip Griffiths.
\newblock A defect relation for equidimensional holomorphic mappings between
  algebraic varieties.
\newblock {\em Ann. of Math. (2)}, 95:557--584, 1972.

\bibitem[CY80]{chengyau80}
Shiu~Yuen Cheng and Shing-Tung Yau.
\newblock On the existence of a complete {K}\"ahler metric on noncompact
  complex manifolds and the regularity of {F}efferman's equation.
\newblock {\em Comm. Pure Appl. Math.}, 33(4):507--544, 1980.

\bibitem[Don02]{Don02}
Simon~K. Donaldson.
\newblock Scalar curvature and stability of toric varieties.
\newblock {\em J. Differential Geom.}, 62(2):289--349, 2002.

\bibitem[Don05]{donaldson05}
Simon~K. Donaldson.
\newblock Lower bounds on the {C}alabi functional.
\newblock {\em J. Differential Geom.}, 70(3):453--472, 2005.

\bibitem[Don09]{donaldson09}
Simon~K. Donaldson.
\newblock Constant scalar curvature metrics on toric surfaces.
\newblock {\em Geom. Funct. Anal.}, 19(1):83--136, 2009.

\bibitem[FM95]{futakimabuchi95}
Akito Futaki and Toshiki Mabuchi.
\newblock Bilinear forms and extremal {K}\"{a}hler vector fields associated
  with {K}\"{a}hler classes.
\newblock {\em Math. Ann.}, 301(2):199--210, 1995.

\bibitem[Kob84]{rkobayashi84}
Ryoichi Kobayashi.
\newblock K\"ahler-{E}instein metric on an open algebraic manifold.
\newblock {\em Osaka J. Math.}, 21(2):399--418, 1984.

\bibitem[LM85]{lockhartmcowen85}
Robert~B. Lockhart and Robert~C. McOwen.
\newblock Elliptic differential operators on noncompact manifolds.
\newblock {\em Ann. Scuola Norm. Sup. Pisa Cl. Sci. (4)}, 12(3):409--447, 1985.

\bibitem[Pac08]{pacardnotes}
Frank Pacard.
\newblock Connected sum constructions in geometry and nonlinear analysis.
\newblock {Lectures notes from 2006 and 2007 courses at Roma I-La Sapienza and
  ETH}, 2008.

\bibitem[RZ12]{rochonzhang12}
Fr\'{e}d\'{e}ric Rochon and Zhou Zhang.
\newblock Asymptotics of complete {K}\"{a}hler metrics of finite volume on
  quasiprojective manifolds.
\newblock {\em Adv. Math.}, 231(5):2892--2952, 2012.

\bibitem[Sch98]{schumacher98}
Georg Schumacher.
\newblock Asymptotics of {K}\"{a}hler-{E}instein metrics on quasi-projective
  manifolds and an extension theorem on holomorphic maps.
\newblock {\em Math. Ann.}, 311(4):631--645, 1998.

\bibitem[Sek16]{sektnanthesis}
Lars~Martin Sektnan.
\newblock {\em {Poincar\'e type K\"ahler metrics and stability on toric
  varieties}}.
\newblock PhD thesis, Imperial {C}ollege {L}ondon, 2016.

\bibitem[Sz{\'e}06]{Szethesis}
G{\'a}bor Sz{\'e}kelyhidi.
\newblock {\em {E}xtremal {K\"ahler} {M}etrics and {K}-stability}.
\newblock PhD thesis, Imperial {C}ollege {L}ondon, 2006.

\bibitem[Sz{\'e}08]{szekelyhidi08}
G{\'a}bor Sz{\'e}kelyhidi.
\newblock Optimal test-configurations for toric varieties.
\newblock {\em J. Differential Geom.}, 80(3):501--523, 2008.

\bibitem[Sz{\'e}12]{szekelyhidi12}
G{\'a}bor Sz{\'e}kelyhidi.
\newblock On blowing up extremal {K}\"ahler manifolds.
\newblock {\em Duke Math. J.}, 161(8):1411--1453, 2012.

\bibitem[Sz{\'e}14]{szekelyhidi14book}
G{\'a}bor Sz{\'e}kelyhidi.
\newblock {\em An introduction to extremal {K\"a}hler metrics}, volume 152 of
  {\em Graduate Studies in Mathematics}.
\newblock American Mathematical Society, Providence, RI, 2014.

\bibitem[Sz{\'e}15]{szekelyhidi15b}
G{\'a}bor Sz{\'e}kelyhidi.
\newblock Blowing up extremal {K}\"ahler manifolds {II}.
\newblock {\em Invent. Math.}, 200(3):925--977, 2015.

\bibitem[TY87]{tianyau87}
Gang Tian and Shing-Tung Yau.
\newblock Existence of {K}\"ahler-{E}instein metrics on complete {K}\"ahler
  manifolds and their applications to algebraic geometry.
\newblock In {\em Mathematical aspects of string theory ({S}an {D}iego,
  {C}alif., 1986)}, volume~1 of {\em Adv. Ser. Math. Phys.}, pages 574--628.
  World Sci. Publishing, Singapore, 1987.

\end{thebibliography}
\bibliographystyle{alpha}

\end{document}